\newcommand{\kk}{\mathbb{K}}
\newcommand{\m}{\mathbf{m}}
\newcommand{\D}{\Delta}
\newcommand{\cJ}{\mathcal{J}}
\newcommand{\cR}{\mathcal{R}}
\newcommand{\Z}{\mathbb{Z}}
\newcommand{\A}{\mathcal{A}}
\DeclareMathOperator{\syz}{syz}
\newtheorem{thm}{Theorem}[section]
\newtheorem{cor}[thm]{Corollary}
\newtheorem{lem}[thm]{Lemma}
\newtheorem{prop}[thm]{Proposition}
\theoremstyle{definition}
\newtheorem{defn}[thm]{Definition}
\newtheorem{exm}[thm]{Example}
\newtheorem{notation}[thm]{Notation}
\newtheorem{assumptions}[thm]{Assumptions}
\theoremstyle{remark}
\newtheorem{remark}[thm]{Remark}
\title{Free and non-free multiplicities on the $A_3$ arrangement}
\author[M.~DiPasquale]{Michael DiPasquale}     
\address{Michael DiPasquale\\     
	Department of Mathematics\\     
	Oklahoma State University\\     
	Stillwater\\
	OK \ 74078-1058\\     
	USA}     
\email{mdipasq@okstate.edu}
\urladdr{\url{http://math.okstate.edu/people/mdipasq/}}   
\author[C.A.~Francisco]{Christopher A. Francisco}
\address{Christopher A. Francisco\\
	Department of Mathematics\\     
	Oklahoma State University\\     
	Stillwater\\
	OK \ 74078-1058\\     
	USA}    
\email{chris.francisco@okstate.edu}
\urladdr{\url{https://math.okstate.edu/people/chris/}}
\author[J.~Mermin]{Jeffrey Mermin}
\address{Jeffrey Mermin\\     
	Department of Mathematics\\     
	Oklahoma State University\\     
	Stillwater\\
	OK \ 74078-1058\\     
	USA}
\email{mermin@math.okstate.edu}     
\urladdr{\url{https://math.okstate.edu/people/mermin/}}   
\author[J.~Schweig]{Jay Schweig}
\address{Jay Schweig\\
	Department of Mathematics\\     
	Oklahoma State University\\     
	Stillwater\\
	OK \ 74078-1058\\     
	USA}
\email{jay.schweig@okstate.edu}
\urladdr{\url{https://math.okstate.edu/people/jayjs/}}
\thanks{}
\thanks{}
\begin{document}
	
\begin{abstract}
We give a complete classification of free and non-free multiplicities on the $A_3$ braid arrangement.  Namely, we show that all free multiplicities on $A_3$ fall into two families that have been identified by Abe-Terao-Wakefield (2007) and Abe-Nuida-Numata (2009).  The main tool is a new homological obstruction to freeness derived via a connection to multivariate spline theory.
\end{abstract}

\maketitle

\section{Introduction}

Let $V=\kk^{\ell}$ be a vector space over a field $\kk$ of characteristic zero.  A \textit{central hyperplane arrangement} $\A=\{H_1,\ldots,H_n\}$ is a set of hyperplanes $H_i\subset V$ passing through the origin in $V$.  In other words, if we let $\{x_1,\ldots,x_{\ell}\}$ be a basis for the dual space $V^*$ and $S=\mbox{Sym}(V^*)\cong \kk[x_1,\ldots,x_l]$, then $H_i=V(\alpha_{H_i})$ for some choice of linear form $\alpha_{H_i}\in V^*$, unique up to scaling.  A \textit{multi-arrangement} is a pair $(\A,\m)$ of a central arrangement $\A$ and a map $\m:\A\rightarrow Z_{\ge 0}$, called a multiplicity.  If $\m\equiv 1$, then $(\A,\m)$ is denoted $\A$ and is called a \textit{simple} arrangement.

The \textit{module of derivations} on $S$ is defined by $\mbox{Der}_{\kk}(S)=\bigoplus_{i=1}^{\ell} S\partial_{x_i}$, the free $S$-module with basis $\partial_{x_i}=\partial/\partial x_i$ for $i=1,\ldots,\ell$.  The module $\mbox{Der}_\kk(S)$ acts on $S$ by partial differentiation.  Our main object of study is the module $D(\A,\m)$ of \textit{logarithmic derivations} of $(\A,\m)$:
\[
D(\A,\m):=\{\theta\in \mbox{Der}_\kk(S): \theta(\alpha_H)\in\langle \alpha_H^{\m(H)}\rangle \text{ for all }H\in\A \},
\]
where $\langle \alpha_H^{\m(H)} \rangle\subset S$ is the ideal generated by $\alpha_H^{\m(H)}$.  If $D(\A,\m)$ is a free $S$-module, then we say $(\A,\m)$ is free or $\m$ is a free multiplicity of the simple arrangement $\A$.  For a simple arrangement, $D(\A,\m)$ is denoted $D(\A)$; if $D(\A)$ is free we say $\A$ is free.

The module of logarithmic derivations is central to the theory of hyperplane arrangements, initiated and studied by Saito in~\cite{SaitoUniformization,SaitoLogDiff}.  In particular, it is important to know when $\A$ is a free arrangement. Indeed, possibly the most important open question in hyperplane arrangements is whether freeness is a combinatorial property; see, for instance, \cite{OrlikTerao}. Yoshinaga~\cite{YoshCharacterizationFreeArr} has shown that freeness of an arrangement is closely related to freeness of the canonical restricted multi-arrangement defined by Ziegler~\cite{ZieglerMulti}.  Hence the freeness of multiarrangements is important to the theory of hyperplane arrangements as well. 

The \textit{braid arrangement of type} $\A_{\ell}$ is defined as $\{H_{ij}=V(x_i-x_j): 0\le i<j\le \ell\}$ in $V\cong \kk^{\ell+1}$.  Free multiplicities on braid arrangements have been studied in~\cite{TeraoMultiDer,TeraoDoubleCoxeter,AbeQuasiConstant,YoshinagaPrimitiveDerivationMultiCoxeter,AbeSignedEliminable}.  Until recently there have been very few tools to study multi-arrangements.  In two papers~\cite{TeraoCharPoly,EulerMult}, Abe-Terao-Wakefield extend the theory of the characteristic polynomial and deletion-restriction arguments to multi-arrangements.  These allow new methods for determining the freeness (and non-freeness) of multiarrangements.  In particular, the tool of local and global mixed products is introduced for characterizing non-freeness of multi-arrangements in some instances.  Abe~\cite{AbeDeletedA3} uses these tools to give the first non-trivial complete classification of free and non-free multiplicities on a hyperplane arrangement, the so-called deleted $A_3$ arrangement.  The main result of this paper is the next natural step; namely a complete characterization of free and non-free multiplicities on the $A_3$ braid arrangement.

There are two main classes of multiplicities that have been characterized as free on the $A_3$ braid arrangement.  The first class may be described as follows.  Suppose that, for some index $i$, the inequalities $\m(H_{jk})\ge \m(H_{ij})+\m(H_{ik})-1$ are satisfied for every pair of distinct indices $j\neq i,k\neq i$ (geometrically, three hyperplanes which intersect in codimension two have relatively high multiplicity compared to the other three hyperplanes).  If these inequalities are satisfied, we say that the index $i$ is a \textit{free vertex} for $\m$.  If $\m$ has a free vertex, then it is known that $\m$ is a free multiplicity~\cite[Corollary~5.12]{EulerMult} (see also Corollary~\ref{cor:FirstFreeCondition}).  To describe the second (much more complex) class of free multiplicities, take four non-negative integers $n_0,n_1,n_2,$ and $n_3$ and consider the multiplicity $\m(H_{ij})=n_i+n_j+\epsilon_{ij}$, where $\epsilon_{ij}\in\{-1,0,1\}$.  We call these ANN multiplicities, due to a classification of all such multiplicities as free or non-free by Abe, Nuida, and Numata in~\cite{AbeSignedEliminable}.  It turns out the multiplicity $\m(H_{ij})=n_i+n_j$ is always free, and the classification of all ANN multiplicities depends on measuring the deviation from these using \textit{signed-eliminable graphs}.  We describe this classification in more detail in Section~\ref{sec:ANN}.  Our main result is that all free multiplicities on $A_3$ fall into these two classes.

\begin{thm}\label{thm:intro}
The multi-braid arrangement $(A_3,\m)$ is free if and only if $\m$ has a free vertex or $\m$ is a free ANN multiplicity.
\end{thm}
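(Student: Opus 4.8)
The direction ($\Leftarrow$) requires no new argument: if $\m$ has a free vertex then $(A_3,\m)$ is free by \cite[Corollary~5.12]{EulerMult}, and if $\m$ is a free ANN multiplicity then $(A_3,\m)$ is free by the Abe--Nuida--Numata classification \cite{AbeSignedEliminable}. So the plan is to prove the forward implication: \emph{if $(A_3,\m)$ is free, then $\m$ has a free vertex or $\m$ is a free ANN multiplicity.} I would argue the contrapositive. First I would fix the combinatorics of $A_3$: up to essentialization (modding out by the common line $x_0=x_1=x_2=x_3$) it is a rank-three arrangement of six planes through the origin of $\kk^3$, whose rank-two flats come in two types --- four ``triple points'', one for each $3$-subset $\{i,j,k\}$ of $\{0,1,2,3\}$, where $H_{ij},H_{ik},H_{jk}$ meet, and three ``double points'', one for each perfect matching of $\{0,1,2,3\}$, e.g.\ $H_{01}\cap H_{23}$. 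Each localization is a rank-two multi-arrangement, hence free, with exponents given by the standard explicit formula in the two or three multiplicities involved; these local exponents are the raw data of the argument.

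The core of the proof is a homological obstruction to freeness, obtained through the dictionary between logarithmic derivation modules of rank-three arrangements and modules of multivariate splines. Under this dictionary $D(A_3,\m)$ is computed by the top homology of a chain complex of Billera--Schenck--Stillman type whose lower homology $H_1(\m)$ is a finite-length $S$-module, and freeness of $D(A_3,\m)$ forces $H_1(\m)=0$. I would compute $H_1(\m)$ explicitly from the local exponents at the four triple points and three double points; the vanishing $H_1(\m)=0$ then translates into a system of linear inequalities among the six integers $\m(H_{ij})$ --- roughly, that the three ``matching sums'' $\m(H_{01})+\m(H_{23})$, $\m(H_{02})+\m(H_{13})$, $\m(H_{03})+\m(H_{12})$ lie within a bounded range of one another, together with finer sign conditions. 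This is the new tool, and it is what excludes the multiplicities belonging to neither family.

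Next I would reconcile this numerical picture with the two known families. Assume $(A_3,\m)$ is free and has no free vertex. The no-free-vertex hypothesis says that at each index $i$ at least one inequality $\m(H_{jk})\ge\m(H_{ij})+\m(H_{ik})-1$ fails; combined with the inequalities coming from $H_1(\m)=0$, this should pin the six multiplicities down enough to produce non-negative integers $n_0,n_1,n_2,n_3$ and signs $\epsilon_{ij}\in\{-1,0,1\}$ with $\m(H_{ij})=n_i+n_j+\epsilon_{ij}$, so that $\m$ is an ANN multiplicity. Finally, the signed graph on $\{0,1,2,3\}$ recording the $\epsilon_{ij}$ must be signed-eliminable: otherwise the non-freeness half of the Abe--Nuida--Numata classification (or, alternatively, a second application of the obstruction) would contradict freeness. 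Hence $\m$ is a \emph{free} ANN multiplicity, as required.

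I expect the main obstacle to be twofold. The first difficulty is making the homological obstruction precise and sharp: the spline complex has to be analyzed finely enough to extract not merely the generic vanishing of $H_1(\m)$ but its exact defining inequalities in terms of the local exponents, and the three double points --- where the dual line configuration degenerates away from general position --- must be incorporated with care. The second difficulty is the passage, once those inequalities are available, to the normal form $\m(H_{ij})=n_i+n_j+\epsilon_{ij}$ and the check that the associated signed graph is signed-eliminable: this is a delicate finite case analysis in six integer parameters, complicated by the fact that ANN-freeness is itself governed by a nontrivial combinatorial condition, and arranging the analysis so that it demonstrably accounts for every multiplicity outside the free-vertex family is where the bulk of the work will lie.
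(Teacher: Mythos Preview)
Your proposal is correct and follows essentially the same approach as the paper: the homological obstruction you describe is Theorem~\ref{thm:FreeEquiv}, which recasts freeness as the statement that the first syzygies of $J(0123)$ are generated by those of the four triangle ideals $J(ijk)$ (so your $H_1(\m)$ is the cokernel of this inclusion). Two small corrections to your sketch are worth recording. First, the three double points play no role---only the four triple points (triangles) enter the local-syzygy criterion. Second, the paper does not extract all the needed inequalities from $H_1=0$ in a single stroke; it proceeds in two phases. A direct support analysis of one non-local Koszul syzygy (Section~\ref{sec:PartII}) shows that freeness without a free vertex already forces the twelve triangle inequalities $m_{ij}\le m_{ik}+m_{jk}+1$; only under those inequalities does a Hilbert-function comparison and discriminant bound (Section~\ref{sec:PartI}) produce the polynomial $P(\m)$, which measures precisely the spread of the three matching sums you anticipated, and force $\m$ into ANN form via the case analysis of Proposition~\ref{prop:BigCellMultiplicities}.
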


We prove Theorem~\ref{thm:intro} via a connection to multivariate splines first noted by Schenck in~\cite{HalSplit} and further developed by the first author in~\cite{GSplinesGraphicArrangements}.  Our main tool, Theorem~\ref{thm:FreeEquiv}, is a new criterion for freeness of a multi-braid arrangement $(A_3,\m)$ in terms of syzygies of ideals generated by powers of the linear forms defining the hyperplanes of $A_3$.  This condition gives a robust obstruction to freeness which we use to establish Theorem~\ref{thm:intro}.

Our paper is arranged as follows.  In Section~\ref{sec:notation}, we introduce the notation and background we will use throughout the paper.  Section~\ref{sec:technical} uses homological techniques to prove Theorem~\ref{thm:FreeEquiv}, which says that the multi-arrangement $(A_3,\m)$ is free precisely when a certain syzygy module is ``locally generated.''
Readers may safely skip the rest of that section and simply read the theorem statement if they desire.  In Sections~\ref{sec:PartI} and~\ref{sec:PartII}, we prove Theorem~\ref{thm:intro} using Theorem~\ref{thm:FreeEquiv} along with combinatorial arguments using syzygies and Hilbert functions. In Section~\ref{sec:ANN}, we recover the non-free multiplicities in the classification of Abe-Nuida-Numata \cite{AbeSignedEliminable}.  We conclude with remarks on using the free ANN multiplicities of~\cite{AbeSignedEliminable} to construct minimal free resolutions for certain ideals generated by powers of linear forms.  In Appendix~\ref{app}, we illustrate the classification of Theorem~\ref{thm:intro} in the case of two-valued multiplicities.

\section{Notation and preliminaries} \label{sec:notation}

In this section we set up the main notation to be used throughout the paper.  The data of the $A_3$ arrangement is captured in a labeling of the vertices of $K_4$, the complete graph on four vertices; namely the edge between $v_i$ and $v_j$ in $K_4$ corresponds to the hyperplane $H_{ij}=V(x_i-x_j)$.  As such we will also denote $A_3$ by $\A_{K_4}$.  Put $m_{ij}=\m(H_{ij})$.  We will record the multiplicities of the hyperplanes as a lexicographically ordered list $\m=(m_{01},m_{02},m_{03},m_{12},m_{13},m_{23})$ which we can also associate to the obvious labelling of the edges of $K_4$.  We will often refer to the multiplicities as $a,b,c,d,e,f$ according to the edge-labeling in Figure~\ref{fig:k4-early}.

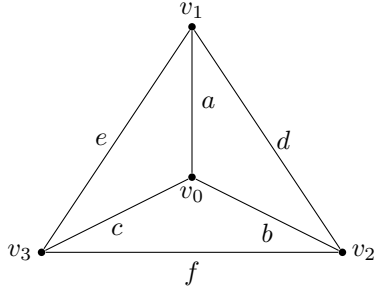
\begin{figure}[htp]
	\centering
	\begin{tikzpicture}[scale=1]
	\tikzstyle{dot}=[circle,fill=black,inner sep=1 pt];
	
	\node[dot] (0) at (0,0) {};
	\node[dot] (1) at (0,2){};
	\node[dot] (2) at (2,-1){};
	\node[dot] (3) at (-2,-1){};
	
	\draw (0)node[below]{$v_0$}--node[right]{$a$}(1) node[above]{$v_1$}-- node[left]{$e$}(3) node[left]{$v_3$};
	\draw (0) --node[below]{$c$} (3) -- node[below]{$f$}(2)node[right]{$v_2$}-- node[below]{$b$} (0);
	\draw (1)--node[right]{$d$} (2);
	\end{tikzpicture}
	\caption{Labelling Convention}\label{fig:k4-early}
\end{figure}

For simplicity, we set $S=\kk[x_0,x_1,x_2,x_3]$ and $\alpha_{ij} = x_i-x_j$ for all $i > j$.  Our goal is to study when the module of multi-derivations
\[
D(A_3,\m)=\{\theta\in\mbox{der}_\kk(S): \theta(\alpha_{ij})\in\langle\alpha_{ij}^{m_{ij}}\rangle \mbox{ for all } 0\le i<j\le 3 \}
\]
is free as an $S$-module.

\begin{remark}\label{rem:essA3}
	Note that there is a line contained in every hyperplane of $A_3$, namely the line described parametrically as $\{(t,t,t,t): t\in \kk\}$.  Thus $A_3$ is not \textit{essential}; an essential arrangement is one in which all hyperplanes intersect in only the origin.  Projecting along this line we obtain an arrangement in $\kk^3$ whose hyperplanes may be described as follows.  Set $x=x_1-x_0,y=x_2-x_0,z=x_3-x_0$.  Then the \textit{essential} $A_3$ arrangement in $\kk^3$ is $A_3^e=\{V(x),V(y),V(z),V(y-x),V(z-x),V(z-y)\}$.  See Figure~\ref{fig:A3} for a picture of this arrangement in $\mathbb{R}^3$.  Set $R=\kk[x,y,z]$.  It is not difficult to see that $D(A_3,\m)\cong D(A^e_3,\m)\otimes_R S$.  Hence freeness of $(A_3,\m)$ and $(A^e_3,\m)$ are equivalent.  We will suppress the distinction between $A^e_3$ and $A_3$, calling both the $A_3$ arrangement.  We will also suppress the distinction between the polynomial rings $S=\kk[x_0,x_1,x_2,x_3]$ and $R=\kk[x,y,z]$, simply letting $S$ refer to the ambient polynomial ring in both situations.  It will be obvious from context (but not important) which polynomial ring is meant.
\end{remark}

\begin{figure}[htp]
\includegraphics[scale=.25]{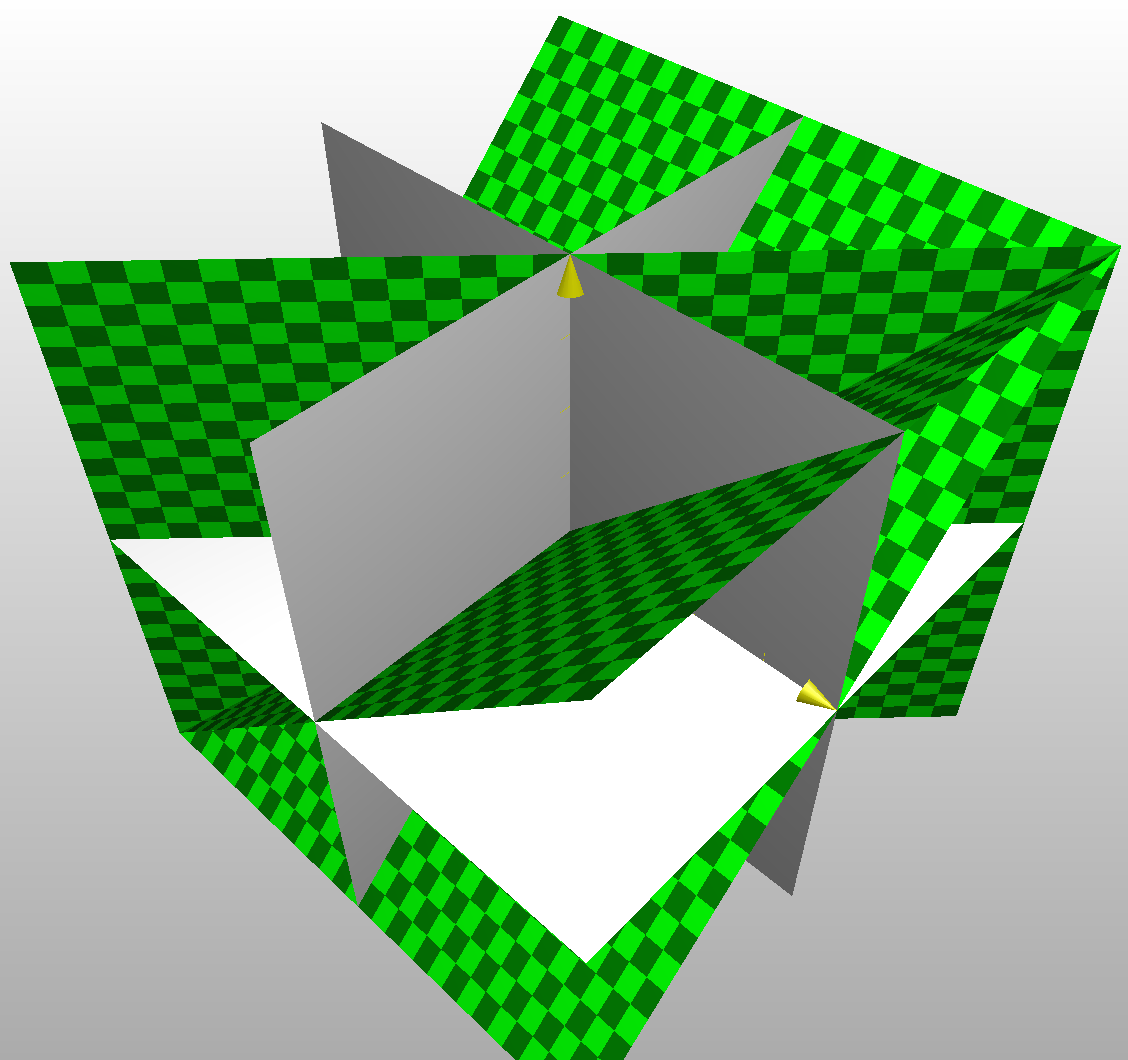}
\caption{Essential $A_3$ arrangement}\label{fig:A3}
\end{figure}

In the next section, which is the technical heart of the paper, we will show that freeness of $D(A_3,\m)$ is determined by syzygies of certain ideals which we now define.  For any edge $e=\{i,j\}$, we set the ideal $J(ij) = \langle \alpha_{ij}^{m_{ij}}\rangle$.  More generally, for any subset $\sigma \subset \{0,1,2,3\}$, we set \[J(\sigma)=\sum_{\{i,j\} \subset \sigma} J(ij).\] For instance, $J(012) = J(01)+J(02)+J(12)$.  Using $x,y,z$ in place of $x_1-x_0,x_2-x_0,x_3-x_0$ as in Remark~\ref{rem:essA3} and the multiplicity labels $(a,b,c,d,e,f)$ as in Figure~\ref{fig:k4-early}, the following is a list of all ideals $J(\sigma)$ for $\sigma\subset\{0,1,2,3\}$.
\[
\begin{array}{ll}
J(01)=\langle x^a \rangle &  J(012)=\langle x^a,y^b,(x-y)^d \rangle \\
J(02)=\langle y^b \rangle & J(013)=\langle x^a,z^c,(x-z)^e \rangle \\
J(03)=\langle z^c \rangle  &  J(023)=\langle y^b,z^c,(y-z)^f \rangle \\
J(12)=\langle (x-y)^d \rangle & J(123)=\langle (x-y)^d,(x-z)^e,(y-z)^f \rangle \\
J(13)=\langle (x-z)^e \rangle & J(0123)=\langle x^a,y^b,z^c,(x-y)^d,(x-z)^e,(y-z)^f \rangle \\
J(23)=\langle (y-z)^f \rangle & \\
\end{array}
\]
Theorem~\ref{thm:FreeEquiv} will show that the freeness of the multi-arrangement $(A_3,\m)$ depends on the relationship between the ``global'' first syzygy module $\syz(J(0123))$ and its ``local'' first syzygies $\syz(J(ijk))$, for $0\le i<j<k\le 3$.

%
%

\section{Technical machinery} \label{sec:technical}

The bulk of this section is technical, and the goal is simply to prove Theorem~\ref{thm:FreeEquiv}. Later sections require only the statement of this theorem, so readers wishing to avoid the technical details can safely skip to Section~\ref{sec:hf}. In particular, additional notation introduced in this section is not used elsewhere in the paper.

A \textit{graphic arrangement} is a subarrangement of a braid arrangement.  More precisely, let $G$ be a vertex-labeled graph on $\ell+1$ vertices $\{v_0,\ldots,v_{\ell}\}$ with no loops or multiple edges.  Denote by $E(G)$ the set of edges of $G$.  We denote the edge between vertices $v_i,v_j$ by $\{i,j\}$.  The graphic arrangement corresponding to $G$ is
\[
\A_G=\bigcup_{\{i,j\}\in E(G)} V(x_j-x_i)\subset\kk^{\ell+1}.
\]
A graphic multi-arrangement $(\A_G,\m)$ is a graphic arrangement $\A_G$ with an assignment $\m:E(G)\to \mathbb{N}$ of a positive integer $\m(e)$ to every edge $e\in E(G)$.

\subsection{Homological necessities} Our main tool to study freeness of $D(\A_G,\m)$ is a chain complex $\cR/\cJ[G]$ whose top homology is the module $D(\A_G,\m)$, introduced in~\cite{GSplinesGraphicArrangements}.  We now define this complex.

Denote by $\D(G)$ the clique complex of $G$.  This is the simplicial complex on the vertex set of $G$ whose simplices are given by sets of vertices that induce a complete subgraph (clique) of $G$.  Denote by $\D(G)_i$ the set of cliques of $G$ with $(i+1)$ vertices, i.e., the simplices of $\D(G)$ of dimension $i$.  

\begin{defn}\label{defn:TopComplex1}
Let $G$ be a graph with $\ell+1$ vertices and set $S=\kk[x_0,\ldots,x_{\ell}]$.  Define the complex $\cR[G]$ to be the simplicial co-chain complex of $\D(G)$ with coefficients in $S$; that is, $\cR[G]_i=\bigoplus\limits_{\gamma\in\D(G)_i} S[e_{\gamma}]$, where $[e_{\gamma}]$ is a formal symbol corresponding to the $i$-dimensional clique $\gamma$. The differential $\delta^i: \cR[G]_i\rightarrow \cR[G]_{i+1}$ is the simplicial differential of the co-chain complex of $\Delta(G)$ with coefficients in $S$. 
\end{defn}

\begin{remark}
By definition, $H^\bullet(\cR[G])$ is isomorphic to the cohomology of $\Delta(G)$ with coefficients in $S$.
\end{remark}

In the following definition, if $e=\{i,j\}\in E(G)$, we will denote $\alpha_{ij}=x_i-x_j$ by $\alpha_e$.

\begin{defn}\label{defn:Jideal}
Let $(\A_G,\m)$ be a graphic multi-arrangement.  Let $\sigma=\{j_0,j_1,\ldots,j_i\}$ be a clique of $G$.  Then
\[
J(\sigma):=\langle \alpha_e^{m_e} |e\in E(\sigma) \rangle.
\]
If $\sigma$ is a vertex of $G$, then $J(\sigma)=0$.
\end{defn}

\begin{defn}\label{defn:derivationComplex}
Given a graphic multi-arrangement $(\A_G,\m)$, $\cJ[G]$ is the sub-chain complex of $\cR[G]$ with $\cJ[G]_i=\sum\limits_{\gamma\in\D(G)_i} J(\gamma)[e_{\gamma}]$.  $\cR/\cJ[G]$ denotes the quotient complex $\cR[G]/\cJ[G]$ with
$\cR/\cJ[G]_i=\bigoplus\limits_{\gamma\in\D(G)_i} (S/J(\gamma))[e_{\gamma}]$.
\end{defn}

\begin{lem}\label{lem:firstCohomology}
The module of multi-derivations $D(\A_G,\m)$ of the graphic multi-arrangement $(\A_G,\m)$ is $H^0(\cR/\cJ[G])$.
\end{lem}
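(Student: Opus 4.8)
The plan is to unwind the definition of the complex $\cR/\cJ[G]$ in homological degrees $0$ and $1$ and check that the kernel of the degree-$0$ differential is exactly the module $D(\A_G,\m)$. Since $\cR/\cJ[G]$ is a co-chain complex concentrated in non-negative degrees, $H^0(\cR/\cJ[G])$ is nothing but $\ker\bigl(\delta^0\colon \cR/\cJ[G]_0 \to \cR/\cJ[G]_1\bigr)$, so there is no quotient to worry about — the whole content is identifying this kernel.

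First I would record the relevant terms. Because $J(\sigma)=0$ whenever $\sigma$ is a single vertex (Definition~\ref{defn:Jideal}), we have $\cR/\cJ[G]_0 = \bigoplus_{i=0}^{\ell} (S/0)[e_{v_i}] = \bigoplus_{i=0}^{\ell} S[e_{v_i}]$, and I identify this with $\mbox{Der}_\kk(S)=\bigoplus_{i=0}^{\ell} S\partial_{x_i}$ via the $S$-linear isomorphism $[e_{v_i}]\mapsto \partial_{x_i}$; that is, a derivation $\theta=\sum_i f_i\partial_{x_i}$ corresponds to the cochain $(f_0,\dots,f_\ell)$. Meanwhile $\cR/\cJ[G]_1 = \bigoplus_{\{i,j\}\in E(G)} (S/J(ij))[e_{\{i,j\}}]$ with $J(ij)=\langle \alpha_{ij}^{m_{ij}}\rangle$. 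Next I would compute $\delta^0$, which is the simplicial co-boundary of $\D(G)$ with $S$-coefficients read modulo $\cJ[G]$: for an edge $e=\{i,j\}$ with $i<j$, the $[e_{\{i,j\}}]$-component of $\delta^0(f_0,\dots,f_\ell)$ is $\pm(f_i-f_j)$ viewed in $S/J(ij)$. Under the identification above, $f_i-f_j=\theta(x_i-x_j)=\theta(\alpha_{ij})$ for $\theta=\sum_k f_k\partial_{x_k}$. Hence $\theta\in\ker\delta^0$ if and only if $\theta(\alpha_{ij})\in J(ij)=\langle \alpha_{ij}^{m_{ij}}\rangle$ for every edge $\{i,j\}\in E(G)$, which is precisely the defining condition for membership in $D(\A_G,\m)$. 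Combining this with $H^0(\cR/\cJ[G])=\ker\delta^0$ completes the proof.

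The argument is bookkeeping, so I do not expect a substantial obstacle; the only points needing a little care are (i) verifying that $[e_{v_i}]\mapsto\partial_{x_i}$ is genuinely an isomorphism of $S$-modules $\cR/\cJ[G]_0\cong\mbox{Der}_\kk(S)$, which holds because both sides are free with matching bases, and (ii) keeping the simplicial sign conventions straight so that the edge component of $\delta^0$ comes out as $f_i-f_j$ up to an overall sign irrelevant to the kernel. It is also worth remarking explicitly that, consistent with this identification, $D(\A_G,\m)$ is being regarded as a submodule of $\mbox{Der}_\kk(S)$ for $S=\kk[x_0,\dots,x_\ell]$.
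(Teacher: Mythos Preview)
Your proof is correct and follows essentially the same approach as the paper's: both compute $H^0(\cR/\cJ[G])=\ker\bar{\delta}^0$ by observing that a $0$-cochain $(f_0,\dots,f_\ell)$ lies in the kernel precisely when $f_i-f_j\in J(ij)$ for every edge $\{i,j\}$, and identify this with the defining condition for $D(\A_G,\m)$. Your write-up is in fact slightly more careful than the paper's, since you make the identification $\cR/\cJ[G]_0\cong\mbox{Der}_\kk(S)$ via $[e_{v_i}]\mapsto\partial_{x_i}$ explicit rather than leaving it implicit.
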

\begin{proof}
Let $F \in \cR[G]_0$. Write $F=(\ldots, F_v, \ldots)_{v \in V(G)}$. Then $F \in \ker(\bar{\delta}^0)$ if and only if, for all $e=\{i,j\}\in E(G)$, we have \[(\delta(F))_e = F_i - F_j \in J(e)=\langle(x_i-x_j)^{m(e)}\rangle.\] This last statement is the definition of $D(\A_G,\m)$.
\end{proof}

With Lemma~\ref{lem:firstCohomology} as our justification, we will call $\cR/\cJ[G]$ the \textit{derivation complex} of $G$.

\begin{exm}\label{ex:ThreeCycleHomologies}
Take $G$ to be the three-cycle with labeling as in Figure~\ref{fig:3cycle}.  

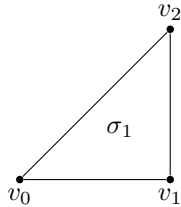
\begin{figure}[htp]
	\centering		
	\begin{tikzpicture}[scale=1]
	\tikzstyle{dot}=[circle,fill=black,inner sep=1 pt];
	
	\node[dot] (1) at (0,0) {};
	\node[dot] (2) at (2,0){};
	\node[dot] (3) at (2,2){};
	\node at (4/3,2/3){$\sigma_1$};
	\draw (1)node[below]{$v_0$}--(2) node[below]{$v_1$}-- (3) node[above]{$v_2$}--(1);
	
	\end{tikzpicture}
	\caption{Three-cycle for Example~\ref{ex:ThreeCycleHomologies}}\label{fig:3cycle}	
\end{figure}

The short exact sequence of complexes $0\rightarrow \cJ[G] \rightarrow \cR[G] \rightarrow \cR/\cJ[G] \rightarrow 0$ is shown below. 
\begin{center}
\begin{tikzcd}
\cJ[G] & & 0 \ar{r}{\delta^0}\ar{d} & J(01)\oplus J(02)\oplus J(12) \ar{d}\ar{r}{\delta^1} & J(012) \ar{d}\ar{r} & 0   \\
\cR[G] &  0 \ar{r} & S^3 \ar{d}\ar{r}{\delta^0} & S\oplus S\oplus S \ar{r}{\delta^1}\ar{d} & S \ar{r}\ar{d} & 0  \\
\cR/\cJ[G] &  0 \ar{r} & S^3 \ar{r}{\overline{\delta^0}} & \dfrac{S}{J(01)}\oplus \dfrac{S}{J(02)} \oplus \dfrac{S}{J(12)} \ar{r}{\overline{\delta^1}} & \dfrac{S}{J(012)} \ar{r} & 0
\end{tikzcd}
\end{center}

The differentials are
\[
\delta^0=\bordermatrix{
  & 0 & 1 & 2 \cr
01 & -1 & 1 & 0  \cr
02 & -1  & 0  & 1 \cr
12 & 0  & -1  & 1  \cr
}
\qquad
\delta^1= \,\bordermatrix{
& 01 & 02 & 12 \cr
012 & 1 & -1 & 1 \cr
}.
\]
The homologies $H^i(\cR[G])$ vanish for $i=1,2$, and $H^0(\cR[G])=S$.  The corresponding long exact sequence in (co)homology splits up to yield the short exact sequence
\[
0\rightarrow S \rightarrow H^0(\cR/\cJ[G]) \rightarrow H^1(\cJ[G])\rightarrow 0,
\]
and an isomorphism $H^1(\cR/\cJ[G])\cong H^2(\cJ[G])=0$.  The short exact sequence actually splits, so $H^0(\cR/\cJ[G])\cong S\oplus H^1(\cJ[G])$.

The map $\delta^1:J(01)\oplus J(02) \oplus J(12)\rightarrow J(012)$ is surjective by definition, hence $H^2(\cR/\cJ[G])=0$.  Also, $H^1(\cJ[G])=\mbox{ker}(\delta^1)=\mbox{syz}(J(012))$, the module of syzygies on $J(012)$.  Hence $H^0(\cR/\cJ[G])=D(\A_G,\m)\cong S\oplus \mbox{syz}(J(012))$.
\end{exm}

\begin{remark}
The ideal $J(012)$ in Example~\ref{ex:ThreeCycleHomologies} is codimension two and Cohen-Macaulay.  Hence $D(\A_G,\m)\cong S\oplus \mbox{syz}(J(012))$ is a free module regardless of the choice of $m_{01},m_{02},m_{12}$.  It is well-known that rank two arrangements are totally free for the same reason; they are second syzygy (or \textit{reflexive}) modules of rank two.
\end{remark}

\begin{remark}\label{rem:syzygies}
In Example~\ref{ex:ThreeCycleHomologies}, we understand $\syz(J(012))$ to represent syzygies among the generators $(x_0-x_1)^{m_{01}},(x_1-x_2)^{m_{12}},(x_0-x_2)^{m_{02}}$, even if this is not a minimal generating set.  For instance, if $m_{01}+m_{12}\le m_{02}+1$, then $J(012)$ is generated by $(x_0-x_1)^{m_{01}}$ and $(x_1-x_2)^{m_{12}}$.  In this case, $\syz(J(012))$ is generated by the Koszul syzygy on $(x_0-x_1)^{m_{01}},(x_1-x_2)^{m_{12}}$ and the relation of degree $m_{02}$ expressing $(x_0-x_2)^{m_{02}}$ as a polynomial combination of $(x_0-x_1)^{m_{01}},(x_1-x_2)^{m_{12}}$.
\end{remark}

In Example~\ref{ex:ThreeCycleHomologies}, $H^i(\cR/\cJ[G])=0$ for $i=1,2$, and $D(\A_G,\m)$ was free.  This is no coincidence.

\begin{thm}\cite[Theorem~3.2]{GSplinesGraphicArrangements}\label{thm:free1}
The graphic multi-arrangement $(\A_G,\m)$ is free if and only if $H^i(\cR/\cJ[G])=0$ for all $i>0$.
\end{thm}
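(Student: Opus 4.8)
The plan is to recast freeness as a depth condition and extract it from the complex $\cR/\cJ[G]$ by homological algebra. Write $\mathcal{C}^\bullet := \cR/\cJ[G]$, $S = \kk[x_0,\dots,x_\ell]$, $n := \ell+1 = \dim S$, and recall from Lemma~\ref{lem:firstCohomology} that $H^0(\mathcal{C}^\bullet) = D := D(\A_G,\m)$. Two structural observations set everything up. First, $D$ is a reflexive $S$-module of rank $n$: inside $\operatorname{Der}_\kk(S)\cong S^n$ it is the intersection of the submodules $\{\theta : \theta(\alpha_e)\in\langle\alpha_e^{m_e}\rangle\}$, each of which becomes free after choosing coordinates so that $\alpha_e$ is a variable, and an intersection of free (hence reflexive) submodules of a free module is reflexive; therefore, over the regular ring $S$, $D$ is free if and only if it is maximal Cohen--Macaulay, i.e. $\operatorname{depth}_S D = n$ (Auslander--Buchsbaum). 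Second, $\mathcal{C}^0 = S^n$ is free, while for $i\ge 1$ and each $i$-clique $\gamma$ of $G$ the $\binom{i+1}{2}$ forms defining $J(\gamma)$ span only an $i$-dimensional space, and $J(\gamma)$ contains an $i$-th power of each; after a linear change of coordinates $J(\gamma)$ is the extension to $S$ of an artinian ideal in $i$ variables, so $S/J(\gamma)$, and hence $\mathcal{C}^i = \bigoplus_{\gamma\in\Delta(G)_i} S/J(\gamma)$, is Cohen--Macaulay of codimension $i$.

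For the implication ``$H^i(\mathcal{C}^\bullet)=0$ for all $i>0$ $\Rightarrow$ $D$ free'', the vanishing makes $0\to D\to\mathcal{C}^0\to\mathcal{C}^1\to\cdots\to\mathcal{C}^N\to 0$ (with $N=\dim\Delta(G)$) exact. Splitting it into short exact sequences $0\to Z^i\to\mathcal{C}^i\to Z^{i+1}\to 0$ ($Z^0=D$, $Z^{N+1}=0$) and applying the Depth Lemma from the right, with $\operatorname{depth}\mathcal{C}^i=n-i$, gives $\operatorname{depth} Z^i\ge n-i$ by descending induction; then $0\to D\to S^n\to Z^1\to 0$ forces $\operatorname{depth} D\ge\min(n,\operatorname{depth} Z^1+1)=n$, so $D$ is maximal Cohen--Macaulay, hence free.

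The converse is the substantive part, and I would attack it through local cohomology. Because each $\mathcal{C}^i$ is Cohen--Macaulay of codimension $i$, its only nonzero local cohomology module is $H^{n-i}_{\mathfrak{m}}(\mathcal{C}^i)$, so the spectral sequence $E_1^{p,q}=H^q_{\mathfrak{m}}(\mathcal{C}^p)\Rightarrow\mathbb{H}^{p+q}_{\mathfrak{m}}(\mathcal{C}^\bullet)$ is concentrated on the antidiagonal $p+q=n$; it degenerates and $\mathbb{H}^j_{\mathfrak{m}}(\mathcal{C}^\bullet)=0$ for all $j\ne n$. Next I would establish the codimension bound $\operatorname{codim}_S\operatorname{Supp} H^i(\mathcal{C}^\bullet)\ge i+1$ for $i\ge 1$: localizing at a prime $\mathfrak{p}$ of codimension $\le i$, a clique $\gamma$ survives only if all its defining linear forms lie in $\mathfrak{p}$, forcing $\lvert V(\gamma)\rvert\le i+1$; and when $\mathcal{C}^i_\mathfrak{p}\ne 0$ one checks $\mathfrak{p}$ must be the linear prime of an $i$-clique $C$, so that $\mathcal{C}^\bullet_\mathfrak{p}$ localizes the derivation complex of the braid arrangement on $C$, whose top cohomology vanishes because the relevant cochain map onto $S/J(C)$ is a signed sum of the surjections $S/J(\tau)\twoheadrightarrow S/J(C)$. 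Feeding both inputs into the spectral sequence $E_2^{p,q}=H^p_{\mathfrak{m}}(H^q(\mathcal{C}^\bullet))\Rightarrow\mathbb{H}^{p+q}_{\mathfrak{m}}(\mathcal{C}^\bullet)$: the codimension bound confines all rows $q\ge 1$ to total degree $\le n-1$, and freeness of $D$ makes the bottom row vanish except at the corner $(n,0)$; since the abutment is concentrated in total degree $n$, a nonzero $H^{q_0}$ (with $q_0\ge1$ maximal) must be killed by differentials, equivalently by chasing depths downward through $0\to Z^i\to\mathcal{C}^i\to B^{i+1}\to0$ and $0\to B^i\to Z^i\to H^i\to0$ starting from $H^{q_0}$, and this produces a nonzero $H^p_{\mathfrak{m}}(D)$ with $p<n$, contradicting $\operatorname{depth}_S D=n$.

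The main obstacle is exactly this last step: ruling out the simultaneous presence of several nonzero modules $H^q(\mathcal{C}^\bullet)$, $q\ge1$, whose spectral-sequence differentials (or interleaved short exact sequences) could a priori conspire to cancel without perturbing $H^n_{\mathfrak{m}}(D)$. The role of the codimension bound is precisely to keep the positive rows of $E_2$ off the antidiagonal $p+q=n$, so that the only class reaching the abutment there is the corner term $H^n_{\mathfrak{m}}(D)$; making the cancellation bookkeeping airtight, either by a careful analysis of the edge homomorphisms or via an Euler-characteristic comparison of Hilbert series, is the step that requires the most care.
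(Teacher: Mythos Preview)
The paper does not supply a proof of this theorem at all: it is quoted from \cite{GSplinesGraphicArrangements}, with the remark immediately afterwards that it ``follows from a result of Schenck using a Cartan--Eilenberg spectral sequence~\cite{Spect}.'' So there is no in-paper argument to compare against, only the indication that the cited proof is a spectral-sequence argument.

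Your forward direction is complete and correct: the depth chase through the short exact sequences $0\to Z^i\to\mathcal{C}^i\to Z^{i+1}\to 0$, using $\operatorname{depth}\mathcal{C}^i=n-i$, gives $\operatorname{depth}D\ge n$ and hence freeness by Auslander--Buchsbaum. Your structural observations (each $\mathcal{C}^i$ is Cohen--Macaulay of codimension $i$; the codimension bound $\operatorname{codim}\operatorname{Supp}H^i\ge i+1$ for $i\ge 1$) are also correct, and these are precisely the ingredients Schenck's argument uses.

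The gap you flag in the converse is real, and your sketch does not close it. Concretely: differentials in the second spectral sequence landing on the corner $E_r^{n,0}$ are \emph{not} forbidden by anything you have written---they simply make $E_\infty^{n,0}$ a proper quotient of $H^n_{\mathfrak m}(D)$, which is perfectly compatible with the abutment. So a nonzero $H^{q_0}$ can, a priori, survive to page $q_0+1$ and then die by mapping into $E_{q_0+1}^{n,0}$ without producing any $H^p_{\mathfrak m}(D)$ with $p<n$. The depth-chase you propose as an ``equivalent'' route stalls for the same reason: from $0\to B^{q_0}\to Z^{q_0}\to H^{q_0}\to 0$ you only get $\operatorname{depth}B^{q_0}\ge\min(\operatorname{depth}Z^{q_0},\operatorname{depth}H^{q_0}+1)$, and you have no lower bound on $\operatorname{depth}H^{q_0}$ (only an upper bound on its dimension). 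Neither the edge-map analysis nor a naive Euler-characteristic comparison of Hilbert series resolves this, because the alternating-sum identities that hold automatically give no new information beyond what you already used. What Schenck's Cartan--Eilenberg argument actually does is organize the comparison more carefully (in effect replacing $\mathcal{C}^\bullet$ by a resolution so that the second spectral sequence degenerates as well), and that extra structure is exactly what is missing here.
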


Theorem~\ref{thm:free1} follows from a result of Schenck using a Cartan-Eilenberg spectral sequence~\cite{Spect}.  Although we use Theorem~\ref{thm:free1} in this paper primarily to study the multi-braid arrangements $(A_3,\m)$, we show in the following example how it may be used to classify free multiplicities on other graphic arrangements.

\begin{exm}[Deleted $A_3$ arrangement]\label{exm:FreeDepends}
Consider the graph $G$ in Figure~\ref{fig:sqdiag}.  This is the simplest example of a graph where freeness of $(\A_G,\m)$ depends on the multiplicities $\m$.

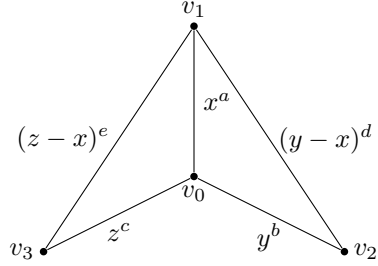
\begin{figure}[htp]
	\centering
	\begin{tikzpicture}[scale=1]
	\tikzstyle{dot}=[circle,fill=black,inner sep=1 pt];
	
	\node[dot] (0) at (0,0) {};
	\node[dot] (1) at (0,2){};
	\node[dot] (2) at (2,-1){};
	\node[dot] (3) at (-2,-1){};
	
	\draw (0)node[below]{$v_0$}--node[right]{$x^a$}(1) node[above]{$v_1$}-- node[left]{$(z-x)^e$}(3) node[left]{$v_3$};
	\draw (3) --node[below]{$z^c$} (0) -- node[below]{$y^b$}(2)node[right]{$v_2$}-- node[right]{$(y-x)^d$} (1);
	\end{tikzpicture}
	\caption{Graph for the deleted $A_3$ arrangement}\label{fig:sqdiag}
\end{figure}

The maps in cohomology (the differentials of $\cR[G]$) are given by
\[
\delta^0=
\bordermatrix{
   & 0  & 1 & 2 & 3 \cr
01 & -1 & 1 & 0 & 0\cr
02 & -1 & 0 & 1 & 0\cr
03 & -1 & 0 & 0 & 1\cr
12 & 0  & -1& 1 & 0\cr
13 & 0  & -1& 0 & 1\cr
}
\qquad
\delta^1=
\bordermatrix{
    & 01 & 02 & 03 & 12 & 13 \cr
013 & 1  & 0  & -1 & 0  & 1 \cr
012 & 1  & -1 &  0 & 1  & 0 \cr
},
\]
where the rows and columns are labeled by faces (see Figure~\ref{fig:sqdiag}).  Let us set $x=x_1-x_0=\alpha_{01},y=x_2-x_0=\alpha_{02},z=x_3-x_0=\alpha_{03}$.  Then $\alpha_{23}=x_3-x_2=z-y$ and $\alpha_{13}=x_3-x_1=z-x$.  Suppose the edge $\{i,j\}$ is assigned multiplicity $m_{ij}$.  Set $m_{01}=a,m_{02}=b,m_{03}=c, m_{12}=d, m_{13}=e$.  We have 
\[
\begin{array}{rl}
J(012) & =\langle x^a,y^b,(y-x)^d \rangle\\
J(013) & =\langle x^a,z^c,(z-x)^e \rangle.
\end{array}
\]
\end{exm}

\begin{remark}
The following characterization of free multiplicities on the deleted $A_3$ arrangement in Example~\ref{exm:FreeDepends} is derived in~\cite{AbeDeletedA3} using techniques for multi-arrangements developed in~\cite{TeraoCharPoly,EulerMult}.  We show how this characterization may be obtained homologically from Theorem~\ref{thm:free1}.
\end{remark}

\begin{prop}
Let $\A_G$ be the deleted $A_3$ arrangement from Example~\ref{exm:FreeDepends}.  With notation as in Example~\ref{exm:FreeDepends}, $(\A_G,\m)$ is free if and only if either $c+e\le a+1$ or $b+d\le a+1$.
\end{prop}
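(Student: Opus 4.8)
The strategy is to apply Theorem~\ref{thm:free1}: the arrangement $(\A_G,\m)$ is free if and only if $H^i(\cR/\cJ[G])=0$ for all $i>0$. Since $\Delta(G)$ for the graph in Figure~\ref{fig:sqdiag} is two-dimensional (it has two triangles $012$ and $013$ sharing the edge $01$), the complex $\cR/\cJ[G]$ is concentrated in degrees $0,1,2$, so we only need to analyze $H^1$ and $H^2$. As in Example~\ref{ex:ThreeCycleHomologies}, I would use the short exact sequence of complexes $0\to\cJ[G]\to\cR[G]\to\cR/\cJ[G]\to 0$ together with the fact that $H^i(\cR[G])$ vanishes for $i>0$ (here $\Delta(G)$ is contractible, being two triangles glued along an edge). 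The resulting long exact sequence gives $H^i(\cR/\cJ[G])\cong H^{i+1}(\cJ[G])$ for $i\ge 1$. So freeness is equivalent to $H^2(\cJ[G])=0$; and $H^1(\cJ[G])$ is identified with an explicit syzygy module which, together with the splitting $H^0(\cR/\cJ[G])\cong S\oplus H^1(\cJ[G])$, will tell us the rank of the free part — but the key dichotomy lives in $H^2$.

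Concretely, $\cJ[G]$ in degrees $1,2$ is the map
\[
\delta^1\colon J(01)\oplus J(02)\oplus J(03)\oplus J(12)\oplus J(13)\longrightarrow J(013)\oplus J(012),
\]
with matrix as displayed in Example~\ref{exm:FreeDepends}. First I would check surjectivity of $\delta^1$ onto $J(013)\oplus J(012)$: the image of the summand $J(12)$ covers the $(x-y)^d$ generator of $J(012)$, the image of $J(13)$ covers $(z-x)^e$ in $J(013)$, and $J(01),J(02),J(03)$ cover the remaining generators $x^a,y^b,z^c$; so $\delta^1$ is surjective and $H^2(\cJ[G])=\operatorname{coker}\delta^1=0$ \emph{always}. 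That seems to kill the dichotomy prematurely, so in fact I expect freeness is forced by $H^1$ — wait, $H^1(\cR/\cJ[G])\cong H^2(\cJ[G])=0$ always by the above, so the obstruction to freeness must actually be $H^2(\cR/\cJ[G])$. Re-examining: $\cR/\cJ[G]$ has $(\cR/\cJ[G])_2 = S/J(013)\oplus S/J(012)$ and $\overline{\delta^1}$ is induced by $\delta^1$; its cokernel is $H^2(\cR/\cJ[G])$, which need not vanish because the two triangles are only glued along a \emph{single} edge, so the map $\overline{\delta^1}\colon \bigoplus S/J(e)\to S/J(013)\oplus S/J(012)$ splits as a direct sum of two independent surjections, each the difference map for one triangle — each of which \emph{is} surjective. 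Hmm.

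So the real computation is $H^1(\cR/\cJ[G])$: this is $\ker\overline{\delta^1}/\operatorname{im}\overline{\delta^0}$. I would compute $\ker\overline{\delta^1}$ as the set of tuples $(g_{01},g_{02},g_{03},g_{12},g_{13})$ of classes in the respective $S/J(e)$ satisfying the two triangle relations mod $J(012)$ and mod $J(013)$ respectively, and $\operatorname{im}\overline{\delta^0}$ as the image of $\overline{\delta^0}$ from $S^4$. Since the two triangle conditions share only the variable $g_{01}$, I would split the analysis: projecting to the $012$-triangle coordinates and the $013$-triangle coordinates, the kernel modulo the image decomposes into contributions controlled by the syzygy modules $\syz(J(012))$ and $\syz(J(013))$, amalgamated over the common edge. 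The upshot should be that $H^1(\cR/\cJ[G])$ is nonzero exactly when a certain syzygy — coming from expressing $x^a$ in terms of the generators of $J(012)$ and of $J(013)$ — fails to ``lift'' compatibly; and classical results on ideals of the form $\langle u^p, v^q, (u-v)^r\rangle$ in $\kk[u,v]$ (e.g.\ the structure of $\syz(J(012))$ recalled in Remark~\ref{rem:syzygies}) show this happens precisely when both $c+e\ge a+2$ and $b+d\ge a+2$. Thus $H^1=0$ iff $c+e\le a+1$ or $b+d\le a+1$, which is exactly the claimed criterion.

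\textbf{Main obstacle.} The crux is the explicit computation of $H^1(\cR/\cJ[G])$: one must pin down, in terms of Hilbert functions or generators of $\syz(J(012))$ and $\syz(J(013))$, exactly when the shared-edge compatibility can be satisfied. The clean way is to note that $J(012)\subset\kk[x,y]$ and $J(013)\subset\kk[x,z]$ are codimension-two complete-intersection-like ideals whose syzygy modules are completely understood (two generators and an explicit ``expressing'' relation, as in Remark~\ref{rem:syzygies}, depending on whether the relevant exponent inequality holds); the degree bookkeeping for when the class of $x^a$ in $S/J(012)$ and in $S/J(013)$ admits a common preimage is where all the arithmetic conditions $c+e$ vs.\ $a+1$ and $b+d$ vs.\ $a+1$ enter. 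I would carry this out by choosing the two triangle relations as the ``new'' generators and reducing the problem to a single gluing equation over $\kk[x]$ in the edge $01$.
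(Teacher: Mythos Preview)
Your setup is exactly right up through the identification $H^{1}(\cR/\cJ[G])\cong H^{2}(\cJ[G])=\operatorname{coker}\delta^{1}$, and this \emph{is} where the dichotomy lives. The error is your claim that $\delta^{1}$ is always surjective. You argue that each generator of $J(013)$ and of $J(012)$ is hit by some summand, but the summand $J(01)$ maps \emph{diagonally}: its image is $\{(h,h):h\in J(01)\}\subset J(013)\oplus J(012)$, not $J(01)\oplus J(01)$. So while $J(03)\oplus J(13)$ lands in $(J(03)+J(13))\oplus 0$ and $J(02)\oplus J(12)$ lands in $0\oplus(J(02)+J(12))$, the element $(x^{a},0)$ is in the image only if either $x^{a}\in\langle z^{c},(x-z)^{e}\rangle$ (so it can be produced in the first slot without using $J(01)$) or $x^{a}\in\langle y^{b},(x-y)^{d}\rangle$ (so the diagonal contribution can be cancelled in the second slot). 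Since $x=z+(x-z)=y+(x-y)$, these membership conditions are precisely $a\ge c+e-1$ and $a\ge b+d-1$, i.e.\ $c+e\le a+1$ or $b+d\le a+1$. A degree-$a$ argument shows this is also necessary: if neither inequality holds then $(x^{a},0)\notin\operatorname{im}\delta^{1}$, so $H^{2}(\cJ[G])\neq 0$.

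Thus the paper's proof is exactly the computation you set up and then abandoned: freeness $\iff H^{2}(\cJ[G])=0\iff \delta^{1}$ surjective $\iff$ one of the two inequalities. Your subsequent attempt to locate the obstruction in $H^{2}(\cR/\cJ[G])$ (which really is always zero, since $\cJ[G]$ sits in degrees $1,2$) or to compute $H^{1}(\cR/\cJ[G])$ directly from the $\cR/\cJ$ side is unnecessary detour caused by the surjectivity mistake; the shared-edge coupling you eventually invoke is precisely the diagonal phenomenon above, visible most cleanly on the $\cJ$ side.
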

\begin{proof}
We have $H^i(\cR[G])=0$ for $i>0$ since $\D(G)$ is contractible, and $H^1(\cR/\cJ[G])\cong H^2(\cJ[G])$ via the long exact sequence corresponding to $0\rightarrow \cJ[G] \rightarrow \cR[G] \rightarrow \cR/\cJ[G] \rightarrow 0$.  The complex $\cJ[G]$ has the form
\[
\bigoplus_{\{i,j\}\in G} J(ij) \xrightarrow{\delta^1} J(013) \oplus J(023).
\]
The map $\delta^1$ is given by the matrix
\[
\delta^1=
\bordermatrix{
	& 01 & 02 & 03 & 12 & 13 \cr
	013 & 1  & 0  & -1 & 0  & 1 \cr
	012 & 1  & -1 &  0 & 1  & 0 \cr
}.
\]
Let us determine when $\delta^1$ is surjective, hence when $H^2(\cJ[G])\cong H^1(\cR/\cJ[G])=0$.  We see that, given $(f_1,f_2,f_3,f_4,f_5)\in \cJ[G]_1$,  $\delta^1(f_1,f_2,f_3,f_4,f_5)=(f_1+f_3-f_5,f_1-f_2-f_4)$.  This map surjects onto $J(\sigma_1)\oplus J(\sigma_2)$ if and only if either $J(013)$ is generated by $x^{a},z^{c}$ or $J(012)$ is generated by $y^{b}, (y-x)^{d}$.  This in turn happens if and only if either $c+e\le a+1$ or $b+d\le a+1$.  By Theorem~\ref{thm:free1}, $(\A_G,\m)$ is free if and only if $c+e\le a+1$ or $b+d\le a+1$.
\end{proof}

\begin{remark}
Let $G$ be a graph on $n+1$ vertices.  As a consequence of Theorem~\ref{thm:free1}, the Hilbert polynomial (and indeed Hilbert function) of $D(\A_G,\m)$, when $(\A_G,\m)$ is free, is given by the Euler characteristic of $\cR/\cJ[G]$, namely
\[
\begin{array}{rl}
HP(D(\A_G,\m),d) & =\sum\limits_{i=0}^{\dim \D(G)} (-1)^i HP(\cR/\cJ[G]_i,d)\\[5 pt] 
&=\sum\limits_{i=0}^{\dim \D(G)}(-1)^i\sum_{\gamma\in\D(G)_i} HP(S/J(\gamma),d).
\end{array}
\]	
Assuming $D(\A_G,\m)$ is free, generated in degrees $0,A_1,\ldots,A_k$, we also have
\[
HP(D(\A_G,\m),d)=\dbinom{d+n-1}{n-1}+\sum\limits_{i=1}^k \dbinom{d+n-1-A_i}{n-1}.
\]
Equating the leading coefficients of these two expressions yields $k=n$.  Equating second coefficients yields the well-known expression $A_1+\cdots+A_n=|\m|$, where $|\m|=\sum_{ij}m_{ij}$.  Equating coefficients of $d^{n-3}$ yields the equality of so-called second local and global mixed products, $GMP(2)=LMP(2)$, defined in~\cite{TeraoCharPoly}.

This gives some insight into how a better understanding of the homologies of $\cR/\cJ[G]$ will lead to more precise obstructions to freeness.  Indeed, the Hilbert polynomial takes no account of graded dimensions that eventually vanish, while freeness may depend heavily on such information.  It is this Artinian information that we now characterize.
\end{remark}

\subsection{Freeness via syzygies}
For the remainder of the paper, we specialize to the $A_3$ braid arrangement.  In this section we characterize free multiplicities on $A_3$ in Theorem~\ref{thm:FreeEquiv} as multiplicities for which a certain syzygy module is generated locally.
We label $K_4$ as in Figure~\ref{fig:K4}.  Just as in Remark~\ref{rem:essA3}, we choose variables $x=x_1-x_0,y=x_2-x_0,z=x_3-x_0$.

\begin{figure}[htp]
	\centering
	\begin{tikzpicture}[scale=1]
	\tikzstyle{dot}=[circle,fill=black,inner sep=1 pt];
	
	\node[dot] (0) at (0,0) {};
	\node[dot] (1) at (0,2){};
	\node[dot] (2) at (2,-1){};
	\node[dot] (3) at (-2,-1){};
	
	\draw (0)node[below]{$v_0$}--node[right]{$x^a$}(1) node[above]{$v_1$}-- node[left]{$(z-x)^e$}(3) node[left]{$v_3$};
	\draw (0) --node[above]{$z^c$} (3) -- node[below]{$(z-y)^f$}(2)node[right]{$v_2$}-- node[above]{$y^b$} (0);
	\draw (1)--node[right]{$(y-x)^d$} (2);
	\end{tikzpicture}
	\caption{Complete graph on four vertices}\label{fig:K4}
\end{figure}
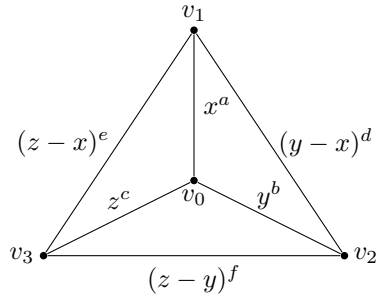
	
\begin{lem}
For any multiplicity $\m=(a,b,c,d,e,f)$, $D(\A_{K_4},\m)$ is free if and only if $H^2(\cJ[K_4])=0$.
\end{lem}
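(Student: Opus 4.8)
The plan is to deduce this from Theorem~\ref{thm:free1} by identifying which cohomologies of $\cR/\cJ[K_4]$ can be nonzero and then translating the vanishing of those into the single condition $H^2(\cJ[K_4])=0$. Since $\D(K_4)$ is the full $3$-simplex on $\{0,1,2,3\}$, it is contractible, so $H^i(\cR[K_4])=0$ for $i>0$ and $H^0(\cR[K_4])=S$. First I would write down the short exact sequence of complexes $0\to\cJ[K_4]\to\cR[K_4]\to\cR/\cJ[K_4]\to 0$ and take the long exact sequence in cohomology. Because $\cR[K_4]$ is exact in positive degrees, the connecting maps give isomorphisms $H^i(\cR/\cJ[K_4])\cong H^{i+1}(\cJ[K_4])$ for all $i\ge 1$. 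So the freeness criterion $H^i(\cR/\cJ[K_4])=0$ for all $i>0$ (Theorem~\ref{thm:free1}) becomes $H^{i}(\cJ[K_4])=0$ for all $i\ge 2$.

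Next I would observe that $\cJ[K_4]$ is supported in cohomological degrees $1,2,3$: its terms are $\bigoplus_{\{i,j\}}J(ij)$ in degree $1$, $\bigoplus_{|\sigma|=3}J(\sigma)$ in degree $2$, and $J(0123)$ in degree $3$. Therefore the only possibly-nonzero cohomologies in the relevant range are $H^2(\cJ[K_4])$ and $H^3(\cJ[K_4])$, and it remains to show that $H^3(\cJ[K_4])=0$ automatically. This is the vanishing at the top, which amounts to surjectivity of the last differential $\delta^2:\bigoplus_{0\le i<j<k\le 3}J(ijk)\to J(0123)$. But this surjectivity is immediate from the definitions: $J(0123)=\sum_{\{i,j\}}J(ij)$ is generated by the elements $\alpha_{ij}^{m_{ij}}$, and each such generator already lies in the image of $\delta^2$ (indeed it sits in any summand $J(ijk)$ containing the edge $\{i,j\}$, and maps in with a sign $\pm1$). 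Concretely, for a fixed edge $\{i,j\}$ pick one clique $ijk$ containing it and send the element $\pm\alpha_{ij}^{m_{ij}}$ in that summand to recover the generator; so $\delta^2$ is onto and $H^3(\cJ[K_4])=0$.

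Putting these together: $D(\A_{K_4},\m)$ is free iff $H^i(\cR/\cJ[K_4])=0$ for $i>0$ iff $H^i(\cJ[K_4])=0$ for $i\ge 2$ iff (using $H^3(\cJ[K_4])=0$ for free) $H^2(\cJ[K_4])=0$. I do not expect any serious obstacle here; the only points requiring a line of care are checking that the connecting homomorphisms in the long exact sequence are genuinely isomorphisms in the range $i\ge 1$ (which follows because the middle complex $\cR[K_4]$ has vanishing cohomology in all positive degrees, so in the six-term pieces the $\cR[K_4]$ terms drop out), and spelling out the surjectivity of the top differential $\delta^2$. The contractibility of $\D(K_4)$ — equivalently, that $K_4$ is a complete graph so its clique complex is a simplex — is the geometric input that makes $\cR[K_4]$ acyclic; this is the same mechanism already used in Example~\ref{exm:FreeDepends} and the proposition following it, just with $K_4$ in place of the deleted arrangement.
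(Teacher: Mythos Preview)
Your proposal is correct and follows essentially the same approach as the paper: contractibility of $\Delta(K_4)$ kills the cohomology of $\cR[K_4]$ in positive degrees, the long exact sequence gives $H^i(\cR/\cJ[K_4])\cong H^{i+1}(\cJ[K_4])$ for $i\ge 1$, and surjectivity of the top differential forces $H^3(\cJ[K_4])=0$, reducing Theorem~\ref{thm:free1} to the single condition $H^2(\cJ[K_4])=0$. The paper's proof is identical in structure, only slightly terser in dispatching the surjectivity of the last map.
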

\begin{proof}
Since the clique complex $\D(K_4)$ is a three-dimensional simplex, it is contractible and $H^i(\cR[K_4])=0$ except when $i=0$.  From the long exact sequence in homology associated to
\[
0\rightarrow \cJ[K_4] \rightarrow \cR[K_4] \rightarrow \cR/\cJ[K_4]\rightarrow 0,
\]
we conclude that $H^i(\cR/\cJ[K_4])\cong H^{i+1}(\cJ[K_4])$ for $i\ge 1$.  It follows from Theorem~\ref{thm:free1} that $(\A_{K_4},\m)$ is free if and only if $H^i(\cJ[K_4])=0$ for all $i>1$.  
The complex $\cJ[K_4]$ has the form
\[
0\rightarrow \bigoplus_{ij\in\D(K_4)_1} J(ij)\rightarrow \bigoplus_{ijk\in\D(K_4)_2} J(ijk) \rightarrow J(0123) \rightarrow 0.
\]
The final map is clearly surjective, so $H^3(\cJ[K_4])=0$. Hence $(\A_{K_4},\m)$ is free if and only if $H^2(\cJ[K_4])=0$.
\end{proof}

The following lemma gives a presentation for the homology module $H^2(\cJ[K_4])$.

\begin{lem}\label{lem:H1Jpres}
Let $K_4$ have multiplicities $\m(\tau)\in\Z_+$ for each edge $\tau\in E(K_4)$. Endow the formal symbols $[e_{\tau}]$ with degrees $\m(\tau)$. We define the module of locally generated syzygies $K \subset \bigoplus\limits_{\tau\in E(K_4)} Se_\tau$ as follows. For each $\sigma \in \Delta(K_4)_2$, set 
	\[
	K_{\sigma}=\left\{\sum_{\tau\subset\sigma} a_{\tau} [e_{\tau}]: \sum a_{\tau} \alpha_{\tau}^{\m(\tau)}=0\right\},
	\]
	and $K=\sum_{\sigma} K_{\sigma}$.
	Also define the global syzygy module $V\subset \bigoplus\limits_{\tau\in E(K_4)} S[e_\tau]$ by
	\[
	V=\left\{\sum_{\tau\in E(K_4)} a_{\tau} [e_{\tau}]: \sum a_{\tau} \alpha_{\tau}^{\m(\tau)}=0\right\}.
	\]
	Then $K\subset V$ and $H^2(\cJ[K_4])\cong V/K$ as $S$-modules.
\end{lem}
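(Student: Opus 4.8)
The plan is to compute $H^2(\cJ[K_4])=\ker(\delta^2)/\operatorname{im}(\delta^1)$ directly from the explicit form of the simplicial differentials $\delta^1\colon\cJ[K_4]_1\to\cJ[K_4]_2$ and $\delta^2\colon\cJ[K_4]_2\to\cJ[K_4]_3$, by constructing a surjection $\Psi\colon\ker(\delta^2)\to V/K$ whose kernel is exactly $\operatorname{im}(\delta^1)$. Write $[\tau:\sigma]$ for the incidence coefficients of $\Delta(K_4)$. An element of $\cJ[K_4]_2=\bigoplus_\sigma J(\sigma)[e_\sigma]$ is a tuple $(g_\sigma)_\sigma$ with $g_\sigma\in J(\sigma)$, and it lies in $\ker(\delta^2)$ precisely when $\sum_\sigma[\sigma:0123]g_\sigma=0$ in $S$. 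Given such a tuple, I would choose for each triangle $\sigma$ an expansion $g_\sigma=\sum_{\tau\subset\sigma}b_{\sigma,\tau}\alpha_\tau^{\m(\tau)}$ with $b_{\sigma,\tau}\in S$ (not unique, since the $\alpha_\tau^{\m(\tau)}$ need not minimally generate $J(\sigma)$), set $a_\tau=\sum_{\sigma\supset\tau}[\sigma:0123]\,b_{\sigma,\tau}$, and define $\Psi((g_\sigma)_\sigma)$ to be the class of $(a_\tau)_\tau$ in $V/K$. The kernel condition rearranges to $\sum_\tau a_\tau\alpha_\tau^{\m(\tau)}=0$, so $(a_\tau)_\tau\in V$; and replacing the $b_{\sigma,\tau}$ by other expansions changes the difference, triangle by triangle, by a local syzygy in $K_\sigma$, so the class in $V/K$ is unchanged. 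Hence $\Psi$ is well defined, and it is visibly $S$-linear and degree preserving. That $K\subseteq V$ is immediate, since a syzygy on the three generators of some $J(\sigma)$, padded by zeros, is a syzygy on all six generators of $J(0123)$.

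Next I would check surjectivity and compute $\ker(\Psi)$. The only combinatorial input is that every edge $\tau$ of $K_4$ lies in exactly two triangles $\sigma_1,\sigma_2$, together with the relation $[\tau:\sigma_1][\sigma_1:0123]=-[\tau:\sigma_2][\sigma_2:0123]$, which is the $\partial^2=0$ identity of $\Delta(K_4)$ applied to the pair $(\tau,0123)$. For surjectivity, given $(a_\tau)_\tau\in V$ I assign each edge to one of its two triangles and place all of $a_\tau$ (scaled by $[\sigma:0123]$) in that slot; then $\sum_\sigma[\sigma:0123]g_\sigma=\sum_\tau a_\tau\alpha_\tau^{\m(\tau)}=0$, so the resulting tuple lies in $\ker(\delta^2)$ and maps to the class of $(a_\tau)_\tau$. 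For the kernel: if $(g_\sigma)_\sigma=\delta^1((f_\tau)_\tau)$ with $f_\tau=c_\tau\alpha_\tau^{\m(\tau)}$, one may take $b_{\sigma,\tau}=[\tau:\sigma]c_\tau$, whence $a_\tau=c_\tau\sum_{\sigma\supset\tau}[\tau:\sigma][\sigma:0123]=0$ by $\partial^2=0$, so $\operatorname{im}(\delta^1)\subseteq\ker(\Psi)$. Conversely, if $\Psi((g_\sigma)_\sigma)=0$, then after subtracting suitable local syzygies from the $b_{\sigma,\tau}$ (which does not change the $g_\sigma$) I may assume $a_\tau=\sum_{\sigma\supset\tau}[\sigma:0123]b_{\sigma,\tau}=0$ for every $\tau$; since $\tau$ meets only the two triangles $\sigma_1,\sigma_2$, this forces $[\sigma_1:0123]b_{\sigma_1,\tau}=-[\sigma_2:0123]b_{\sigma_2,\tau}$, which combined with the sign relation above says that $[\tau:\sigma]b_{\sigma,\tau}$ is independent of the triangle $\sigma\supset\tau$. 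Setting $c_\tau$ equal to this common value and $f_\tau=c_\tau\alpha_\tau^{\m(\tau)}\in J(\tau)$ then yields $\delta^1((f_\tau)_\tau)=(g_\sigma)_\sigma$. Thus $\ker(\Psi)=\operatorname{im}(\delta^1)$ and $H^2(\cJ[K_4])\cong V/K$.

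The step I expect to be the real obstacle is the last one, $\ker(\Psi)\subseteq\operatorname{im}(\delta^1)$, because it requires actually producing a preimage under $\delta^1$; this is exactly where the geometry of $K_4$ enters — an edge meeting only two triangles, plus the $\partial^2=0$ sign relation — and the argument genuinely fails for most other graphs, consistent with how subtle freeness is for $(A_3,\m)$. I would be especially careful with the incidence signs here, as they are easy to get wrong. As a sanity check, and as an alternative write-up, one can package everything homologically: replace each $J(\sigma)$ in $\cJ[K_4]$ by the free $S$-module on its distinguished generators $\{\alpha_\tau^{\m(\tau)}:\tau\in E(\sigma)\}$ to get a complex $\widetilde{\cJ}$ surjecting onto $\cJ[K_4]$; this $\widetilde{\cJ}$ splits as a direct sum over the edges $\tau$, the summand for $\tau$ being a shift of $S$ tensored with the augmented simplicial cochain complex of the link of $\tau$ in $\Delta(K_4)$ — a $1$-simplex, hence acyclic — so $\widetilde{\cJ}$ is exact, and the long exact cohomology sequence of $0\to\ker\to\widetilde{\cJ}\to\cJ[K_4]\to 0$ identifies $H^2(\cJ[K_4])$ with $H^3$ of the kernel complex, which is precisely $V/\sum_\sigma K_\sigma=V/K$. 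This reformulation is cleaner conceptually but ultimately hides the same incidence-sign computation inside the connecting homomorphism.
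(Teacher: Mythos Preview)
Your proof is correct. Your primary approach---constructing the explicit map $\Psi\colon\ker(\delta^2)\to V/K$ by lifting each $g_\sigma$ to coefficients $b_{\sigma,\tau}$ and checking well-definedness, surjectivity, and $\ker(\Psi)=\operatorname{im}(\delta^1)$ by hand---is more elementary than what the paper does. The paper instead builds a three-row commutative diagram with exact columns: the top row is $\cJ[K_4]$, the middle row is the complex of free modules on the distinguished generators (your $\widetilde{\cJ}$), and the bottom row is the kernel complex $0\to\bigoplus_\sigma K_\sigma\xrightarrow{\iota}V$. It then shows the middle row is exact by the same edge-by-edge splitting you describe, and reads off $H^2(\cJ[K_4])\cong\operatorname{coker}(\iota)=V/K$ from the long exact sequence. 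So your ``alternative write-up'' at the end is essentially the paper's argument; your main argument trades the homological packaging for an explicit chase, which has the advantage of making the role of the sign identity $[\tau:\sigma_1][\sigma_1:0123]=-[\tau:\sigma_2][\sigma_2:0123]$ completely transparent at the cost of more bookkeeping. Both approaches use exactly the same combinatorial input (each edge lies in two triangles, and $\partial^2=0$), so neither is more general than the other.
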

\begin{proof}
	The proof is very similar to the proof of ~\cite[Lemma~3.8]{LCoho}. 
	Set up the following diagram with exact columns, whose first row is the complex $\cJ[K_4]$.
	
	\begin{tikzcd}
		0 & 0 & 0\\
		\bigoplus\limits_{\tau\in E(K_4)} J(\tau) \ar{r}\ar{u} & \bigoplus\limits_{\sigma\in\D(K_4)_2} J(\sigma) \ar{r}\ar{u} & J(0123) \ar{u}\\
		\bigoplus\limits_{\tau\in E(K_4)} S[e_{\tau}] \ar{r}\ar{u} & \bigoplus\limits_{\sigma \in \Delta(K_4)_2} \left[\bigoplus\limits_{\substack{\tau\in E(K_4)\\ \tau\subset \sigma}} S[e_{\tau,\sigma}] \right] \ar{r}\ar{u} & \bigoplus\limits_{\tau\in E(K_4)} S[e_{\tau}]\ar{u} \\
		0\ar{u}\ar{r} & \bigoplus\limits_{\sigma\in\D(K_4)_2} K_\sigma \ar{u}\ar{r}{\iota} & V \ar{u}\\
		& 0\ar{u} & 0\ar{u} \\
	\end{tikzcd}
	
	The middle row is in fact exact.  We argue this as follows.  Given $\tau\in\Delta(K_4)_1$, let $\Delta_\tau$ be the sub-complex of $\Delta(K_4)$ consisting of simplices which don't contain $\tau$; $\Delta_\tau$ is the union of two triangles joined along the one edge which does not intersect $\tau$.  The middle row splits as a direct sum of sub-complexes of the form
	\[
	S[e_\tau]\rightarrow S[e_{\tau,\sigma_1}]\oplus S[e_{\tau,\sigma_2}]\rightarrow S[e_\tau],
	\]
	where $\sigma_1,\sigma_2$ are the two triangles which meet along $\tau$.  The (co)homology of each of these sub-complexes may be identified with the simplicial cohomology of $\Delta(K_4)$ relative to $\Delta_\tau$, which vanishes in all dimensions.
	
	Now the long exact sequence in homology yields the isomorphisms $H^1(\cJ[K_4])\cong \mbox{ker}(\iota)$ and $H^2(\cJ[K_4])\cong \mbox{coker}(\iota)$.  The image of $\bigoplus\limits_{\sigma\in\D(K_4)_2} K_\sigma$ under $\iota$ is precisely $K$, so we are done.
\end{proof}

As a consequence of Theorem~\ref{thm:free1} and Lemma~\ref{lem:H1Jpres}, the multiplicity $\m$ is free if and only if the syzygy module of $J(0123)$ is ``locally generated,'' as we summarize in the next theorem.

\begin{thm}\label{thm:FreeEquiv}
The multiplicity $\m$ is free on $K_4$ if and only if the syzygies on the ideal $J(0123)$ are generated by the syzygies on the four sub-ideals $J(012)$, $J(013)$, $J(023)$, $J(123)$.  With notation as in Figure~\ref{fig:K4}, the multiplicity $\m=(a,b,c,d,e,f)$ is free if and only if the syzygies on
\[
\langle x^a,y^b,z^c,(y-x)^d, (z-x)^e, (z-y)^f \rangle
\]
are generated by the syzygies on the four sub-ideals
\[
\begin{array}{cc}
\langle x^a,y^b,(y-x)^d \rangle & \langle x^a, z^c, (z-x)^e \rangle \\[10 pt]
\langle y^b, z^c, (z-y)^d \rangle & \langle (z-y)^d,(z-x)^e,(y-x)^f\rangle.
\end{array}
\]
\end{thm}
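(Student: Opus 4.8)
The plan is to derive Theorem~\ref{thm:FreeEquiv} directly from Theorem~\ref{thm:free1} together with Lemma~\ref{lem:H1Jpres}; essentially all of the homological content has already been extracted, so what remains is to translate the vanishing of a homology module into the stated assertion about syzygies. First I would recall from the lemma immediately preceding Lemma~\ref{lem:H1Jpres} that, since $\Delta(K_4)$ is a contractible simplex and the terminal map $\bigoplus_{\sigma\in\Delta(K_4)_2} J(\sigma)\to J(0123)$ of $\cJ[K_4]$ is surjective, freeness of $(\A_{K_4},\m)$ is equivalent to $H^2(\cJ[K_4])=0$. Then Lemma~\ref{lem:H1Jpres} identifies $H^2(\cJ[K_4])$ with the quotient $V/K$, where $V$ and $K$ are the global and locally generated syzygy modules inside $\bigoplus_{\tau\in E(K_4)} S[e_\tau]$ (with $\deg[e_\tau]=\m(\tau)$). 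Hence $(\A_{K_4},\m)$ is free if and only if $V=K$.

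Next I would unwind what the equality $V=K$ says. By its definition in Lemma~\ref{lem:H1Jpres}, $V$ is the syzygy module of the six-tuple $(\alpha_\tau^{\m(\tau)})_{\tau\in E(K_4)}$, which is a generating set of $J(0123)$; thus $V=\syz(J(0123))$ with respect to this distinguished generating set, in the sense of the convention in Remark~\ref{rem:syzygies}. For each triangle $\sigma\in\Delta(K_4)_2$, the three elements $\alpha_\tau^{\m(\tau)}$ with $\tau\subset\sigma$ form a generating set of the sub-ideal $J(\sigma)$, and $K_\sigma$ is precisely the syzygy module of $J(\sigma)$ on these three generators, embedded into $\bigoplus_{\tau\in E(K_4)} S[e_\tau]$ by extending by zero in the three remaining coordinates. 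Consequently $K=\sum_{\sigma} K_\sigma$ is exactly the submodule of $\syz(J(0123))$ generated by the syzygies on the four sub-ideals $J(012)$, $J(013)$, $J(023)$, $J(123)$, and $V=K$ is precisely the statement that the syzygies on $J(0123)$ are generated by the syzygies on these four sub-ideals. This proves the first sentence of the theorem.

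The second sentence is then just the explicit form of the first, obtained by substituting the labels of Figure~\ref{fig:K4}: with $x=x_1-x_0$, $y=x_2-x_0$, $z=x_3-x_0$ one has $\alpha_{01}=x$, $\alpha_{02}=y$, $\alpha_{03}=z$, $\alpha_{12}=y-x$, $\alpha_{13}=z-x$, $\alpha_{23}=z-y$, so that $J(0123)=\langle x^a,y^b,z^c,(y-x)^d,(z-x)^e,(z-y)^f\rangle$ and the four triangles $012,013,023,123$ produce the four displayed sub-ideals $J(012)$, $J(013)$, $J(023)$, $J(123)$. I do not expect a genuine obstacle in this argument, since the homological work is already done in Lemma~\ref{lem:H1Jpres}; the only point requiring a little care is the bookkeeping of how a syzygy on the three generators of a sub-ideal $J(\sigma)$ is to be regarded as an element of the syzygy module of the six generators of $J(0123)$ (namely, by padding with zeros), and this is exactly the identification built into the definitions of $K_\sigma$, $K$, and $V$ in Lemma~\ref{lem:H1Jpres}.
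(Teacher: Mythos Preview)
Your proposal is correct and follows exactly the paper's own approach: the theorem is stated in the paper as an immediate consequence of Theorem~\ref{thm:free1} and Lemma~\ref{lem:H1Jpres} (via the preceding lemma that freeness is equivalent to $H^2(\cJ[K_4])=0$), and your write-up simply unwinds $V=K$ in terms of syzygies, just as the paper intends.
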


\begin{cor}\label{cor:FirstFreeCondition}
Let $K_4$ be labeled as in Figure~\ref{fig:K4}.  If $J(0123)$ is minimally generated by three of the six powers $x^a$, $y^b$, $z^c$, $(y-x)^d$, $(z-x)^e$, $(z-y)^f$, and these three correspond to all the edges adjacent to a single vertex, then $D(\A_{K_4},\m)$ is free.  Up to relabeling the vertices, this may be expressed by the three simultaneous inequalities $a+b\le d+1$, $a+c\le e+1, b+c\le f+1$.
\end{cor}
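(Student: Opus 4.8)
The plan is to reduce everything to Theorem~\ref{thm:FreeEquiv} by writing down the syzygies of $J(0123)$ explicitly. After relabeling we may assume the distinguished vertex is $v_0$, so the three powers in question are $x^a,y^b,z^c$, corresponding to the edges $\{0,1\},\{0,2\},\{0,3\}$. First I would record the translation into inequalities: by the binomial theorem $(y-x)^d=\sum_k\binom{d}{k}(-x)^{d-k}y^k$ lies in $\langle x^a,y^b\rangle$ precisely when no monomial survives, i.e.\ when there is no $k$ with $d-k<a$ and $k<b$, which is exactly $d\ge a+b-1$; the analogous computations for $(z-x)^e$ and $(z-y)^f$ give $e\ge a+c-1$ and $f\ge b+c-1$. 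Under these inequalities $J(0123)=\langle x^a,y^b,z^c\rangle$, and since $x,y,z$ is a regular sequence, so is $x^a,y^b,z^c$; in particular $\{x^a,y^b,z^c\}$ is a minimal generating set and the syzygies on it are generated by the three Koszul relations.

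Next, I would produce explicit ``local'' syzygies of the full generating set $g_1=x^a$, $g_2=y^b$, $g_3=z^c$, $g_4=(y-x)^d$, $g_5=(z-x)^e$, $g_6=(z-y)^f$. Writing $g_4=p\,g_1+q\,g_2$ (possible by the inequality above) yields a syzygy $s_4=(p,q,0,-1,0,0)$ supported on the coordinates indexed by $g_1,g_2,g_4$; these are exactly the three edges of the $2$-face $\{0,1,2\}$, so $s_4\in K_{012}$ in the notation of Lemma~\ref{lem:H1Jpres}. Similarly one obtains $s_5\in K_{013}$ supported on $g_1,g_3,g_5$ and $s_6\in K_{023}$ supported on $g_2,g_3,g_6$. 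Likewise each Koszul syzygy on a pair $g_i,g_j$ with $1\le i<j\le 3$ is supported on two edges incident to $v_0$, and these two edges lie in a common $2$-face, so the syzygy lies in the corresponding $K_\sigma$.

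The final step is the reduction. Given any syzygy $s=(a_1,\dots,a_6)$ on $g_1,\dots,g_6$, the combination $s+a_4 s_4+a_5 s_5+a_6 s_6$ is again a syzygy whose last three coordinates vanish, hence a syzygy of the form $(b_1,b_2,b_3,0,0,0)$ with $b_1g_1+b_2g_2+b_3g_3=0$. Because $x^a,y^b,z^c$ is a regular sequence, this is an $S$-linear combination of the Koszul syzygies on $g_1,g_2,g_3$. Therefore $s\in K_{012}+K_{013}+K_{023}\subseteq K$, so the syzygies of $J(0123)$ are locally generated and Theorem~\ref{thm:FreeEquiv} yields freeness. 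I do not expect a real obstacle here; the only point requiring care is the bookkeeping --- checking that each auxiliary syzygy is supported on the edge set of a single $2$-face, so that it genuinely lies in the relevant $K_\sigma$ and not merely in the global module $V$.
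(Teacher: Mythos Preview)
Your proof is correct and follows essentially the same route as the paper's own argument: both invoke Theorem~\ref{thm:FreeEquiv} and show that the syzygy module of $J(0123)$ on the six listed generators is generated by the three Koszul syzygies on $x^a,y^b,z^c$ together with the three ``expression'' syzygies of degrees $d,e,f$, each of which is supported on a single triangle and hence local. Your version simply spells out the reduction step (subtracting off $a_4s_4+a_5s_5+a_6s_6$ and then using that $x^a,y^b,z^c$ is a regular sequence) more explicitly than the paper does.
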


\begin{remark}
Corollary~\ref{cor:FirstFreeCondition} appears in~\cite[Corollary~5.12]{EulerMult}, where the multi-arrangements $(A_3,\m)$ with these multiplicities are additionally identified as \textit{inductively free} multi-arrangements.
\end{remark}

\begin{proof}
In this case, the module $\syz(J(0123))$ is generated by three Koszul syzygies and three relations of degree $d,e,f$, expressing $(y+z)^d$, $(x+z)^e$, $(x-y)^f$ in terms of $x^a$, $y^b$, $z^c$.  Each of the modules $\syz(J(012)),\syz(J(013)),\syz(J(023))$ contributes a Koszul syzygy and one of the syzygies of degree $d,e,f$, respectively.  Hence the syzygies on $J(0123)$ are generated by the syzygies on the four sub-ideals.  The result follows from Theorem~\ref{thm:FreeEquiv}.
\end{proof}

\begin{defn}\label{defn:FreeVertex}
We call a vertex $i$ satisfying the three inequalities $m_{jk}\ge m_{ij}+m_{ik}-1$ of Corollary~\ref{cor:FirstFreeCondition} a \textit{free vertex}; if one of $0,1,2,3$ is a free vertex then we say that $\m$ has a free vertex.
\end{defn}

\section{classification, part I}\label{sec:PartI}

In this section we prove the classification of Theorem~\ref{thm:intro} for multiplicities satisfying the inequalities $m_{ij}\le m_{ik}+m_{jk}+1$ for all choices of $i,j,k$ (giving a total of 12 irredundant inequalities).  The reason for imposing these inequalities is detailed at the beginning of Section~\ref{sec:disc}; briefly, these place restrictions on the degrees in which the syzygy modules $\syz(J(ijk))$ are generated.  The remaining multiplicities are considered in Section~\ref{sec:PartII}.  Sections~\ref{sec:PartI} and ~\ref{sec:PartII} taken together constitute the proof of Theorem~\ref{thm:intro}.

\subsection{Non-free $A_3$ multiplicities via Hilbert function evaluation}\label{sec:hf}
By Theorem~\ref{thm:FreeEquiv}, we may establish that the multiplicity $(A_3,\m)$ is not free by exhibiting a degree $d$ in which the Hilbert functions of $\sum \syz(J(ijk))$ and $\syz(J(0123))$ differ.  In general it may be quite difficult to determine these Hilbert functions; however, we are able to obtain bounds. Throughout, we adopt the convention that $\binom{A}{B}=0$ if $A<B$.

We begin by describing a lower bound on the global syzygies. From the exact sequence \[
0\rightarrow \syz(J(0123))\rightarrow \bigoplus_{i,j} S(-m_{ij})\rightarrow S\rightarrow S/J(0123)\rightarrow 0,
\]
we have \[
HF(\syz(J(0123)) + HF(S) = \left[\sum_{i,j} HF(S(-m_{ij}))\right] + HF(S/J(0123)).
\]
Computing the Hilbert function of the module $S/J(0123)$ is difficult, so we settle for the following inequality.

\begin{prop}\label{prop:HFGlobal}
For all $d$, \[
HF(\syz(J(0123),d) \ge \left[\sum_{i,j} \binom{(d-m_{ij})+2}{2}\right] - \binom{d+2}{2}.
\]
\end{prop}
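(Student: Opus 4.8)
The plan is to read off the inequality from the four-term exact sequence already displayed above,
\[
0\rightarrow \syz(J(0123))\rightarrow \bigoplus_{i,j} S(-m_{ij})\rightarrow S\rightarrow S/J(0123)\rightarrow 0,
\]
together with additivity of the Hilbert function. This sequence is exact by construction: the middle map sends the degree-$m_{ij}$ generator $[e_{ij}]$ of $S(-m_{ij})$ to $\alpha_{ij}^{m_{ij}}$, so its image is exactly $J(0123)$ (whence the cokernel is $S/J(0123)$), and its kernel is the module $\syz(J(0123))$ by definition. Splitting the sequence at the submodule $J(0123)\subset S$ into two short exact sequences and using that the Hilbert function is additive on short exact sequences of graded modules, we obtain, for every degree $d$,
\[
HF(\syz(J(0123)),d) + HF(S,d) = \Big[\textstyle\sum_{i,j} HF(S(-m_{ij}),d)\Big] + HF(S/J(0123),d).
\]

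Next I would evaluate the terms in the essential picture of Remark~\ref{rem:essA3}, where $S=\kk[x,y,z]$: then $HF(S,d)=\binom{d+2}{2}$ and $HF(S(-m_{ij}),d)=HF(S,d-m_{ij})=\binom{(d-m_{ij})+2}{2}$, the convention $\binom{A}{B}=0$ for $A<B$ correctly recording that $S(-m_{ij})$ vanishes below degree $m_{ij}$ (when $d<m_{ij}$ one has $(d-m_{ij})+2<2$). Substituting and rearranging gives the exact identity
\[
HF(\syz(J(0123)),d) = \Big[\textstyle\sum_{i,j} \binom{(d-m_{ij})+2}{2}\Big] - \binom{d+2}{2} + HF(S/J(0123),d).
\]
Since $S/J(0123)$ is a graded module we have $HF(S/J(0123),d)\ge 0$ for all $d$, and discarding this nonnegative term yields the claimed bound.

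There is no genuine obstacle in this argument; the only subtlety is bookkeeping with the binomial convention so that the bound is valid in small degrees as well. What is worth emphasizing is \emph{what} is being thrown away: the discarded term $HF(S/J(0123),d)$ is the Hilbert function of an Artinian quotient of $S$ by six powers of linear forms, whose precise values are exactly the hard quantity that the rest of the paper has to control. Settling here for the one-sided bound is what makes this proposition elementary while leaving the substantive combinatorial work for later sections.
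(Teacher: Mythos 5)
Your proposal is correct and is essentially identical to the paper's argument: the paper derives the same Hilbert-function identity from the displayed four-term exact sequence and then discards the nonnegative term $HF(S/J(0123),d)$ to get the bound. Your additional remarks about the binomial convention and about what is being thrown away are accurate but not a different method.
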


\begin{proof}
From above, \[
HF(\syz(J(0123)) \ge \left[\sum_{i,j} HF(S(-m_{ij})) \right] - HF(S).
\] Evaluating the right-hand side at $d$ gives the desired inequality.
\end{proof}

\begin{remark}
The bound in Proposition~\ref{prop:HFGlobal} can be improved (possibly made exact) by using inverse systems~\cite{IarrobinoI} to evaluate $\dim \syz(J(0123))_d$ exactly via a fat point computation.  This translates the ideal $J(0123)$ into a fat point ideal whose base locus is six points (corresponding to the six edges of $K_4$); these points are the intersection points of four generic lines (corresponding to the four triangles of $K_4$).  A complete classification of fat point ideals on six points, including their Hilbert function and minimal free resolution, appears in~\cite{SixFatPoints}.  Surprisingly, the weaker bound of Proposition~\ref{prop:HFGlobal} suffices for the classification of free multiplicities.
\end{remark}

Now we turn our attention to the local syzygies. The Hilbert functions of the syzygies on the individual $J(ijk)$ provide an upper bound on the Hilbert function of the local syzygy module:

\begin{prop}\label{prop:HFLocal}
\[
HF\left(\sum_{i,j,k} J(ijk) \right) \le \sum_{i,j,k} HF\left(J(ijk)\right).
\]
\end{prop}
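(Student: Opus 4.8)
The plan is to derive this inequality from the completely general fact that the Hilbert function is subadditive over sums of graded submodules, which is in turn a consequence of the surjectivity of the natural sum map. I would begin by recalling that for graded submodules $M_1,\dots,M_k$ of a graded $S$-module $N$, the submodule $M_1+\cdots+M_k$ is \emph{by definition} the image of the degree-preserving $S$-linear map
\[
\phi\colon \bigoplus_{t=1}^{k} M_t \longrightarrow N,\qquad (f_1,\dots,f_k)\longmapsto f_1+\cdots+f_k .
\]
Specializing $N=S$ and taking the four ideals $J(012),J(013),J(023),J(123)$ as the summands $M_t$, this produces a graded surjection $\phi\colon \bigoplus_{i,j,k} J(ijk)\twoheadrightarrow \sum_{i,j,k} J(ijk)$.

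Next I would fix a degree $d$ and restrict $\phi$ to the degree-$d$ strands, obtaining a surjection of finite-dimensional $\kk$-vector spaces $\bigoplus_{i,j,k} J(ijk)_d \twoheadrightarrow \bigl(\sum_{i,j,k} J(ijk)\bigr)_d$. Since a linear surjection cannot raise dimension, and since the Hilbert function of a direct sum is the sum of the Hilbert functions of its summands, I would conclude
\[
HF\!\left(\sum_{i,j,k} J(ijk),\,d\right)\;\le\;\dim_\kk\bigoplus_{i,j,k} J(ijk)_d\;=\;\sum_{i,j,k} HF\bigl(J(ijk),\,d\bigr).
\]
As $d$ was arbitrary, this is exactly the asserted bound.

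I do not expect any genuine obstacle here: the statement is pure subadditivity, and the only inputs are the surjectivity of $\phi$ and additivity of Hilbert functions over direct sums, neither of which uses the specific shape of the ideals $J(ijk)$. The one point worth flagging is that $\phi$ is generally far from injective, so the inequality is typically strict. Indeed, each of the six powers $x^a,y^b,z^c,(y-x)^d,(z-x)^e,(z-y)^f$ lies in exactly two of the four ideals, since every edge of $K_4$ belongs to two triangles; consequently $\sum_{i,j,k} J(ijk)=J(0123)$, so the left-hand side is in fact $HF(J(0123),d)$, while the shared generators are double-counted on the right. Measuring this defect — equivalently, controlling $\ker\phi$ and the overlaps $J(\sigma)\cap J(\sigma')$ — is what will be needed when this local upper bound is later compared against the global lower bound of Proposition~\ref{prop:HFGlobal}; that comparison, rather than the present inequality, is where the real work lies.
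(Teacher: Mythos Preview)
Your subadditivity argument is correct, and it is exactly how the paper treats the proposition: no proof is given there, the inequality being an immediate consequence of the surjection $\bigoplus_t M_t \twoheadrightarrow \sum_t M_t$ in each graded piece.

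One caveat, though: the displayed statement in the paper is almost certainly a typo. The sentence immediately preceding it (``The Hilbert functions of the syzygies on the individual $J(ijk)$ provide an upper bound on the Hilbert function of the local syzygy module'') and its use in Theorem~\ref{thm:HFBounds} make clear that the intended inequality is
\[
HF\!\left(\sum_{i,j,k} \syz J(ijk),\,d \right) \le \sum_{i,j,k} HF\bigl(\syz J(ijk),\,d\bigr),
\]
with the sum taken inside the free module $\bigoplus_{\tau\in E(K_4)} S[e_\tau]$. Your general argument handles this just as well (take $N=\bigoplus_\tau S[e_\tau]$ rather than $N=S$), so the proof stands. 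But your closing paragraph is misdirected: the left-hand side is not $HF(J(0123))$ but rather $HF(K,d)$, where $K=\sum_\sigma K_\sigma$ is the module of locally generated syzygies from Lemma~\ref{lem:H1Jpres}, and the comparison with Proposition~\ref{prop:HFGlobal} is between $HF(K,d)$ and $HF(\syz J(0123),d)$, not between Hilbert functions of the ideals themselves.
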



The computation of the Hilbert functions of the individual local syzygy modules is more technical and is done by Schenck \cite{FatPoints}, which we cite below in Lemma~\ref{lem:TriangleSyzygies}.

\begin{remark}
Our intuition for Schenck's result below is the following. Observe that $J(012)$ is isomorphic to $\langle x^a, y^b, (y-x)^c \rangle$. We study $\kk[x,y]/\langle x^a, y^b, (y-x)^c \rangle$, which is isomorphic to \[\left.\left(\frac{\kk[x,y]}{\langle x^a,y^b\rangle}\right)\right/\langle(y-x)^c\rangle.\] Lemma~\ref{lem:TriangleSyzygies} is equivalent to the statement that in this quotient ring, $(y-x)^c$ is a Lefschetz element (i.e., multiplication by this element is either injective or surjective). The Hilbert function increases as the degree decreases from the socle degree ($a+b-2$) to $\lceil(a+b-2)/2\rceil$.  On the other hand, since $(y-x)^c$ is a Lefschetz element, the Hilbert function of the ideal $\langle(y-x)^c\rangle$ in this quotient ring is 1 in degree $c$ and increases with the degree as long as possible. By the Hilbert-Burch Theorem, there are two minimal first syzygies. Their degrees are where the ideal's Hilbert function would exceed that of the ring. Unfortunately, these degrees depend on the parity of $a$, $b$, and $c$. The two mysterious quantities in the statement of Lemma~\ref{lem:TriangleSyzygies}, $\Omega_{ijk}$ and $a_{ijk}$, encode the parity cases simultaneously.
\end{remark}

The following lemma is an immediate consequence of \cite[Theorem~2.7]{FatPoints}.

\begin{lem}\label{lem:TriangleSyzygies}
Let $J(ijk)=\langle (x_i-x_j)^{m_{ij}},(x_i-x_k)^{m_{ik}},(x_j-x_k)^{m_{jk}} \rangle\subset S$.  Set
\[
\Omega_{ijk}=\left\lfloor\dfrac{m_{ij}+m_{jk}+m_{ik}-3}{2}\right\rfloor+1
\]
and $a_{ijk}=m_{ij}+m_{jk}+m_{ik}-2\Omega_{ijk}$.  Then, if $(x_i-x_j)^{m_{ij}}$, $(x_i-x_k)^{m_{ik}}$, and $(x_j-x_k)^{m_{jk}}$ are a minimal generating set,
\[
\syz(J(ijk))\cong S(-\Omega_{ijk}-1)^{a_{ijk}}\oplus S(-\Omega_{ijk})^{2-a_{ijk}}.
\]
Otherwise, suppose without loss of generality that $m_{ij}+m_{jk}\le m_{ik}+1$.  Then
\[
\syz(J(ijk))\cong S(-m_{ik})\oplus S(-m_{ij}-m_{jk}).
\]
\end{lem}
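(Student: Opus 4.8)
The plan is to deduce Lemma~\ref{lem:TriangleSyzygies} from the structure of the ring $A=\kk[u,v]/\langle u^{m_{ij}}, v^{m_{jk}}, (u-v)^{m_{ik}}\rangle$, where I identify $J(ijk)$ with the ideal $\langle u^{m_{ij}}, v^{m_{jk}}, (u-v)^{m_{ik}}\rangle$ in a polynomial ring in two variables (this is legitimate because, after a linear change of coordinates, $x_i-x_j$, $x_j-x_k$ are independent and $x_i-x_k$ is their difference, and the extra variables play no role in the syzygies). The key input is that $J(ijk)$ is an ideal generated by powers of linear forms in two variables, hence its quotient ring, when the generators are minimal, is an Artinian complete intersection-like object; more precisely, by the Hilbert--Burch theorem a height-two perfect ideal in $\kk[u,v]$ minimally generated by three elements has a free resolution $0\to S(-b_1)\oplus S(-b_2)\to S(-m_{ij})\oplus S(-m_{jk})\oplus S(-m_{ik})\to S\to S/J(ijk)\to 0$, and the numerical constraint from the resolution forces $b_1+b_2 = m_{ij}+m_{jk}+m_{ik}$. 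So the entire content is to pin down $\{b_1,b_2\}$.

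To pin down the syzygy degrees I would invoke the fact---this is exactly \cite[Theorem~2.7]{FatPoints}, which the lemma is cited as an immediate consequence of---that $(u-v)^{m_{ik}}$ acts as a Lefschetz-type element on $\kk[u,v]/\langle u^{m_{ij}}, v^{m_{jk}}\rangle$. Concretely, $\kk[u,v]/\langle u^{m_{ij}},v^{m_{jk}}\rangle$ has Hilbert function that is the ``staircase'' of an $m_{ij}\times m_{jk}$ box, symmetric about socle degree $m_{ij}+m_{jk}-2$ and unimodal; multiplication by $(u-v)^{m_{ik}}$ from degree $t$ to degree $t+m_{ik}$ has maximal rank for every $t$. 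This maximal-rank property determines precisely in which degrees new syzygies on the three generators appear: a syzygy of degree $b$ appears exactly when the map from $A_{b-m_{ik}-(\text{shift})}$ fails to be injective, equivalently when the naive expected Hilbert function of $S/J(ijk)$ would go negative. A direct count of where the box-staircase minus its shifted copy first vanishes, done carefully with the floor function, yields $b_1=\Omega_{ijk}$ or $\Omega_{ijk}+1$, and the number of each is governed by $a_{ijk}=m_{ij}+m_{jk}+m_{ik}-2\Omega_{ijk}\in\{0,1,2\}$, which is just the parity bookkeeping: $b_1+b_2=m_{ij}+m_{jk}+m_{ik}=2\Omega_{ijk}+a_{ijk}$ together with $b_2-b_1\in\{0,1\}$ (the two syzygy degrees differ by at most one, again a consequence of maximal rank) forces $\syz(J(ijk))\cong S(-\Omega_{ijk}-1)^{a_{ijk}}\oplus S(-\Omega_{ijk})^{2-a_{ijk}}$.

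For the non-minimal case, suppose $m_{ij}+m_{jk}\le m_{ik}+1$. Then $(u-v)^{m_{ik}}\in\langle u^{m_{ij}},v^{m_{jk}}\rangle$ already, since $\langle u^{m_{ij}}, v^{m_{jk}}\rangle$ contains everything of degree $\ge m_{ij}+m_{jk}-1$, and $m_{ik}\ge m_{ij}+m_{jk}-1$. So $J(ijk)=\langle u^{m_{ij}},v^{m_{jk}}\rangle$ is a complete intersection on a regular sequence, and $\syz$ of the three given (non-minimal) generators is generated by the Koszul syzygy on $u^{m_{ij}},v^{m_{jk}}$ of degree $m_{ij}+m_{jk}$ together with the relation of degree $m_{ik}$ expressing $(u-v)^{m_{ik}}$ in terms of the other two; a short argument (e.g.\ comparing against the resolution of the complete intersection, or noting the mapping-cone structure) shows these two generate freely, giving $\syz(J(ijk))\cong S(-m_{ik})\oplus S(-m_{ij}-m_{jk})$.

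The main obstacle is the parity bookkeeping in the minimal case: translating ``maximal rank of multiplication by $(u-v)^{m_{ik}}$ on the box-staircase'' into the exact closed form with the floor function $\Omega_{ijk}=\lfloor (m_{ij}+m_{jk}+m_{ik}-3)/2\rfloor+1$ requires splitting on the parity of $m_{ij}+m_{jk}+m_{ik}$ and checking that the first degree in which the shifted staircase overtakes the original is $\Omega_{ijk}$ (with one versus two syzygies there according to $a_{ijk}$). Since this is precisely the content of \cite[Theorem~2.7]{FatPoints}, I would state the translation and cite that theorem rather than reproving the maximal-rank statement, which would otherwise be the genuinely technical step.
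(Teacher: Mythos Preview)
Your proposal is correct and follows essentially the same approach as the paper: for the minimal-generating-set case you invoke \cite[Theorem~2.7]{FatPoints} (the paper simply cites this without unpacking the Hilbert--Burch/Lefschetz intuition you supply), and for the non-minimal case you argue exactly as the paper does that the redundant generator contributes a degree-$m_{ik}$ relation together with the Koszul syzygy on the two minimal generators. The only cosmetic slip is that $x_i-x_k=(x_i-x_j)+(x_j-x_k)$ is the \emph{sum}, not the difference, of your $u$ and $v$, but this is harmless after replacing $v$ by $-v$.
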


\begin{remark}\label{rem:ExponentFudge}
We remark for later use that if $m_{ij}\le m_{ik}+m_{jk}+1$ for all $i,j,k$ then
\[
\syz(J(ijk))\cong S(-\Omega_{ijk}-1)^{a_{ijk}}\oplus S(-\Omega_{ijk})^{2-a_{ijk}},
\]
in other words, even if $(x_i-x_j)^{m_{ij}}$, $(x_i-x_k)^{m_{ik}}$, and $(x_j-x_k)^{m_{jk}}$ are not quite a minimal generating set for $J(ijk)$, the Betti numbers for $\syz(J(ijk))$ are the same as if they were.
\end{remark}

\begin{proof}
If $(x_i-x_j)^{m_{ij}},(x_i-x_k)^{m_{ik}},(x_j-x_k)^{m_{jk}}$ are a minimal generating set, then the minimal free resolution of $J(ijk)$ has the form
\[ 
0\rightarrow S(-\Omega_{ijk}-1)^{a_{ijk}} \oplus S(-\Omega_{ijk})^{2-a_{ijk}} \xrightarrow{\phi} S(-m_{ij})\oplus S(-m_{ik})\oplus S(-m_{jk})
\]
by~\cite[Theorem~2.7]{FatPoints}.  Otherwise, if $m_{ij}+m_{jk}\le m_{ik}+1$ then $J(ijk)$ is generated by $(x_i-x_j)^{m_{ij}},(x_j-x_k)^{m_{jk}}$.  So the syzygies on the generators $(x_i-x_j)^{m_{ij}},(x_i-x_k)^{m_{ik}},(x_j-x_k)^{m_{jk}}$ are given by the Koszul syzygy on $(x_i-x_j)^{m_{ij}},(x_j-x_k)^{m_{jk}}$ and a syzygy of degree $m_{ik}$ (expressing $(x_i-x_k)^{m_{ik}}$ as a polynomial combination of $(x_i-x_j)^{m_{ij}},(x_j-x_k)^{m_{jk}}$).  See Remark~\ref{rem:syzygies}.
\end{proof}

\begin{remark}
	Since the module $\syz J(012)$ can be identified with the non-trivial derivations on the multi-arrangement $(A_2,\m)=(\A_{K_3},\m)$ (see Example~\ref{ex:ThreeCycleHomologies}), Lemma~\ref{lem:TriangleSyzygies} also follows from a result of Wakamiko~\cite{Wakamiko} on the exponents of the multi-arrangement $(A_2,\m)$.
\end{remark}

Combining the local and global bounds above, we produce a criterion for non-freeness of the multi-arrangement $(A_3,\m)$.  Define the function $LB(\m,d)$ by
\[
\begin{array}{rl}
LB(\m,d)= & \left[\sum\limits_{i,j}\dbinom{d+2-m_{ij}}{2}\right]-\dbinom{d+2}{2}-\sum\limits_{i,j,k} HF(\syz(J(ijk)),d)\\
=& 3\dbinom{d+2}{2}-\left[\sum\limits_{i,j}\dbinom{d+2-m_{ij}}{2}\right]-\left[\sum\limits_{i,j,k} HF(S/J(ijk),d)\right].
\end{array}
\]
The two different expressions for $LB(\m,d)$ are the same; this is immediate from the exact sequence
\[
0 \rightarrow \syz(J(ijk))\rightarrow S(-m_{ij})\oplus S(-m_{ik}) \oplus S(-m_{jk}) \rightarrow S \rightarrow S/J(ijk) \rightarrow 0,
\]
which holds for each $i,j,k$.

\begin{thm}\label{thm:HFBounds}
We have 
\[
HF(\syz J(0123),d) - HF\left(\sum_{i,j,k} \syz J(ijk),d \right) \ge LB(\m,d).
\] 
In particular, if $LB(\m,d)>0$ for any integer $d\ge 0$, then $(A_3,\m)$ is not free.
\end{thm}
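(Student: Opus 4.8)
The plan is to read the inequality straight off the two preceding propositions. First I would evaluate the global lower bound of Proposition~\ref{prop:HFGlobal} in degree $d$, namely
\[
HF(\syz J(0123),d) \ge \left[\sum_{i,j}\binom{d+2-m_{ij}}{2}\right] - \binom{d+2}{2},
\]
and then apply Proposition~\ref{prop:HFLocal} to the syzygy modules $\syz J(ijk)$ to get
\[
HF\left(\sum_{i,j,k}\syz J(ijk),d\right) \le \sum_{i,j,k} HF(\syz J(ijk),d).
\]
Subtracting the second inequality from the first yields
\[
HF(\syz J(0123),d) - HF\left(\sum_{i,j,k}\syz J(ijk),d\right) \ge \left[\sum_{i,j}\binom{d+2-m_{ij}}{2}\right] - \binom{d+2}{2} - \sum_{i,j,k} HF(\syz J(ijk),d),
\]
and the right-hand side is exactly $LB(\m,d)$ by the first of the two displayed formulas defining it. (The equality of that formula with the second formula for $LB(\m,d)$ is immediate from the four-term exact sequence $0\to\syz J(ijk)\to S(-m_{ij})\oplus S(-m_{ik})\oplus S(-m_{jk})\to S\to S/J(ijk)\to 0$ quoted just before the theorem, so it needs no separate argument.)

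For the ``in particular'' statement I would argue by contraposition. Suppose $(A_3,\m)$ is free. By Theorem~\ref{thm:FreeEquiv} — equivalently, by the isomorphism $H^2(\cJ[K_4])\cong V/K$ of Lemma~\ref{lem:H1Jpres} together with the fact that freeness forces $H^2(\cJ[K_4])=0$ — the global syzygy module $\syz J(0123)$ coincides with its locally generated submodule $\sum_{i,j,k}\syz J(ijk)$. Since the latter is always contained in the former as graded submodules of the same free module, their equality forces $HF(\syz J(0123),d) = HF(\sum_{i,j,k}\syz J(ijk),d)$ in every degree $d$. Hence, when $(A_3,\m)$ is free, the left-hand side of the main inequality vanishes identically, so $LB(\m,d)\le 0$ for all $d$; contrapositively, if $LB(\m,d)>0$ for some integer $d\ge 0$, then $(A_3,\m)$ is not free.

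I do not expect a genuine obstacle here: the theorem is bookkeeping layered on top of Propositions~\ref{prop:HFGlobal} and~\ref{prop:HFLocal} and Theorem~\ref{thm:FreeEquiv}. The only points needing a moment of care are checking that the expression produced by the subtraction matches the stated formula for $LB(\m,d)$ on the nose, and noting that the convention $\binom{A}{B}=0$ for $A<B$ keeps the global bound valid for every $d\ge 0$, so that no hypothesis on the $m_{ij}$ enters at this stage. The substantive work — picking a degree $d$ in which $LB(\m,d)$ is actually positive and estimating $HF(\syz J(ijk),d)$ via the explicit Betti numbers of Lemma~\ref{lem:TriangleSyzygies} — is what is carried out in Sections~\ref{sec:PartI} and~\ref{sec:PartII}.
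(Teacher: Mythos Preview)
Your proposal is correct and follows exactly the paper's own argument: the inequality is obtained by combining Propositions~\ref{prop:HFGlobal} and~\ref{prop:HFLocal}, and the non-freeness criterion then follows from Theorem~\ref{thm:FreeEquiv} since freeness forces the global and locally generated syzygy modules to coincide. The paper's proof is just a terser version of what you wrote.
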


\begin{proof}
  The inequality 
\[HF(\syz J(0123),d) - HF\left(\sum_{i,j,k} \syz J(ijk),d \right) \ge LB(\m,d)\]
follows immediately from Propositions~\ref{prop:HFGlobal} and~\ref{prop:HFLocal}.  By Theorem~\ref{thm:FreeEquiv}, $\m$ is a free multiplicity on $A_3$ if and only if
\[
HF(\syz J(0123),d) - HF\left(\sum_{i,j,k} \syz J(ijk),d \right)=0
\]
for all $d\ge 0$.
\end{proof}

\subsection{Non-free multiplicities via discriminant} \label{sec:disc}

The function $LB(\m,d)$ from Theorem~\ref{thm:HFBounds} is eventually polynomial in $d$. Denote the Hilbert polynomial by $\widetilde{LB}(\m,d)$; this is quadratic with leading coefficient $-3/2$.

In this section we assume that all of the ideals $J(ijk)$ are `close to' minimally generated by their three generators.  Explicitly, we impose the inequalities $m_{ij}\le m_{ik}+m_{jk}+1$ for all choices of $i,j,k$ (giving a total of 12 irredundant inequalities).  Forllowing Remark~\ref{rem:ExponentFudge} it is straightforward to check that under these assumptions, $\syz(J(ijk))$ is generated in degrees $\Omega_{ijk}+1,\Omega_{ijk}+1$ if $m_{ij}+m_{ik}+m_{jk}$ is even and degrees $\Omega_{ijk},\Omega_{ijk}+1$ if $m_{ij}+m_{ik}+m_{jk}$ is odd, where the constants $\Omega_{ijk}$ are as in Lemma~\ref{lem:TriangleSyzygies}.  Set $I=\{0,1,2,3\}$.  We have

\begin{multline*}
LB(\m,d)=\left[\sum\limits_{\{i,j\}\subset I} \dbinom{d+2-m_{ij}}{2}\right]-\dbinom{d+2}{2}\\
-\sum\limits_{\{i,j,k\}\subset I} \left(\dbinom{d+1-\Omega_{ijk}}{2}+\dbinom{d+2-\Omega_{ijk}}{2}\right).
\end{multline*}

\begin{lem}\label{lem:QuadraticMaximum}
Let $|\m|=\sum m_{ij}$.  The polynomial $\widetilde{LB}(\m,d)$ attains its maximum value at
\[
d_{max}=\frac{1}{6}\left(2|\m|-9\right).
\]
Furthermore, assume $\m$ does not have a free vertex.  Then $LB(\m,d)=\widetilde{LB}(\m,d)$ for $d\ge \lfloor d_{max}\rfloor$.
\end{lem}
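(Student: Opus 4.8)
The statement has two parts. The first part — that $\widetilde{LB}(\m,d)$ attains its maximum at $d_{max}=\frac{1}{6}(2|\m|-9)$ — is a direct calculus-style computation: since $\widetilde{LB}(\m,d)$ is the eventual polynomial agreeing with $LB(\m,d)$, one writes it out using the three binomial sums, treating each $\binom{d+c}{2}=\frac{(d+c)(d+c-1)}{2}$ as an honest quadratic in $d$, and differentiates. The leading term is $-\tfrac{3}{2}d^2$ (six terms $+\tfrac12 d^2$ from the $\binom{d+2-m_{ij}}{2}$, minus one term $\tfrac12 d^2$ from $\binom{d+2}{2}$, minus eight terms $\tfrac12 d^2$ from the four pairs $\binom{d+1-\Omega_{ijk}}{2}+\binom{d+2-\Omega_{ijk}}{2}$, for a net of $6-1-8=-3$ halves). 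The linear coefficient is collected from the shifts: $\sum_{ij}(3/2-m_{ij}) - 3/2 - \sum_{ijk}\big((3/2-\Omega_{ijk})+(5/2-\Omega_{ijk})\big)$, and using $\sum_{ijk}\Omega_{ijk}$ together with the identity $\sum_{ijk}(m_{ij}+m_{ik}+m_{jk})=3|\m|$ (each edge lies in two triangles... wait, each edge of $K_4$ lies in exactly two triangles, so this sum is $2|\m|$; I would double-check this combinatorial count carefully) one reduces the vertex of the parabola to $d_{max}=\frac{2|\m|-9}{6}$. The parity-dependent floors in $\Omega_{ijk}$ only affect the constant term of $\widetilde{LB}$, not the vertex, so the formula for $d_{max}$ is clean; I would note this explicitly.

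**Second part.** The claim $LB(\m,d)=\widetilde{LB}(\m,d)$ for $d\ge\lfloor d_{max}\rfloor$ requires controlling when the ``honest polynomial'' $\widetilde{LB}$ actually coincides with the integer-valued function $LB$, i.e., when all the conventions $\binom{A}{B}=0$ for $A<B$ are vacuous. Concretely, $\binom{d+2-m_{ij}}{2}$ equals its polynomial value once $d+2-m_{ij}\ge 1$, i.e. $d\ge m_{ij}-1$; and $\binom{d+1-\Omega_{ijk}}{2}$ equals its polynomial value once $d\ge \Omega_{ijk}-1$. So the plan is: show that $\lfloor d_{max}\rfloor \ge m_{ij}-1$ for every edge $\{i,j\}$, and $\lfloor d_{max}\rfloor\ge\Omega_{ijk}-1$ for every triangle — actually, since $\binom{d+2-m_{ij}}{2}$ is the relevant ``largest'' local term, it suffices to check $d_{max}\ge \max_{ij} m_{ij}-1$ and $d_{max}\ge\max_{ijk}\Omega_{ijk}-1$ (the latter follows from the former, roughly, since $\Omega_{ijk}\le\frac{m_{ij}+m_{ik}+m_{jk}-1}{2}$ which under the triangle inequalities $m_{ij}\le m_{ik}+m_{jk}+1$ is comparable to the edge multiplicities). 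The key inequality is therefore $\frac{2|\m|-9}{6}\ge m_{\max}-1$, i.e. $2|\m|\ge 6m_{\max}-3$, i.e. $|\m|\ge 3m_{\max}-\tfrac32$, i.e. (integrality) $|\m|\ge 3m_{\max}-1$. This is where the hypothesis ``$\m$ has no free vertex'' enters: if, say, $\{i,j\}$ is the edge of largest multiplicity, then $m_{ij}=m_{\max}$, and $\m$ having no free vertex at $i$ and no free vertex at $j$ gives inequalities of the form $m_{ik}+m_{jk}\ge m_{ij}-1 = m_{\max}-1$ ... hmm, I need to be careful: the free-vertex condition is $m_{jk}\ge m_{ij}+m_{ik}-1$; its negation for vertex $i$ says $m_{jk}<m_{ij}+m_{ik}-1$ for *some* pair, which is the wrong direction. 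The correct reading: vertex $i$ is NOT free means there exist $j,k$ with $m_{jk}< m_{ij}+m_{ik}-1$, equivalently $m_{ij}+m_{ik}\ge m_{jk}+2$. I would instead argue from the opposite side — large $|\m|$ forces a free vertex — so the contrapositive ``no free vertex $\Rightarrow |\m|$ not too small relative to $m_{\max}$'' needs the *assumed* triangle inequalities $m_{ij}\le m_{ik}+m_{jk}+1$ combined with the no-free-vertex hypothesis to bound the three edges at a vertex against each other, summing to get $|\m|$ large enough. This is the technical crux.

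**Main obstacle.** The genuinely delicate step is the second part: establishing the numeric inequality $\lfloor d_{max}\rfloor\ge m_{\max}-1$ (together with its $\Omega$-analogue) from the no-free-vertex hypothesis and the standing triangle inequalities. I expect the argument to go: let $m_{ij}=m_{\max}$ be attained on edge $\{i,j\}$; since vertex $i$ is not free, some pair gives $m_{ij}+m_{i\ell}\ge m_{j\ell}+2$ — but actually the cleaner route is that since neither $i$ nor $j$ is a free vertex, one gets lower bounds on the four ``other'' edges $m_{ik},m_{i\ell},m_{jk},m_{j\ell}$ or on $m_{k\ell}$ in terms of $m_{\max}$ via the standing inequalities $m_{\max}=m_{ij}\le m_{ik}+m_{jk}+1$ and $m_{ij}\le m_{i\ell}+m_{j\ell}+1$, which already give $m_{ik}+m_{jk}\ge m_{\max}-1$ and $m_{i\ell}+m_{j\ell}\ge m_{\max}-1$; adding these to $m_{ij}=m_{\max}$ yields $|\m|\ge m_{k\ell} + 3m_{\max}-2 \ge 3m_{\max}-2$, hence $2|\m|\ge 6m_{\max}-4 \ge 6m_{\max}-3$ only if we can squeeze one more, which is exactly where no-free-vertex (giving $m_{k\ell}\ge 1$ or a sharper bound) finishes it; I would pin down this last $+1$ carefully, as it is the entire content of invoking the hypothesis. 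Once the inequality on $\lfloor d_{max}\rfloor$ is in hand, the conclusion $LB=\widetilde{LB}$ on $[\lfloor d_{max}\rfloor,\infty)$ is immediate from the binomial-convention bookkeeping above.
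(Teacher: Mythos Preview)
Your Part~1 is essentially the same as the paper's: write $\widetilde{LB}(\m,d)=Ad^2+Bd+C$ with $A=-3/2$, $B=|\m|-9/2$, and read off $d_{\max}=-B/(2A)=(2|\m|-9)/6$. Fine.

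Part~2 has a correct overall strategy but two real problems.

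\textbf{Off-by-one.} The threshold at which $\binom{d+2-m_{ij}}{2}$ agrees with its polynomial extension is $d\ge m_{ij}-2$, not $m_{ij}-1$: the polynomial $\tfrac{n(n-1)}{2}$ already vanishes at $n=0$ and $n=1$, so the convention $\binom{n}{2}=0$ for $n<2$ first disagrees at $n=-1$. So you need $\lfloor d_{\max}\rfloor\ge m_{ij}-2$ (edges) and $\lfloor d_{\max}\rfloor\ge \Omega_{ijk}-1$ (triangles). Your own chain $|\m|\ge m_{k\ell}+3m_{\max}-2\ge 3m_{\max}-1$ already gives $2|\m|\ge 6m_{\max}-2$, hence $\lfloor d_{\max}\rfloor\ge m_{\max}-2$, using only the standing triangle inequalities and $m_{k\ell}\ge 1$. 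No free-vertex hypothesis is needed here, and the paper does not use it here either.

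\textbf{The actual gap.} You assert that the triangle bound $\lfloor d_{\max}\rfloor\ge\Omega_{ijk}-1$ ``follows from the former, roughly.'' It does not. For instance with $m_{ij}=m_{ik}=m_{jk}=m$ one has $\Omega_{ijk}\approx \tfrac{3m}{2}$, strictly larger than any edge multiplicity, so bounding $\lfloor d_{\max}\rfloor$ against $m_{\max}-2$ says nothing about $\Omega_{ijk}-1$. This inequality is the genuine content of the lemma and is exactly where the no-free-vertex hypothesis enters. The paper's argument: the standing inequalities give $2(m_{03}+m_{13}+m_{23})\ge (m_{01}+m_{02}+m_{12})-3$, hence $2|\m|\ge 3(m_{01}+m_{02}+m_{12})-3$; one then argues by contradiction that if $2|\m|<3(m_{01}+m_{02}+m_{12})$ then the three quantities $(m_{13}+m_{03}-m_{01})$, $(m_{03}+m_{23}-m_{02})$, $(m_{13}+m_{23}-m_{12})$ sum to something negative while each is $\ge -1$, forcing each $\le 1$, which makes vertex $3$ free. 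So in fact $2|\m|\ge 3(m_{01}+m_{02}+m_{12})$, and this yields $\lfloor d_{\max}\rfloor\ge\Omega_{012}-1$. You have located the hypothesis in the wrong inequality and waved away the one that actually needs it.
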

\begin{proof}
Using the second expression for $LB(\m,d)$ (just prior to Theorem~\ref{thm:HFBounds}) and expanding the binomial coefficients as polynomials in $d$, we see that $\widetilde{LB}(\m,d)$ is a quadratic polynomial $Ad^2+Bd+C$ with
\begin{itemize}
	\item $A=-3/2$
	\item $B=-9/2+|\m|$
	\item $C=3-\sum_{ij}\binom{m_{ij}-1}{2}-\sum_{ijk} HP(S/J(ijk),d)$,
\end{itemize}
where $HP(S/J(ijk),d)$ is the Hilbert polynomial of $S/J(ijk)$ (since $S/J(ijk)$ is zero-dimensional as a scheme over $\mathbb{P}^2$, this is a constant).  It follows immediately that $\widetilde{LB}(\m,d)$ achieves its maximum at $d_{max}=(2|\m|-9)/6$.  For the second claim, it suffices to show that
\begin{enumerate}
\item $\lfloor d_{max}\rfloor \ge m_{ij}-2$ for all $i,j$, and
\item $\lfloor d_{max}\rfloor \ge \Omega_{ijk}-1$ for all $i,j,k$.
\end{enumerate}
For the first inequality, assume without loss of generality that $\{i,j\}=\{0,1\}$.  We have
\[
\begin{array}{rl}
2m_{23} & \ge 2\\
2(m_{03}+m_{13}) & \ge 2(m_{01}-1)\\
2(m_{02}+m_{12}) & \ge 2(m_{01}-1)\\
2m_{01} & \ge 2m_{01}.
\end{array}
\]
Summing down this list of inequalities yields $2|\m|\ge 6m_{01}-2$, so
\[
\lfloor d_{max}\rfloor =\left\lfloor \frac{1}{6}\left(2|\m|-9\right)\right\rfloor \ge \left\lfloor m_{01}-\frac{11}{6}\right\rfloor = m_{01}-2.
\]
For the second inequality, assume without loss of generality that $\{i,j,k\}=\{0,1,2\}$.  We have
\[
\begin{array}{rl}
2(m_{01}+m_{02}+m_{12}) & \ge 2(m_{01}+m_{02}+m_{12})\\
m_{13}+m_{03} & \ge m_{01}-1\\
m_{03}+m_{23} & \ge m_{02}-1\\
m_{13}+m_{23} & \ge m_{12}-1.
\end{array}
\]
Summing down this list we obtain $2|\m|\ge 3(m_{01}+m_{02}+m_{12})-3$.  In fact, we will show that $2|\m|\ge 3(m_{01}+m_{02}+m_{12})$.  Assume to the contrary that $2|\m|< 3(m_{01}+m_{02}+m_{12})$; then $2(m_{03}+m_{13}+m_{23})<m_{01}+m_{02}+m_{12}$.  Rearranging yields
\[
(m_{13}+m_{03}-m_{01})+(m_{03}+m_{23}-m_{02})+(m_{13}+m_{23}-m_{12})<0.
\]
According to the displayed inequalities above, each of the three parenthesized terms in the above sum is at least $-1$.  Consequently, each of these terms must be at most $1$, i.e. $m_{01}\ge m_{13}+m_{03}-1$, $m_{02}\ge m_{03}+m_{23}-1$, and $m_{12}\ge m_{13}+m_{23}-1$.  But then $3$ is a free vertex.  

So, assuming $\m$ does not have a free vertex, we have $2|\m|\ge 3(m_{01}+m_{02}+m_{12})$.  Hence
\[
\lfloor d_{max}\rfloor =\left\lfloor \frac{1}{6}\left(2|\m|-9\right)\right\rfloor\ge \left\lfloor\dfrac{m_{01}+m_{02}+m_{12}-1}{2}\right\rfloor-1=\Omega_{012}-1.
\qedhere
\]
\end{proof}

\begin{lem}\label{lem:Discriminant}
Let $D$ be the discriminant of the quadratic polynomial $\widetilde{LB}(\m,d)$ in the variable $d$.
\begin{enumerate}
\item If $D^2-9/4>0$, then $(A_3,\m)$ is not free.
\item If $|\m| \not \equiv 0 \pmod{3}$ and $D^2-1/4>0$, then $(A_3,\m)$ is not free.
\end{enumerate}
\end{lem}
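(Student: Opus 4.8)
The plan is to read off non-freeness from the numerical criterion of Theorem~\ref{thm:HFBounds}: it suffices to produce a single integer $d\ge 0$ with $LB(\m,d)>0$. First I would set aside the case in which $\m$ has a free vertex, where $(A_3,\m)$ is free by Corollary~\ref{cor:FirstFreeCondition}, so that one only needs to check that the hypotheses of (1) and (2) cannot hold there (equivalently, the lemma is only invoked in Sections~\ref{sec:PartI}--\ref{sec:PartII} when $\m$ has no free vertex). So assume $\m$ has no free vertex. Then Lemma~\ref{lem:QuadraticMaximum} gives $LB(\m,d)=\widetilde{LB}(\m,d)$ for every integer $d\ge\lfloor d_{max}\rfloor$, where $d_{max}=\tfrac16(2|\m|-9)$; since every $m_{ij}\ge 1$ we have $|\m|\ge 6$, hence $d_{max}\ge\tfrac12$ and $\lfloor d_{max}\rfloor\ge 0$. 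Thus any integer $n$ with $\lfloor d_{max}\rfloor\le n\le\lceil d_{max}\rceil$ satisfies $n\ge 0$, $n\ge\lfloor d_{max}\rfloor$, and $LB(\m,n)=\widetilde{LB}(\m,n)$, so it is enough to find such an $n$ with $\widetilde{LB}(\m,n)>0$.

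Next I would use the concavity of $\widetilde{LB}(\m,\cdot)$: since the leading coefficient is $-\tfrac32$, completing the square gives $\widetilde{LB}(\m,d)=-\tfrac32(d-d_{max})^2+\tfrac16 D^2$, so the maximum value of $\widetilde{LB}(\m,\cdot)$, attained at $d_{max}$, equals $\tfrac16 D^2$. Consequently, for any integer $n$ with $|n-d_{max}|\le\delta$ we get $\widetilde{LB}(\m,n)\ge \tfrac16 D^2-\tfrac32\delta^2$. For part (1), take $n$ to be an integer nearest $d_{max}$, so $\delta\le\tfrac12$ and $n\in\{\lfloor d_{max}\rfloor,\lceil d_{max}\rceil\}$; then $LB(\m,n)=\widetilde{LB}(\m,n)\ge\tfrac16 D^2-\tfrac38$, which is positive precisely when $D^2-\tfrac94>0$, and in that case Theorem~\ref{thm:HFBounds} yields that $(A_3,\m)$ is not free.

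For part (2), I would sharpen $\delta$ with a parity count. Since $2|\m|-9$ is odd it is never divisible by $6$; and when $|\m|\not\equiv 0\pmod 3$ one has $2|\m|-9\equiv\pm1\pmod 6$, so the fractional part of $d_{max}$ is $\tfrac16$ or $\tfrac56$ and the integer nearest $d_{max}$ lies at distance exactly $\tfrac16$ (and is still one of $\lfloor d_{max}\rfloor,\lceil d_{max}\rceil$). Repeating the estimate with $\delta=\tfrac16$ gives $LB(\m,n)=\widetilde{LB}(\m,n)\ge\tfrac16 D^2-\tfrac{1}{24}$, positive exactly when $D^2-\tfrac14>0$, and Theorem~\ref{thm:HFBounds} again gives non-freeness. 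The main obstacle is not the computation, which is routine, but the bookkeeping that guarantees the integer we evaluate at meets all three requirements simultaneously ($n\ge 0$ and $n\ge\lfloor d_{max}\rfloor$, so that both Lemma~\ref{lem:QuadraticMaximum} and Theorem~\ref{thm:HFBounds} apply, while also being close enough to $d_{max}$), together with cleanly handling the free-vertex case so that the (unconditionally stated) hypotheses are consistent with freeness there.
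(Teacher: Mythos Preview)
Your proposal is correct and follows essentially the same route as the paper: both locate an integer closest to $d_{max}$, bound the distance to $d_{max}$ using the residue of $|\m|$ modulo $3$, verify $\widetilde{LB}$ is positive there under the stated discriminant hypothesis, and then invoke Lemma~\ref{lem:QuadraticMaximum} together with Theorem~\ref{thm:HFBounds}. The paper phrases the positivity step in terms of the open interval between the two real roots $(r_1,r_2)$, noting $(r_1-r_2)^2=4D^2/9$; your completion of the square $\widetilde{LB}(\m,d)=-\tfrac32(d-d_{max})^2+\tfrac16 D^2$ is an equivalent reformulation. Your bookkeeping is slightly more explicit than the paper's (you check $n\ge 0$ and flag the free-vertex assumption needed for Lemma~\ref{lem:QuadraticMaximum}, which the paper leaves implicit).
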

\begin{proof}
We examine when the polynomial $\widetilde{LB}(\m,d)$ is positive at some integer $d>0$.  For this to happen, $\widetilde{LB}(\m,d)$ must have two real roots, say $r_1$ and $r_2$, and there must be an integer strictly between them.  Equivalently, there must be an integer in the interval $Q=(r_1,r_2)=(d_{max}-\frac{1}{2}|r_1-r_2|,d_{max}+\frac{1}{2}|r_1-r_2|)$.  From the form of $d_{max}$ given in Lemma~\ref{lem:QuadraticMaximum},
\begin{enumerate}
	\item If $|\m|\equiv 0 \pmod{3} $ then $d_{max}=N+1/2$ for some integer $N$
	\item If $|\m| \not \equiv 0 \pmod{3}$ then $d_{max}=N\pm 1/6$ for some integer $N$
\end{enumerate}
From the quadratic formula and the fact that the leading coefficient of $\widetilde{LB}(\m,d)$ is $-3/2$, we have
$(r_1-r_2)^2=4D^2/9$. Hence if $4D^2/9>1$, then $Q$ contains an integer. Moreover, if $|\m| \not \equiv 0 \pmod{3}$ and $4D^2/9>1/9$, then $Q$ also contains an integer. Now the result follows from Lemma~\ref{lem:QuadraticMaximum} and Theorem~\ref{thm:HFBounds}.
\end{proof}

\begin{remark}
In the following theorem, we performed the straightforward but tedious computations with the computer algebra system Mathematica.
\end{remark}

\begin{thm}\label{thm:Generalnon-free}
	Let
	\[
	P(\m)=(m_{01}+m_{23}-m_{02}-m_{13})^2+(m_{02}+m_{13}-m_{03}-m_{12})^2+(m_{03}+m_{12}-m_{01}-m_{23})^2
	\]
	and set $m_{ijk}=m_{ij}+m_{jk}+m_{ik}$.  Assume that $m_{ij}\le m_{ik}+m_{jk}+1$ for every $i$, $j$, and $k$.  Assume further that $\m$ does not have a free vertex.  If any of the conditions below are satisfied, then $\m$ is not a free multiplicity on $A_3=\A_{K_4}$.
	\begin{itemize}
		\item $|\m|\equiv 0 \mod 3$, none of the $m_{ijk}$ are odd, and $P(\m)>0$
		\item $|\m|\equiv 0 \mod 3$, two of the $m_{ijk}$ are odd, and $P(\m)>6$
		\item $|\m|\equiv 0 \mod 3$, four of the $m_{ijk}$ are odd, and $P(\m)>12$
		\item $|\m|\not\equiv 0 \mod 3$ and none of the $m_{ijk}$ are odd.
		\item $|\m|\not\equiv 0 \mod 3$, two of the $m_{ijk}$ are odd, and $P(\m)>2$
		\item $|\m|\not\equiv 0 \mod 3$, four of the $m_{ijk}$ are odd, and $P(\m)>8$.
	\end{itemize}
\end{thm}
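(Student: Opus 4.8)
The plan is to turn Lemma~\ref{lem:Discriminant} into an explicit inequality by computing the quantity $D^2 = B^2 - 4AC$ of $\widetilde{LB}(\m,d) = Ad^2 + Bd + C$ as a genuine function of $\m$, and then reading off the six cases by comparing $D^2$ with the thresholds $9/4$ and $1/4$. By Lemma~\ref{lem:QuadraticMaximum} we already have $A = -3/2$ and $B = |\m| - 9/2$, and its proof records the constant term as
\[
C = 3 - \sum_{i,j}\binom{m_{ij}-1}{2} - \sum_{i,j,k} HP(S/J(ijk)),
\]
so the one missing input is the (constant) Hilbert polynomial $HP(S/J(ijk))$ of each triangle.

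First I would record the elementary combinatorial fact that the number of odd triangle-sums $m_{ijk}$ is $0$, $2$, or $4$: since every edge of $K_4$ lies in exactly two triangles, $\sum_{i,j,k} m_{ijk} = 2|\m|$ is even, so an even number of the four summands are odd. This is precisely why the theorem has six cases (each residue class of $|\m|$ modulo $3$ paired with each of the three parity possibilities). Next, from the graded resolution
\[
0 \to \syz(J(ijk)) \to \bigoplus_{\tau\subset ijk} S(-m_\tau) \to S \to S/J(ijk) \to 0
\]
and the Betti numbers $\syz(J(ijk)) \cong S(-\Omega_{ijk}-1)^{a_{ijk}} \oplus S(-\Omega_{ijk})^{2-a_{ijk}}$ of Lemma~\ref{lem:TriangleSyzygies} — legitimate here by Remark~\ref{rem:ExponentFudge}, since we assume $m_{ij}\le m_{ik}+m_{jk}+1$ throughout — I would extract the constant term of the Euler characteristic; using $a_{ijk}=m_{ijk}-2\Omega_{ijk}$ this collapses to
\[
HP(S/J(ijk)) = 1 - \sum_{\tau\subset ijk}\binom{m_\tau-1}{2} + (\Omega_{ijk}-1)(m_{ijk}-\Omega_{ijk}-2),
\]
and a one-line check on the parity of $m_{ijk}$ shows that $(\Omega_{ijk}-1)(m_{ijk}-\Omega_{ijk}-2)$ equals $\tfrac14(m_{ijk}-3)^2$ when $m_{ijk}$ is odd and $\tfrac14\big((m_{ijk}-3)^2-1\big)$ when $m_{ijk}$ is even.

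Substituting into $C$ and then into $D^2 = (|\m|-9/2)^2 + 6C$, the rest is the bookkeeping flagged as the Mathematica computation: expand everything as polynomials in the $m_{ij}$; use that each edge lies in two triangles to replace $\sum_{i,j,k}\sum_{\tau\subset ijk}\binom{m_\tau-1}{2}$ by $2\sum_{i,j}\binom{m_{ij}-1}{2}$ and $\sum_{i,j,k} m_{ijk}^2$ by $\sum_{i,j}m_{ij}^2 + |\m|^2 - 2M$, where $M = m_{01}m_{23}+m_{02}m_{13}+m_{03}m_{12}$; and use $P(\m) = 3\big(\sum_{i,j}m_{ij}^2 + 2M\big) - |\m|^2$, which follows by expanding the three squares in its definition. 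Every term linear in $|\m|$ cancels, leaving the clean identity
\[
D^2 = \tfrac12 P(\m) + \tfrac32 E - \tfrac{15}{4},
\]
where $E$ is the number of even triangle-sums $m_{ijk}$ (so $E\in\{0,2,4\}$).

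The six cases then fall out by arithmetic. For $E = 4, 2, 0$ respectively, $D^2$ equals $\tfrac12 P(\m) + \tfrac94$, $\tfrac12 P(\m) - \tfrac34$, $\tfrac12 P(\m) - \tfrac{15}{4}$; hence the hypothesis $D^2 > 9/4$ of Lemma~\ref{lem:Discriminant}(1) becomes $P(\m) > 0$, $P(\m) > 6$, $P(\m) > 12$, while (for $|\m|\not\equiv 0\pmod{3}$) the hypothesis $D^2 > 1/4$ of Lemma~\ref{lem:Discriminant}(2) becomes $P(\m) > -4$ (vacuous, so no condition on $P$), $P(\m) > 2$, $P(\m) > 8$. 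Since $\m$ has no free vertex, Lemma~\ref{lem:QuadraticMaximum} and Theorem~\ref{thm:HFBounds} apply, and invoking Lemma~\ref{lem:Discriminant} yields non-freeness in each of the listed cases. The only real obstacle is keeping the parity-dependent constants straight through the expansion and confirming that the $|\m|$-linear terms really do vanish, so that $D^2$ depends on $\m$ only through $P(\m)$ and the parity count $E$; once that identity is in hand the theorem is immediate.
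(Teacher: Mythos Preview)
Your proposal is correct and follows essentially the same route as the paper: compute the discriminant $D^2$ of $\widetilde{LB}(\m,d)$ explicitly in terms of $P(\m)$ and the parity data, then invoke Lemma~\ref{lem:Discriminant}. The paper states the key identity as $2(D^2-9/4)=P(\m)-3q$ with $q$ the number of odd $m_{ijk}$; your identity $D^2=\tfrac12 P(\m)+\tfrac32 E-\tfrac{15}{4}$ with $E=4-q$ is the same statement. Your write-up actually supplies by hand the algebraic steps (the expression for $HP(S/J(ijk))$ via the Euler characteristic, the identities for $\sum_{ijk} m_{ijk}^2$ and for $P(\m)$) that the paper delegates to Mathematica, so if anything it is more self-contained than the original.
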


\begin{remark}
The polynomial $P(\m)$ of Theorem~\ref{thm:Generalnon-free} is essentially an upper bound on the difference between $GMP(2)$ and $LMP(2)$, the second global and local mixed products introduced in \cite{TeraoCharPoly}.  Indeed, this theorem could be proved using these techniques.
\end{remark}

\begin{proof}[Proof of Theorem~\ref{thm:Generalnon-free}]
Let $D$ be the discriminant of $\widetilde{LB}(\m,d)$.  From the proof of Lemma~\ref{lem:QuadraticMaximum}, $\widetilde{LB}(\m,d)=Ad^2+Bd+C$ with
\begin{itemize}
	\item $A=-3/2$
	\item $B=-9/2+|\m|$
	\item $C=3-\sum_{ij}\binom{m_{ij}-1}{2}-\sum_{ijk} HP(S/J(ijk),d)$.
\end{itemize}
Hence $D^2=B^2-4AC=9|\m|+|\m|^2-6\sum_{ij}m_{ij}^2+6\sum_{ijk} HP(S/J(ijk),d)$.  The polynomial $HP(S/J(ijk),d)$ is a constant, in fact,
\[
HP(S/J(ijk),d)=\dbinom{\Omega_{ijk}+1}{2}-\sum_{\{s,t\}\subset\{i,j,k\}}\dbinom{\Omega_{ijk}+1-m_{st}}{2}.
\]
Since the constant $\Omega_{ijk}$ depends on the parity of $m_{ijk}=m_{ij}+m_{ik}+m_{jk}$, the discriminant $D$ will also.  A straightforward computation now yields that $2(D^2-9/4)$ is equal to $P(\m)-3q$, where $q$ is the number of $m_{ijk}$ that are odd.  Note that $\sum_{ijk}m_{ijk}=2|\m|$, so $q$ equals zero, two, or four.  The dependence on the congruence class of $|\m|$ modulo three follows from Lemma~\ref{lem:Discriminant}.  In the case that $|\m|\not\equiv 0 \mod 3$ and none of the $m_{ijk}$ are odd, $2(D^2-1/4)=P(\m)+4$, which is always positive.  Hence we always have non-freeness in this case.
\end{proof}

\begin{defn}\label{defn:ANNMultiplicities}
Let $n_i\in\Z_{\ge 0}$ for $i=0,1,2,3$ and $\epsilon_{ij}\in\{-1,0,1\}$ for $0\le i<j\le 3$.  An ANN multiplicity on $A_3$ is a multiplicity of the form $m_{ij}=n_i+n_j+\epsilon_{ij}$.
\end{defn}

ANN multiplicities are classified as free or non-free in~\cite{AbeSignedEliminable} (not just on $A_3$ but on any braid arrangement).

\begin{prop}\label{prop:BigCellMultiplicities}
Let $\m$ be a multiplicity so that $m_{ij}\le m_{ik}+m_{jk}+1$ for every $i$, $j$, and $k$.  Then $\m$ is a free multiplicity for $A_3$ if and only if $\m$ has a free vertex or $\m$ is a free ANN multiplicity.
\end{prop}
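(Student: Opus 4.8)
The plan is to treat the two implications separately. The ``if'' direction requires nothing new: if $\m$ has a free vertex then $(A_3,\m)$ is free by Corollary~\ref{cor:FirstFreeCondition}, and if $\m$ is a free ANN multiplicity then $(A_3,\m)$ is free by the classification of Abe--Nuida--Numata~\cite{AbeSignedEliminable}. The substance is the ``only if'' direction, which I would reduce to a purely combinatorial statement: \emph{if $\m$ satisfies $m_{ij}\le m_{ik}+m_{jk}+1$ for all $i,j,k$, has no free vertex, and is free, then $\m$ can be written in the form $m_{ij}=n_i+n_j+\epsilon_{ij}$ with $n_i\in\Z_{\ge 0}$ and $\epsilon_{ij}\in\{-1,0,1\}$}. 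Once this is established, $\m$ is an ANN multiplicity which is also free, hence a free ANN multiplicity, and we are done.

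To prove the combinatorial statement, first apply Theorem~\ref{thm:Generalnon-free}: since $\m$ is free and has no free vertex, none of the six non-freeness conditions there can hold, and this pins $P(\m)$ down tightly. Write $A=m_{01}+m_{23}$, $B=m_{02}+m_{13}$, $C=m_{03}+m_{12}$, so that $P(\m)=(A-B)^2+(B-C)^2+(C-A)^2$. I would record three elementary facts: (i) setting $u=A-B$ and $v=B-C$ one has $P(\m)=2(u^2+uv+v^2)$, and $u^2+uv+v^2$ never equals $2$, $5$, or $6$, so $P(\m)\in\{0,2,6,8,14,\dots\}$; (ii) $P(\m)=0$ forces $A=B=C$, hence $3\mid|\m|=A+B+C$; (iii) if $P(\m)=0$ then, using $2n_i:=m_{ij}+m_{ik}-m_{jk}$ (well-defined since the three triangle expressions at $i$ coincide when $A=B=C$) together with the identity $n_i+n_j=m_{ij}$, all four triangle-sums $m_{ijk}=m_{ij}+m_{ik}+m_{jk}=2(n_i+n_j+n_k)$ have the same parity, so $P(\m)=0$ is incompatible with exactly two of the $m_{ijk}$ being odd. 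Combining these with Theorem~\ref{thm:Generalnon-free}, the possibilities that remain are: no $m_{ijk}$ odd and $P(\m)=0$; all four $m_{ijk}$ odd and $P(\m)\in\{0,2,6,8\}$; and exactly two $m_{ijk}$ odd and $P(\m)\in\{2,6\}$.

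Next I would extract the ANN data in each case. When $P(\m)=0$ the construction above gives $2n_i=m_{ij}+m_{ik}-m_{jk}$, which by the hypothesis $m_{jk}\le m_{ij}+m_{ik}+1$ is at least $-1$; if all $m_{ijk}$ are even this forces $n_i\in\Z_{\ge 0}$ and $\m=(n_i+n_j)$ is ANN with every $\epsilon_{ij}=0$, while if all $m_{ijk}$ are odd then $n_i-\tfrac12\in\Z_{\ge 0}$ and $\m$ is ANN with every $\epsilon_{ij}=1$. For the remaining cases, with $P(\m)\in\{2,6,8\}$, the multiset $\{A,B,C\}$ is constrained: up to relabeling the vertices of $K_4$, either two of $A,B,C$ coincide or $A,B,C$ form an arithmetic progression of common difference $1$, so the three triangle expressions $m_{ij}+m_{ik}-m_{jk}$ at each vertex lie within $2$ of one another. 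I would normalize by relabeling, take $n_i$ to be the appropriate integer part of half the ``median'' triangle expression at $i$, set $\epsilon_{ij}=m_{ij}-n_i-n_j$, and verify directly that $\epsilon_{ij}\in\{-1,0,1\}$ and $n_i\ge 0$, using the small value of $P(\m)$ together with the inequalities $m_{ij}\le m_{ik}+m_{jk}+1$.

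The main obstacle is exactly this last step: the finite but delicate case analysis for $P(\m)\in\{2,6,8\}$. In each of the five surviving parity/value combinations one must choose the correct base vertex (and which triangle at it to use), round the resulting half-integers in the direction that keeps every $\epsilon_{ij}$ inside $\{-1,0,1\}$, and argue non-negativity of the $n_i$; the hypotheses excluding free vertices and bounding $m_{ij}$ by $m_{ik}+m_{jk}+1$ are precisely what make these verifications go through, so they have to be used carefully in each subcase. Note that the loss of information in passing from $\m$ to $P(\m)$ — which discards the eventually-vanishing graded data that freeness can genuinely depend on — is harmless here, since the chain of implications we need is ``$\m$ free $\Rightarrow$ $P(\m)$ small $\Rightarrow$ $\m$ is ANN,'' and the last step is a statement about $\m$ alone.
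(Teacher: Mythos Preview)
Your proposal is correct and follows essentially the same route as the paper: invoke Theorem~\ref{thm:Generalnon-free} to bound $P(\m)$, split into cases according to the parity pattern of the $m_{ijk}$ and the (very few) admissible values of $P(\m)$, and in each case manufacture the ANN data $(n_i,\epsilon_{ij})$ from the half-integers $\tfrac12(m_{ij}+m_{ik}-m_{jk})$. The paper organizes the case analysis with the explicit quantities $n_{i,ijk}=\tfrac12(m_{ij}+m_{ik}-m_{jk})$ and $c_{ijst}=\tfrac12(m_{ij}-m_{js}+m_{st}-m_{it})$, which play the role of your ``triangle expressions'' and the differences $A-B,B-C,C-A$; otherwise the argument is the same.

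One small caveat: in the ``all $m_{ijk}$ odd, $P(\m)=0$'' case your prescription $N_i=n_i-\tfrac12$, $\epsilon_{ij}\equiv 1$ can fail non-negativity, since the hypotheses allow a single $n_i=-\tfrac12$ (two such would force $m_{ij}=-1$). The paper handles this by rounding the offending $\tilde N_i$ up and the others down, absorbing the discrepancy into the $\epsilon_{ij}$; you should fold this into the ``delicate case analysis'' you already flag as the main obstacle.
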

\begin{proof}
If $\m$ has a free vertex then it is free by Corollary~\ref{cor:FirstFreeCondition}.  We now show that if any of the conditions of Theorem~\ref{thm:Generalnon-free} fail, then $\m$ is an ANN multiplicity.  We will do this by explicitly constructing non-negative integers $N_0,N_1,N_2,N_3$ and $\epsilon_{ij}\in\{-1,0,1\}$ so that $m_{ij}=N_i+N_j+\epsilon_{ij}$ for $0\le i<j\le 3$.  The main thing we have to be careful about is the non-negativity of the $N_i$.

We introduce some notation.  For a vertex $i$ of a triangle $ijk$, set $n_{i,ijk}=(m_{ij}+m_{ik}-m_{jk})/2$.  Since we assume $m_{jk}\le m_{ij}+m_{ik}+1$ for every triple $i,j,k$, it follows that $n_{i,ijk}\ge -1/2$.  Also, for a \textit{directed} four-cycle $ijst$ set $c_{ijst}=(m_{ij}-m_{js}+m_{st}-m_{it})/2$.

If all of the $m_{ijk}$ are even, then every expression $n_{i,ijk}$ is a non-negative integer.  In this case, negating Theorem~\ref{thm:Generalnon-free} means $P(\m)=0$; hence $c_{ijst}=0$ for every directed four cycle, and the expressions $n_{i,ijk}$ are independent of the triangle chosen to contain $i$ (for instance, $n_{0,012}=n_{0,023}=n_{0,013}$).  Set $N_0=n_{0,012},N_1=n_{1,012},N_2=n_{2,012},$ and $N_3=n_{3,013}$.  We have $N_i\ge 0$ and $m_{ij}=N_i+N_j$ for all $i,j$, so $\m$ is an ANN multiplicity.

Now suppose two of the $m_{ijk}$ are odd, and $P(\m)\le 6$.  Suppose without loss of generality that $m_{012}$ and $m_{023}$ are even, while $m_{013}$ and $m_{123}$ are odd.  Set $N_0=n_{0,012}, N_1=n_{1,012},N_2=n_{2,023},$ and $N_3=n_{3,023}$.  Note that, given our assumptions, all the $N_i$ are non-negative integers.
\[
\begin{array}{lll}
N_0+N_1=n_{0,012}+n_{1,012}=m_{01}\\
N_0+N_2=n_{0,012}+n_{2,023}=m_{02}+c_{0123}\\
N_0+N_3=n_{0,012}+n_{3,023}=m_{03}+c_{0123}\\
N_1+N_2=n_{1,012}+n_{2,023}=m_{12}+c_{0123}\\
N_1+N_3=n_{1,012}+n_{3,023}=m_{13}+c_{0132}+c_{0312}\\
N_2+N_3=n_{2,023}+n_{3,023}=m_{23}
\end{array}
\]
Note also that, under our assumptions, $c_{0123}$ is an integer while $c_{0132}$ and $c_{0312}$ are not.  We also have $c_{0123}+c_{0312}=c_{0132}$.  Since $P(\m)\le 6$, we have only the following possibilities:  
\begin{itemize}
	\item $c_{0123}=1,c_{0312}=-1/2,c_{0132}=1/2$
	\item $c_{0123}=-1,c_{0312}=1/2,c_{0132}=-1/2$
	\item $c_{0123}=0,c_{0312}=c_{0132}=1/2$
	\item $c_{0123}=0,c_{0312}=c_{0132}=-1/2$.
\end{itemize}
In any of the above situations, set $\epsilon_{01}=\epsilon_{23}=0$, $\epsilon_{02}=\epsilon_{03}=\epsilon_{12}=-c_{0123}$, and $\epsilon_{13}=-c_{0132}-c_{0312}$.  By the above observations, we have shown $m_{ij}=N_i+N_j+\epsilon_{ij}$ is an ANN multiplicity.

Finally, suppose all of the $m_{ijk}$ are odd and $P(\m)\le 12$.  In fact, $P(\m)$ is the sum of squares of three integers which add to zero, so inspection yields $P(\m)\le 8$.  Set $\tilde{N}_0=n_{0,012}, \tilde{N}_1=n_{1,013},\tilde{N}_2=n_{2,023},$ and $\tilde{N}_3=n_{3,123}$.  Note that, given our assumptions, all the $\tilde{N}_i$ are non-integers.  We modify them shortly.  We have
\[
\begin{array}{lll}
\tilde{N}_0+\tilde{N}_1=n_{0,012}+n_{1,013}=m_{01}+c_{0213}\\
\tilde{N}_0+\tilde{N}_2=n_{0,012}+n_{2,023}=m_{02}+c_{0123}\\
\tilde{N}_0+\tilde{N}_3=n_{0,012}+n_{3,123}=m_{03}+c_{0123}+c_{0213}\\
\tilde{N}_1+\tilde{N}_2=n_{1,013}+n_{2,023}=m_{12}+c_{0123}+c_{0213}\\
\tilde{N}_1+\tilde{N}_3=n_{1,013}+n_{3,123}=m_{13}+c_{0123}\\
\tilde{N}_2+\tilde{N}_3=n_{2,023}+n_{3,123}=m_{23}+c_{0213}.
\end{array}
\]
Under our assumptions, $c_{0123},c_{0213},$ and $c_{0231}$ are all integers.  We also have $c_{0123}+c_{0231}=c_{0213}$.  Since $P(\m)\le 8$, at most two of $c_{0123},c_{0213},$ and $c_{0231}$ can be non-zero, and all must have absolute value at most one.

First assume $c_{0123}=0$ and $c_{0231}=\pm 1$.  We have
\[
\begin{array}{lll}
\tilde{N}_0+\tilde{N}_1=n_{0,012}+n_{1,013}=m_{01}+c_{0213}\\
\tilde{N}_0+\tilde{N}_2=n_{0,012}+n_{2,023}=m_{02}\\
\tilde{N}_0+\tilde{N}_3=n_{0,012}+n_{3,123}=m_{03}+c_{0213}\\
\tilde{N}_1+\tilde{N}_2=n_{1,013}+n_{2,023}=m_{12}+c_{0213}\\
\tilde{N}_1+\tilde{N}_3=n_{1,013}+n_{3,123}=m_{13}\\
\tilde{N}_2+\tilde{N}_3=n_{2,023}+n_{3,123}=m_{23}+c_{0213}.
\end{array}
\]
Since $c_{0213}\ge -1$ and $m_{ij}\ge 1$ for all $i,j$, at most one of the $\tilde{N}_i$ is equal to $-1/2$.  Without loss, assume $\tilde{N}_0\ge-1/2$ while $N_i\ge 1/2$ for $i=1,2,3$.  Now set $N_0=\lceil\tilde{N}_0\rceil$, $N_1=\lfloor\tilde{N}_1\rfloor$, $N_2=\lceil\tilde{N}_2\rceil$, and $N_3=\lfloor\tilde{N}_3\rfloor$.  With these assumptions, we have
\[
\begin{array}{lll}
N_0+N_1= & n_{0,012}+n_{1,013} & =m_{01}+c_{0213}\\
N_0+N_2= & n_{0,012}+n_{2,023}+1 & =m_{02}+1\\
N_0+N_3= & n_{0,012}+n_{3,123} & =m_{03}+c_{0213}\\
N_1+N_2= & n_{1,013}+n_{2,023} & =m_{12}+c_{0213}\\
N_1+N_3= & n_{1,013}+n_{3,123}-1 & =m_{13}-1\\
N_2+N_3= & n_{2,023}+n_{3,123} & =m_{23}+c_{0213}.
\end{array}
\]
So $\m$ is an ANN multiplicity with $\epsilon_{01}=\epsilon_{03}=\epsilon_{12}=\epsilon_{23}=-c_{0213}$, $\epsilon_{02}=-1,$ and $\epsilon_{13}=1$.

The case $c_{0213}=0$ is symmetric to the above case.  We now consider the case $c_{0231}=0$, which implies $c_{0123}=c_{0213}$.  If $c_{0123}=0$ as well, then we again have at most one of $\tilde{N}_i$ equal to $-1/2$, and we argue that $\m$ is an ANN multiplicity in the same way as above.

Now suppose that $c_{0231}=0$ and $c_{0123}=c_{0213}=1$.  Then
\[
\begin{array}{lll}
\tilde{N}_0+\tilde{N}_1=n_{0,012}+n_{1,013}=m_{01}+1\\
\tilde{N}_0+\tilde{N}_2=n_{0,012}+n_{2,023}=m_{02}+1\\
\tilde{N}_0+\tilde{N}_3=n_{0,012}+n_{3,123}=m_{03}+2\\
\tilde{N}_1+\tilde{N}_2=n_{1,013}+n_{2,023}=m_{12}+2\\
\tilde{N}_1+\tilde{N}_3=n_{1,013}+n_{3,123}=m_{13}+1\\
\tilde{N}_2+\tilde{N}_3=n_{2,023}+n_{3,123}=m_{23}+1.
\end{array}
\]
In this case it is also clear that at most one of $\tilde{N}_i$ can equal $-1/2$.  If all $\tilde{N}_i$ are at least $1/2$, then we can take $N_i=\lfloor \tilde{N}_i \rfloor$ for $i=0,1,2,3$.  Then we will clearly have an ANN multiplicity.  Suppose then that one of the $\tilde{N}_i$ is equal to $-1/2$.  Without loss of generality we can assume that $\tilde{N}_0= -1/2$.  Using the third listed equation above, $\tilde{N}_3\ge 7/2$.  In this case we can set $N_0=\lceil \tilde{N}_0 \rceil=0,N_1=\lfloor \tilde{N}_1\rfloor, N_2=\lfloor \tilde{N}_2 \rfloor,$ and $N_3=\lfloor\tilde{N}_3\rfloor-1$, giving an ANN multiplicity.

Finally, suppose that $c_{0231}=0$ and $c_{0123}=c_{0213}=-1$.  Then
\[
\begin{array}{lll}
\tilde{N}_0+\tilde{N}_1=n_{0,012}+n_{1,013}=m_{01}-1\\
\tilde{N}_0+\tilde{N}_2=n_{0,012}+n_{2,023}=m_{02}-1\\
\tilde{N}_0+\tilde{N}_3=n_{0,012}+n_{3,123}=m_{03}-2\\
\tilde{N}_1+\tilde{N}_2=n_{1,013}+n_{2,023}=m_{12}-2\\
\tilde{N}_1+\tilde{N}_3=n_{1,013}+n_{3,123}=m_{13}-1\\
\tilde{N}_2+\tilde{N}_3=n_{2,023}+n_{3,123}=m_{23}-1.
\end{array}
\]
Set $N_i=\lceil\tilde{N}_i\rceil$ for $i=0,1,2,3$.  Then $N_i\ge 0$ for $i=0,1,2,3$ and we have an ANN multiplicity.
\end{proof}

\section{classification, Part II}\label{sec:PartII}
In this section we complete the classification of free multiplicities on $A_3$ given in Theorem~\ref{thm:intro}.  Our strategy is to show that, if we assume $\m$ has no free vertex and that the syzygies of $J(0123)$ are locally generated as required by Theorem~\ref{thm:FreeEquiv}, then we are forced to have the twelve inequalities $m_{ij}\le m_{ik}+m_{jk}+1$ for every triple $i,j,k$.  Then Proposition~\ref{prop:BigCellMultiplicities} guarantees that such a multiplicity is free if and only if it is a free ANN multiplicity.  We introduce some notation for studying the local syzygies.

\begin{notation}
Label the exponents with the letters $a$ through $f$ as in Figure~\ref{fig:k4-early}, and refer to the forms as $A=(x_{1}-x_{0})^{a}$, and
so on.  The local ideals $J(012)$, $J(013)$, $J(023)$, and $J(123)$
then have (not necessarily minimal) generating sets $\{A,B,D\}$,
$\{A,C,E\}$, $\{B,C,F\}$, and $\{D,E,F\}$.
\end{notation}

\begin{notation}
Consider the free $S$-module of rank six with basis $[A],[B],\ldots,[F]$.
A syzygy on $J(0123)$ is an expression of the form $g_{a}[A] + g_{b}[B] + g_{c}[C] +
g_{d}[D] + g_{e}[E] + g_{f}[F]$ satisfying $g_{a}A + g_{b}B + g_{c}C +
g_{d}D + g_{e}E + g_{f}F=0$.  Its \emph{support} is the set of generators with nonzero coefficient;
for example, the Koszul syzygy $A[B]-B[A]$ has support $\{A,B\}$.

We say that a syzygy is \emph{local} if its support is a subset of
$\{A,B,D\}$, 
$\{A,C,E\}$, $\{B,C,F\}$, or $\{D,E,F\}$, and \emph{locally	generated} if it is a linear combination of local syzygies.  
\end{notation}

\begin{notation}
We introduce notation, and an abuse thereof, to describe the
syzygies on the local ideal $J(012)=\langle A,B,D\rangle$.  We extend this
notation to the other triangles in the obvious way.
	
We denote the Koszul syzygy $A[B]-B[A]$ by $K_{ab}$; it has degree
$a+b$.  Similarly, the Koszul syzygies $K_{ad}$ and $K_{bd}$ have
degrees $a+d$ and $b+d$ respectively.  The support of the Koszul
syzygy $K_{ab}$ is $\{A,B\}$.  There are also syzygies with support
$\{A,B,D\}$; from Lemma~\ref{lem:TriangleSyzygies} these
have degree as low as $\frac{a+b+d-1}{2}$ (when $\{A,B,D\}$ is a minimal
generating set) and as low as $d$ if $D$ is not a minimal generator
(with obvious adjustments for symmetry).

Since many of our arguments below concern only the supports of the
syzygies, we abuse notation and refer to any syzygy with support
$\{A,B\}$ by the name $K_{ab}$.  (Thus, while $K_{ab}$ may not refer to the Koszul syzygy,
it does refer to an $S$-linear multiple, so all relevant intuition
about Koszul syzygies continues to work.)
Finally, $\mathcal{S}_{abd}$ will refer to any syzygy supported on a
subset of $\{A,B,D\}$.
\end{notation}

Without loss of generality, let $K_{be}$ have the least degree among the non-local Koszul syzygies $K_{af},K_{be},K_{cd}$.  We will show that if $\m$ is a free multiplicity with no free vertex and $K_{be}$ is locally generated (as it must be by Theorem~\ref{thm:FreeEquiv}), then $\m$ is a free ANN multiplicity.  To that end we make the following assumptions for the remainder of the section.

\begin{assumptions}\label{ass}\ 
\begin{enumerate}
	\item There is no free vertex.
	\item $b+e\le \min\{a+f,c+d\}$
	\item $K_{be}$ is locally generated.  That is, we may write
	\begin{equation}\tag{$\ast$}\label{eqn:Koszul}
	K_{be}=\mathcal{S}_{abd}+\mathcal{S}_{ace}+\mathcal{S}_{bcf}+\mathcal{S}_{def}.
	\end{equation}
\end{enumerate}
\end{assumptions}

\begin{lem}\label{lem:KoszulSupport}
	Given Assumptions~\ref{ass} and referring to Equation~\eqref{eqn:Koszul},
	\begin{itemize}
		\item If $\mathcal{S}_{def}$ is not supported on $E$ then $e\ge a+c-1$
		\item If $\mathcal{S}_{ace}$ is not supported on $E$ then $e\ge d+f-1$
		\item If $\mathcal{S}_{abd}$ is not supported on $B$ then $b\ge c+f-1$
		\item If $\mathcal{S}_{bcf}$ is not supported on $B$ then $b\ge a+d-1$.
	\end{itemize}
\end{lem}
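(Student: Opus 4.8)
The plan is to analyze Equation~\eqref{eqn:Koszul} term by term and extract degree constraints from the requirement that everything cancels. The key observation is that $K_{be}$ has support $\{B, E\}$, and since the right-hand side is a sum of syzygies supported on the four triangles $\{A,B,D\}$, $\{A,C,E\}$, $\{B,C,F\}$, $\{D,E,F\}$, the coefficients of $A$, $C$, $D$, and $F$ must cancel among the four summands. This cancellation forces relationships between the supports of the $\mathcal{S}$ terms.

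\medskip

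\emph{First step: bookkeeping of supports.} For concreteness I treat the first bullet; the other three are symmetric under the evident relabelings of $K_4$. Suppose $\mathcal{S}_{def}$ is not supported on $E$, so $\mathcal{S}_{def}$ is supported on a subset of $\{D, F\}$. Write out each summand in the basis $[A],\dots,[F]$ and look at the coefficient of $[D]$ in Equation~\eqref{eqn:Koszul}: it is $0$ on the left, and on the right it is the $[D]$-coefficient of $\mathcal{S}_{abd}$ plus the $[D]$-coefficient of $\mathcal{S}_{def}$. Similarly the coefficient of $[F]$ involves only $\mathcal{S}_{bcf}$ and $\mathcal{S}_{def}$. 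The idea is to show that $\mathcal{S}_{def}$ must actually be supported on $E$ unless $E$ is ``redundant enough'' relative to $\{A,C\}$ in $J(013)=\langle A,C,E\rangle$ — and redundancy of $E$ in $J(013)$ is exactly the inequality $e \ge a+c-1$ (the degree in which $E=(z-x)^e$ can be written as an $S$-combination of $A = x^a$ and $C = z^c$).

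\medskip

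\emph{Second step: the combinatorial core.} To make this precise I would argue as follows. If $\mathcal{S}_{def}$ is supported only on $\{D,F\}$, consider the $[B]$ and $[E]$ coefficients. The $[E]$-coefficient must be $1$ (matching $K_{be} = B[E] - E[B]$, up to the abuse of notation making $K_{be}$ an $S$-multiple), and $[E]$ appears only in $\mathcal{S}_{ace}$ and $\mathcal{S}_{def}$; since $\mathcal{S}_{def}$ is not supported on $E$, the entire $[E]$-coefficient comes from $\mathcal{S}_{ace}$. Thus $\mathcal{S}_{ace}$ is a genuine syzygy on $\langle A, C, E\rangle$ with nonzero $[E]$-coefficient whose $[E]$-entry is the chosen multiple of $E^{?}$-degree — in particular, collecting the $[A]$ and $[C]$ parts, we obtain an expression for (a nonzero multiple of) $E$ as an $S$-linear combination of $A$ and $C$ together with whatever $[A]$, $[C]$ contributions come from $\mathcal{S}_{abd}$ and $\mathcal{S}_{bcf}$. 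But $\mathcal{S}_{abd}$ and $\mathcal{S}_{bcf}$ contribute no $[E]$ term, so after isolating the $[E]$ component we get that $E$ lies in the ideal $\langle A, C\rangle$; combined with a degree count (the minimal degree of such a relation is $a + c$, and the ``extra'' relation expressing $E$ in terms of $A,C$ exists precisely when this does not exceed the relevant bound), this yields $e \ge a + c - 1$ exactly as in Remark~\ref{rem:syzygies} and Lemma~\ref{lem:TriangleSyzygies}.

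\medskip

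\emph{Anticipated obstacle.} The delicate point is handling the abuse of notation: $K_{be}$, $\mathcal{S}_{abd}$, etc.\ denote \emph{arbitrary} syzygies with the stated support, not the Koszul ones, so I cannot simply read off coefficients as monomials. The rigorous route is to note that any syzygy supported on $\{A,C,E\}$ with a nonzero $[E]$-coefficient $g_e$ gives $g_e E = -(g_a A + g_c C) \in \langle A, C\rangle$, hence $g_e E \in \langle A, C\rangle$ with $g_e \neq 0$; since $\langle A, C\rangle = \langle x^a, z^c\rangle$ is a complete intersection in the two variables $x,z$ (as $y$ does not appear), the quotient $S/\langle x^a, z^c\rangle$ has socle degree $a + c - 2$ in those variables, so $\bar E \neq 0$ forces $e \le a + c - 2$, i.e.\ its \emph{failure} forces $e \ge a + c - 1$. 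So the argument actually runs by contradiction: if $e < a+c-1$ then $E$ is a nonzerodivisor mod $\langle A,C\rangle$ in the relevant sense and no syzygy supported on $\{A,C,E\}$ can carry the $[E]$-term of $K_{be}$, forcing $\mathcal{S}_{def}$ to absorb it and hence be supported on $E$ — the contrapositive of the claim. The main work is making sure the degree/parity bookkeeping from Lemma~\ref{lem:TriangleSyzygies} is invoked correctly, and that the symmetry argument genuinely covers all four bullets (it does: cycling $(A,B,D)\to(A,C,E)\to(B,C,F)\to(D,E,F)$ and swapping the roles of the two ``outer'' vertices of $K_{be}$ permutes the four statements).
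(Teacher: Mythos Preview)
Your overall structure is right: isolate the $[E]$-coefficient, observe that only $\mathcal{S}_{ace}$ and $\mathcal{S}_{def}$ can contribute to it, and deduce a membership in $\langle A,C\rangle$. But the step where you pass from ``$g_e E\in\langle A,C\rangle$ with $g_e\neq 0$'' to ``$E\in\langle A,C\rangle$'' is not justified, and your stated justification is false. You write that if $e<a+c-1$ then ``$E$ is a nonzerodivisor mod $\langle A,C\rangle$ in the relevant sense.'' It is not: in $S/\langle x^a,z^c\rangle$ one has, for instance, $x^{a-1}z^{c-1}\cdot(x-z)=x^az^{c-1}-x^{a-1}z^c\equiv 0$, so $E=(x-z)^e$ is a zerodivisor for every $e\ge 1$. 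In particular, there are always syzygies on $\{A,C,E\}$ with nonzero $[E]$-coefficient (indeed, the minimal syzygies of Lemma~\ref{lem:TriangleSyzygies} have full support), so knowing only $g_e\neq 0$ tells you nothing.

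What you are missing is the identification of $g_e$ itself. Since $K_{be}$ here is the actual Koszul syzygy $E[B]-B[E]$, its $[E]$-coefficient is exactly $-B=-y^b$; and since $\mathcal{S}_{def}$ is assumed not supported on $E$, you get $g_e=-B$, hence $y^bE\in\langle x^a,z^c\rangle$. Now the correct nonzerodivisor is $B$, not $E$: the variable $y$ does not appear in the generators $x^a,z^c$, so $y$ is regular on $S/\langle x^a,z^c\rangle$, and $y^bE\in\langle x^a,z^c\rangle$ forces $E\in\langle x^a,z^c\rangle$, i.e.\ $e\ge a+c-1$. Equivalently (and this is how the paper phrases it), the colon ideal $(\langle x^a,z^c\rangle:(x-z)^e)$ is extended from $\kk[x,z]$, so it can contain $y^b$ only if it is the unit ideal. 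Once you insert this one observation, your argument becomes the paper's.
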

\begin{proof}
We prove the first statement.  The remaining statements are proved in the same way.  Fixing coordinates, we may write $A=x^a,B=y^b,C=z^c,E=(x-z)^e$.

Observe $\mathcal{S}_{ace}=g_a[A]+g_c[C]+g_e[E]$, where $g_a,g_b,g_c\in S$.  On the one hand, $g_eE=-(g_aA+g_cC)$, so $g_e\in \left(\langle A,C \rangle:E\right)$.  On the other hand, since we assumed $\mathcal{S}_{def}$ is not supported on $E$, no other terms in Equation~\eqref{eqn:Koszul} are supported on $[E]$, so $g_e=-B$.  In particular, $B\in \left(\langle A,C \rangle:E\right)$.  In other words, $y^b\in \left(\langle x^a,z^c \rangle:(x-z)^e\right)$, so we conclude that $\left(\langle x^a,z^c \rangle:(x-z)^e\right)=\langle 1 \rangle$.  Consequently, $E\in\langle A,C\rangle$, which happens if and only if $e\ge a+c-1$.\qedhere
\end{proof}


\begin{lem}\label{lem:ChoosingPair}
Given Assumptions~\ref{ass} and referring to Equation~\eqref{eqn:Koszul}, $\mathcal{S}_{ace}$ and $\mathcal{S}_{def}$ must both be supported on the edge $E$.  Likewise $\mathcal{S}_{abd}$ and $\mathcal{S}_{bcf}$ must both be supported on $B$.
\end{lem}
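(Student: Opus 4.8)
The plan is to reduce the four assertions to a single one by symmetry, and then dispatch that one by combining Lemma~\ref{lem:KoszulSupport} with Assumption~\ref{ass}(2) to force a free vertex, contradicting Assumption~\ref{ass}(1).

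First I would record the symmetry. The complete graph $K_4$ has a Klein four-group of automorphisms stabilizing the pair of opposite edges $\{02,13\}$ (equivalently, the pair $\{B,E\}$), generated by the vertex permutations $(0\,1)(2\,3)$ and $(0\,2)(1\,3)$. Each such automorphism fixes $b+e$, $a+f$ and $c+d$, hence preserves Assumption~\ref{ass}(2); it carries Equation~\eqref{eqn:Koszul} to an equation of the same shape, since it permutes the triangles of $K_4$ and hence the local syzygies $\mathcal{S}_{abd},\mathcal{S}_{ace},\mathcal{S}_{bcf},\mathcal{S}_{def}$ among themselves; and it plainly preserves Assumption~\ref{ass}(1). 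I would then check that the induced action on the four statements ``$\mathcal{S}_{ace}$ (resp.\ $\mathcal{S}_{def}$) is supported on $E$'' and ``$\mathcal{S}_{abd}$ (resp.\ $\mathcal{S}_{bcf}$) is supported on $B$'' is transitive — for instance $(0\,1)(2\,3)$ interchanges the first with the third and the second with the fourth, while $(0\,2)(1\,3)$ interchanges the first two — so that it suffices to prove a single one of them, say that $\mathcal{S}_{def}$ is supported on $E$.

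For that one statement I would argue by contradiction: assume $\mathcal{S}_{def}$ is not supported on $E$. The first bullet of Lemma~\ref{lem:KoszulSupport} then gives $e\ge a+c-1$. Feeding this into Assumption~\ref{ass}(2), the inequality $b+e\le c+d$ yields $d\ge a+b-1$ and the inequality $b+e\le a+f$ yields $f\ge b+c-1$. So $\m$ would satisfy all three of $d\ge a+b-1$, $e\ge a+c-1$, $f\ge b+c-1$; read off the edge labels of Figure~\ref{fig:K4} these are $m_{12}\ge m_{01}+m_{02}-1$, $m_{13}\ge m_{01}+m_{03}-1$, $m_{23}\ge m_{02}+m_{03}-1$, i.e.\ exactly the three inequalities of Definition~\ref{defn:FreeVertex} that make vertex $0$ a free vertex — contradicting Assumption~\ref{ass}(1). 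Hence $\mathcal{S}_{def}$ is supported on $E$, and the remaining three assertions follow by applying the automorphisms above.

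The substance of the argument lies in Lemma~\ref{lem:KoszulSupport}, which is already available; the only new point is the clean observation that the single inequality it produces, together with the minimality of $b+e$ encoded in Assumption~\ref{ass}(2), immediately generates all three free-vertex inequalities at one vertex. I expect the main thing to be careful about is the bookkeeping of the symmetry — confirming that every clause of Assumptions~\ref{ass}, and in particular Equation~\eqref{eqn:Koszul}, is genuinely preserved and that the four conclusions form a single orbit. If one would rather avoid the symmetry reduction, each of the other three conclusions can instead be obtained by the identical three-line computation, using the corresponding bullet of Lemma~\ref{lem:KoszulSupport} to locate a (different) free vertex.
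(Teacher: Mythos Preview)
Your proof is correct and follows essentially the same approach as the paper: assume a summand misses its required support, apply Lemma~\ref{lem:KoszulSupport} to obtain one inequality, and combine with Assumption~\ref{ass}(2) to produce the remaining two inequalities making vertex $0$ free. The only difference is cosmetic---you make the Klein four-group symmetry explicit, whereas the paper first restates the goal as four strict inequalities $e<a+c-1$, $e<d+f-1$, $b<c+f-1$, $b<a+d-1$ and then proves one ``by symmetry''; your careful bookkeeping of the orbit is a nice touch but not required.
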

\begin{proof}
In light of Lemma~\ref{lem:KoszulSupport}, it suffices to show that we have the four strict inequalities $e<a+c-1, e<d+f-1, b<c+f-1,$ and $b<a+d-1$.  We show the inequality $e<a+c-1$; the rest follow by symmetry.  Suppose to the contrary that $e\ge a+c-1$.  Then, since $b+e\le\min\{a+f,c+d\}$,
\[
\begin{array}{rl}
a+f\ge b+e\ge b+a+c-1\\
c+d\ge b+e\ge b+a+c-1.
\end{array}
\]
Consequently we have $f\ge b+c-1$ and $d\ge b+a-1$.  Since we assumed $e\ge a+c-1$, we conclude that vertex $0$ is a free vertex, violating Assumption~\ref{ass}.(1).
\end{proof}

\begin{notation}
We say that an edge is in the \textit{support} of the local expression~\eqref{eqn:Koszul} for $K_{be}$ if it is in the support of one of the summands.
\end{notation}

\begin{lem}\label{lem:FullSupport}
	In the local expression
	\[
	K_{be}=\mathcal{S}_{abd}+\mathcal{S}_{ace}+\mathcal{S}_{bcf}+\mathcal{S}_{def},
	\]
	each summand must be supported on three edges.
\end{lem}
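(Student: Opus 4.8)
The plan is to track supports in~\eqref{eqn:Koszul}. First I normalize: since $K_{be}$ is homogeneous of degree $b+e$, I may assume each of $\mathcal{S}_{abd},\mathcal{S}_{ace},\mathcal{S}_{bcf},\mathcal{S}_{def}$ is homogeneous of degree $b+e$. Call $A,C,D,F$ the \emph{external} edges; each belongs to exactly two of the four triangles ($A$ to $\{A,B,D\}$ and $\{A,C,E\}$; $C$ to $\{A,C,E\}$ and $\{B,C,F\}$; $D$ to $\{A,B,D\}$ and $\{D,E,F\}$; $F$ to $\{B,C,F\}$ and $\{D,E,F\}$). Since $K_{be}$ is supported on $\{B,E\}$, comparing the coefficients of $[A],[C],[D],[F]$ on the two sides of~\eqref{eqn:Koszul} shows that each external edge lies in the support of \emph{both} or \emph{neither} of the two summands containing it. By Lemma~\ref{lem:ChoosingPair}, each summand is supported on its internal edge ($B$ for $\mathcal{S}_{abd},\mathcal{S}_{bcf}$, and $E$ for $\mathcal{S}_{ace},\mathcal{S}_{def}$), so ``every summand is supported on three edges'' is equivalent to ``all of $A,C,D,F$ appear in the local expression for $K_{be}$.'' I will assume for contradiction that some external edge is absent.

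The next step is to reduce to a single case. The automorphisms of $K_4$ fixing the opposite pair of edges $\{B,E\}=\{\{0,2\},\{1,3\}\}$ form a Klein four-group acting on $\{A,C,D,F\}$ as $\{e,(AD)(CF),(AC)(DF),(AF)(CD)\}$, and this action is transitive; moreover each such automorphism fixes $K_{be}$, permutes $\{K_{af},K_{cd}\}$, fixes $\{b,e\}$, permutes $\{a+f,c+d\}$, and preserves the no-free-vertex hypothesis, hence preserves Assumptions~\ref{ass}. So I may assume $A$ is absent. Then $\mathcal{S}_{abd}$ is supported on exactly $\{B,D\}$ (it contains $B$, hence is a nonzero syzygy, and a nonzero syzygy cannot have one-element support), so it is a nonzero polynomial multiple of the Koszul syzygy on $B,D$, which has degree $b+d$; comparing degrees, the multiplier has degree $e-d$, whence $e\ge d$. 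Symmetrically $\mathcal{S}_{ace}$ is a multiple of the Koszul syzygy on $C,E$, forcing $b\ge c$. Now Assumption~\ref{ass}(2) gives $b+e\le c+d$, so $e=d$ and $b=c$, and the two multipliers are nonzero \emph{constants} $p_1,p_2$.

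Finally, with $b=c$ and $d=e$, write $B=y^b$, $C=z^b$, $D=(x-y)^d$, $E=(x-z)^d$, and $K_{be}=E[B]-B[E]$. Matching the coefficients of $[B],[C],[D],[E],[F]$ in~\eqref{eqn:Koszul} determines $\mathcal{S}_{bcf}$ and $\mathcal{S}_{def}$ up to their $[F]$-coefficients, and the syzygy condition for $\mathcal{S}_{def}$ reduces to the requirement that $(y-z)^{f}$ divide $B(E-p_1 D)-p_2\,CE$. Restricting the latter polynomial to $y=z$ gives $(1-p_1-p_2)\,y^b(x-y)^d$, forcing $p_1+p_2=1$; and if $f\ge 2$ then $\partial/\partial z$ of this polynomial, restricted to $y=z$, must also vanish, which yields $e\,p_1\,y^{b}(x-y)^{d-1}+b\,p_2\,y^{b-1}(x-y)^{d}=0$ and hence, in characteristic zero with $b,d\ge 1$, that $p_1=p_2=0$, contradicting $p_1+p_2=1$. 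So $f=1$. But then Assumption~\ref{ass}(2) forces $b+e\le a+f=a+1$, i.e.\ $a\ge b+d-1$; together with $c=b$ and $e=d$ this gives $m_{01}\ge m_{02}+m_{12}-1$, $m_{03}\ge m_{02}+m_{23}-1$, and $m_{13}\ge m_{12}+m_{23}-1$, so vertex $2$ is a free vertex, contradicting Assumption~\ref{ass}(1). Hence no external edge is absent, and every summand is supported on three edges. The one genuinely computational step is the divisibility analysis in this last paragraph; the rest is bookkeeping with supports, degrees, and the symmetry of $K_4$, and I expect the divisibility computation (in particular handling the case $f\ge 2$) to be the main obstacle.
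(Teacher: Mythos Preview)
Your proof is correct and follows the same overall arc as the paper's: reduce to the case that $A$ is absent, deduce $b=c$ and $d=e$ by comparing degrees of Koszul syzygies, force $f=1$, and obtain a free vertex. Two points of comparison are worth noting.

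First, your opening observation---that each external edge $A,C,D,F$ appears in \emph{both or neither} of its two local summands, and hence (given Lemma~\ref{lem:ChoosingPair}) ``full support of each summand'' is equivalent to ``all four external edges appear''---is exactly the paper's final paragraph, moved to the front. Combined with your transitive Klein-four symmetry, this lets you bypass the paper's separate treatment of the ``two opposite edges absent'' and ``two adjacent edges absent'' cases entirely. This is a genuine streamlining.

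Second, your method for extracting $f=1$ is different. The paper argues that the $[C]$-coefficient forces $EC\in\langle B,F\rangle$; since $\langle B,F\rangle$ is $(B,F)$-primary and no power of $E$ lies in it, $C\in\langle B,F\rangle$, whence $c\ge b+f-1$ and so $f=1$. You instead analyze the divisibility $(y-z)^{f}\mid BE-p_{1}BD-p_{2}CE$ by Taylor-expanding in $z$ at $z=y$: the constant term yields $p_{1}+p_{2}=1$, and the first-order term yields $p_{1}=p_{2}=0$ when $f\ge 2$, a contradiction. Both arguments are short; the paper's is more structural (and generalizes cleanly if one ever replaces the linear forms by other regular sequences), while yours is a direct elementary computation that makes the role of characteristic zero explicit.
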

\begin{proof}
	By Lemma~\ref{lem:ChoosingPair}, we already know that each summand is supported on either $E$ or $B$.  Next we claim that the local expression for $K_{be}$ must be supported on at least three of the edges $A,C,D,$ and $F$.  Suppose to the contrary that two of these edges are absent from the support.  Up to symmetry there are two possibilities: either the two edges are adjacent ($A$ and $C$) or the two edges are opposite ($A$ and $F$).  In the first case, we have $\mathcal{S}_{ace}=0$, contradicting Lemma~\ref{lem:ChoosingPair}.  If the local expression is supported on $A$ and $F$, then this forces
	\[
	K_{be}=K_{bd}+K_{ce}+K_{bc}+K_{de},
	\]
	which is impossible due to degree considerations, as we now explain.  Looking at coefficients on $[C]$ yields $(pE+qB)[C]=0$, so $pE+qB=0$.  Since $B$ and $E$ have no common factor, $\deg(pE)\ge b+e$, so $\deg (K_{ce})\ge b+c+e>b+e$, a contradiction.  (If $p=q=0$ then $\mathcal{S}_{ace}=\mathcal{S}_{def}=0$, again contradicting Lemma~\ref{lem:ChoosingPair}.)
	
	Now suppose that the local expression for $K_{be}$ is supported on all but one of the edges $A,C,D,$ and $F$, without loss of generality the edge $A$.  Then we have the equation below.
	\[
	\begin{array}{rcrrrrrrl}
	K_{be}=&&(&E[B]& & &-B[E] & &)\\
	=&&g_{abd}(&D[B]&&-B[D]&&&)\\
	&+&g_{ace}(&&E[C]&&-C[E]&&)\\
	&+&g_{bcf}(&j_{b}[B]&+j_{c}[C]&&&j_{f}[F]&)\\
	&+&g_{def}(&&&h_{d}[D]&+h_{e}[E]&+h_{f}[F]&).
	\end{array}
	\]
	Equating coefficients on $[B]$ and inspecting degrees yields $d\le e$, while equating coefficients on $[E]$ yields $c\le b$.  Since $b+e\le c+d$, this implies $d=e$ and $c=b$.
	
	Since $c+e=b+e$, we conclude that $g_{ace}$ is a scalar, so, looking
	at the coefficients on $[C]$, we conclude that (up to scalar)
	$g_{bcf}j_{c}=E$.  Thus $\mathcal{S}_{bcf}$ is equivalent (up to
	scalar) to $EC=g_{bcf}j_{b}B+g_{bcf}j_{f}F$, and we conclude $EC\in
	\langle B,F\rangle$.  But $\langle B,F\rangle$ is a primary ideal and $E^n$ is not in $\langle B,F\rangle$ for any $n$ (since $B,E,F$ form a regular sequence), so $C\in \langle B,F\rangle$, i.e. $b+f\le c+1$.  Since $b=c$, this implies $f=1$.  But then we have the multiplicity $(a,b,b,d,d,1)$ and $b+e=b+d\le a+f=a+1$, so we have a free vertex (in fact, vertices $2$ and $3$ are both free), a contradiction.

	Now we show that each of the local syzygies is supported on all three of its edges.  It is enough to do this for $\mathcal{S}_{abd}=\alpha_{abd}[A]+\beta_{abd}[B]+\delta_{abd}[D]$.  We already know from Lemma~\ref{lem:ChoosingPair} that $\mathcal{S}_{ace}$ is supported on $B$ (i.e. $\beta_{abd}\neq 0$).  It suffices to show that $\mathcal{S}_{abd}$ is supported on $A$ (the argument for support on $D$ is the same).  Adding coefficients on $[A]$ in Equation~\eqref{eqn:Koszul} yields $\alpha_{abd}+\alpha_{ace}=0$.  If $\mathcal{S}_{abd}$ is not supported on $A$, then $\alpha_{abd}=0$, so $\alpha_{ace}=0$ as well.  Then the local expression~\eqref{eqn:Koszul} is not supported on $A$, a contradiction.
\end{proof}

We are now ready to complete the proof of Theorem~\ref{thm:intro}.

\begin{prop}\label{prop:FreeInequalities}
If $\m$ is a free multiplicity without a free vertex, then $m_{ij}\le m_{ik}+m_{jk}+1$ for every triple $i,j,k$.
\end{prop}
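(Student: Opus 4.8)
The plan is to combine Assumptions~\ref{ass} with the structural information about the local expression~\eqref{eqn:Koszul} obtained in Lemmas~\ref{lem:ChoosingPair} and~\ref{lem:FullSupport}, and then argue that the degree constraints forced on the summands are so rigid that the multiplicities must satisfy all twelve inequalities $m_{ij}\le m_{ik}+m_{jk}+1$. Recall the labeling: the twelve inequalities are $a\le b+d+1$, $b\le a+d+1$, $d\le a+b+1$ (from triangle $012$) and the analogous triples from triangles $013$, $023$, $123$. By Assumption~\ref{ass}.(2), $K_{be}$ is the least-degree non-local Koszul syzygy, so $b+e\le a+f$ and $b+e\le c+d$; it is worth noting that a genuine violation of one of the twelve inequalities forces one of the local ideals to drop a generator, which is exactly the situation we want to rule out.

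First I would extract the degree consequences of Lemma~\ref{lem:FullSupport}. Since each summand $\mathcal{S}_{abd},\mathcal{S}_{ace},\mathcal{S}_{bcf},\mathcal{S}_{def}$ is supported on all three of its edges, and (by Lemma~\ref{lem:ChoosingPair}) $\mathcal{S}_{ace}$ and $\mathcal{S}_{def}$ both carry $[E]$ while $\mathcal{S}_{abd}$ and $\mathcal{S}_{bcf}$ both carry $[B]$, comparing coefficients on $[A],[C],[D],[F]$ in Equation~\eqref{eqn:Koszul} produces cancellation relations: the $[A]$-coefficient of $\mathcal{S}_{abd}$ cancels that of $\mathcal{S}_{ace}$, the $[C]$-coefficient of $\mathcal{S}_{ace}$ cancels that of $\mathcal{S}_{bcf}$, the $[D]$-coefficient of $\mathcal{S}_{abd}$ cancels that of $\mathcal{S}_{def}$, and the $[F]$-coefficient of $\mathcal{S}_{bcf}$ cancels that of $\mathcal{S}_{def}$. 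Each cancellation equates two nonzero module elements, and since $A,B,C,D,E,F$ pairwise share no common factor (they are powers of distinct linear forms), these relations bound the degrees of the coefficient polynomials $g_{abd},g_{ace},g_{bcf},g_{def}$ from below. The key point is that the overall syzygy $K_{be}$ has degree exactly $b+e$, so every term appearing must have degree $\le b+e$; this caps the coefficient degrees from above. Chasing these upper and lower bounds around the 4-cycle $\mathcal{S}_{abd}\to\mathcal{S}_{ace}\to\mathcal{S}_{bcf}\to\mathcal{S}_{def}\to\mathcal{S}_{abd}$ should yield a chain of inequalities among $a,b,c,d,e,f$ that, together with $b+e\le\min\{a+f,c+d\}$, pin down the multiplicities tightly enough.

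The main step, and the one I expect to be the real obstacle, is converting these degree bounds into the twelve desired inequalities without losing any case. The strategy is: suppose for contradiction that some inequality fails, say (after using symmetry to reduce the number of cases) $e>a+c-1$ or $b>c+f-1$ or one of the remaining ten. Lemma~\ref{lem:ChoosingPair} already rules out the four inequalities $e\ge a+c-1$, $e\ge d+f-1$, $b\ge c+f-1$, $b\ge a+d-1$ under Assumptions~\ref{ass}, so what remains is to handle violations of the other eight inequalities—those internal to a single triangle in a way that does not immediately produce a free vertex. For each such putative violation I would feed the corresponding generator-redundancy into Equation~\eqref{eqn:Koszul}: a redundant generator means the associated local syzygy module is generated in a different (lower) degree, which, when combined with the full-support conclusion of Lemma~\ref{lem:FullSupport} and the degree cap $b+e$, forces a numerical contradiction or forces a free vertex, contradicting Assumption~\ref{ass}.(1). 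The bookkeeping is delicate because the role played by $B$ and $E$ (the "pivot" edges) is asymmetric from that of $A,C,D,F$, so the symmetry reductions must be stated carefully; I would organize the argument as a short sequence of sub-lemmas, one per orbit of inequalities under the symmetry group preserving the pair $\{b,e\}$, each dispatched by the same template: redundancy $\Rightarrow$ degree drop $\Rightarrow$ contradiction with $\deg K_{be}=b+e$ or with no-free-vertex.

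Once all twelve inequalities are established, the proposition is proved; Proposition~\ref{prop:BigCellMultiplicities} then completes the classification, since a free multiplicity satisfying all twelve inequalities and having no free vertex must be a free ANN multiplicity.
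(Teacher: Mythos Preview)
Your setup is right—Assumptions~\ref{ass}, Lemma~\ref{lem:ChoosingPair}, and Lemma~\ref{lem:FullSupport} are exactly the ingredients, and you correctly note that the four inequalities with $b$ or $e$ on the left are already handled (strictly) in the proof of Lemma~\ref{lem:ChoosingPair}. The gap is in how you propose to extract the remaining eight inequalities.

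The coefficient-cancellation idea does not produce the lower bounds you need. When you write ``the coefficient polynomials $g_{abd},g_{ace},g_{bcf},g_{def}$'' you are implicitly treating each $\mathcal{S}_{ijk}$ as a polynomial multiple of a single fixed syzygy, but $\syz(J(ijk))$ has rank two, so there is no such single multiplier. What the cancellation on $[A],[C],[D],[F]$ actually gives is just equality of certain coefficients (e.g.\ the $[A]$-coefficients of $\mathcal{S}_{abd}$ and $\mathcal{S}_{ace}$ are negatives of each other); since all four summands are already homogeneous of degree $b+e$, this yields no new degree information. Your fallback plan—assume a violation, feed the resulting generator-redundancy back into~\eqref{eqn:Koszul}, and hope for a numerical contradiction—is a plausible template but you give no mechanism, and the redundancy in question (say $a>b+d+1$, so $A$ is redundant in $J(012)$) does not by itself conflict with anything you have written down.

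What is missing is the one numerical fact that makes the argument immediate: by Lemma~\ref{lem:TriangleSyzygies}, a syzygy on $J(ijk)$ supported on all three generators has degree at least $\tfrac{1}{2}(m_{ij}+m_{ik}+m_{jk}-1)$. Once you have this, the eight remaining inequalities drop out in one line each. For instance, to get $a\le b+d+1$: Lemma~\ref{lem:FullSupport} says $\mathcal{S}_{ace}$ has full support, so $\deg\mathcal{S}_{ace}\ge\tfrac{1}{2}(a+c+e-1)$; but $\deg\mathcal{S}_{ace}=b+e$ and $b+e\le c+d$ gives $b+e\le\tfrac{1}{2}(b+c+d+e)$; chaining these yields $\tfrac{1}{2}(a+c+e-1)\le\tfrac{1}{2}(b+c+d+e)$, i.e.\ $a\le b+d+1$. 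The other seven follow by the evident symmetry. No contradiction argument, no cycle-chasing, no case analysis on redundancy is needed.
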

\begin{proof}
Theorem~\ref{thm:FreeEquiv} guarantees Assumption~\ref{ass}.(3), so we may take all of Assumptions~\ref{ass} without loss.  By the proof of Lemma~\ref{lem:ChoosingPair}, we already have (stricter versions of) the four inequalities $b\le c+f+1,b\le a+d+1,e\le a+c+1,$ and $e\le d+f+1$.  Hence we need to establish the eight remaining inequalities with $a,c,d,$ and $f$ on the left-hand side.  We demonstrate the inequality $a\le b+d+1$.  By symmetry, the remaining seven inequalities are established in precisely the same way.
	
Since $b+e\le c+d$, we have $b+e\le (b+c+d+e)/2$.  By Lemma~\ref{lem:FullSupport}, $\mathcal{S}_{ace}$ is supported on $[A],[C],$ and $[E]$.  It follows that the degree of $\mathcal{S}_{ace}$ is at least $(a+c+e-1)/2$ by Lemma~\ref{lem:TriangleSyzygies}.  Since $\mathcal{S}_{ace}$ appears in the expression for $K_{be}$, we have
\[
\dfrac{a+c+e-1}{2}\le b+e\le \dfrac{b+c+d+e}{2}.
\]
Simplifying yields $a\le b+d+1$, as desired.
\end{proof}

\begin{proof}[Proof of Theorem~\ref{thm:intro}]
Suppose $\m$ is a free multiplicity without a free vertex.  By Proposition~\ref{prop:FreeInequalities}, $m_{ij}\le m_{ik}+m_{jk}+1$ for every triple $i,j,k$.  By Proposition~\ref{prop:BigCellMultiplicities}, $\m$ must be a free ANN multiplicity.
\end{proof}

\begin{remark}\label{rem:Abe}
	A \textit{deformation} of the $A_3$ arrangement (technically, the \textit{cone} over a deformation of the $A_3$ arrangement) is a central hyperplane arrangement of the form
	\[
	\begin{array}{rl}
	x= & \alpha_1w,\ldots,\alpha_aw\\
	y= & \beta_1w,\ldots,\beta_bw\\
	z= & \kappa_1w,\ldots,\kappa_cw\\
	y-x= & \delta_1w,\ldots,\delta_dw\\
	z-x= & \epsilon_1w,\ldots,\epsilon_ew\\
	y-z= & \phi_1w,\ldots,\phi_fw\\
	w= & 0,
	\end{array}
	\]
	where $\alpha_i,\beta_i,\kappa_i,\delta_i,\epsilon_i,\phi_i$ are all elements of the ground field $\kk$.  Arrangements of this type were first investigated systematically by Stanley~\cite{StanleyIntervalOrders} and have since been the subject of many research papers.
	
	Our results may be used to show that freeness of a deformation of the $A_3$ arrangement can be detected just from its intersection lattice.  This is readily deduced from general characterizations of freeness due to Yoshinaga~\cite{YoshCharacterizationFreeArr} and Abe-Yoshinaga~\cite{AbeYoshFreeCharPoly}.  Integral to both of these characterizations is the freeness of the multi-arrangement obtained from restricting the arrangement to a chosen hyperplane, where the multiplicity assigned to each hyperplane $H$ in the restriction counts the number of hyperplanes that restrict to $H$.  In the case of a deformation of $A_3$, restricting to the hyperplane $w=0$ clearly results in the multi-arrangement $(A_3,(a,b,c,d,e,f))$; freeness of this multi-arrangement is determined from Theorem~\ref{thm:intro}.
\end{remark}

\section{Abe-Nuida-Numata multiplicities}\label{sec:ANN}

In this section we relate our results more closely to the classification of ANN multiplicities by Abe-Nuida-Numata in~\cite{AbeSignedEliminable}.  We first state their classification precisely for the $A_3$ arrangement.  We then show that the non-free multiplicities in their classification follow from Theorem~\ref{thm:Generalnon-free} and Proposition~\ref{prop:FreeInequalities}.  Finally, we illustrate how the free multiplicities in their classification may be used to provide the minimal free resolution of the ideal $J(0123)$ generated by powers of linear forms.

We introduce the notation from~\cite{AbeSignedEliminable}.  Let $G$ be a \textit{signed} graph on four vertices. That is, each edge of $G$ is assigned either a $+$ or a $-$, and so the edge set $E_G$ decomposes as a disjoint union $E_G=E_G^+\cup E_G^-$.
Define
\[
m_G(ij)=\left\lbrace
\begin{array}{rl}
1 & \{i,j\}\in E^+_G\\
-1 & \{i,j\}\in E^-_G\\
0 & \text{otherwise}.
\end{array}
\right.
\]

The graph $G$ is \textit{signed-eliminable} with \textit{signed-elimination ordering} $\nu:V(G)\rightarrow \{0,1,2,3\}$ if $\nu$ is bijective, and, for every three vertices $v_i,v_j,v_k\in V(G)$ with $\nu(v_i),\nu(v_j)<\nu(v_k)$, the induced subgraph $G|_{v_i,v_j,v_k}$ satisfies the following conditions.
\begin{itemize}
	\item For $\sigma\in\{+,1\}$, if $\{v_i,v_k\}$ and $\{v_j,v_k\}$ are edges in $E^\sigma_G$ then $\{v_i,v_j\}\in E^\sigma_G$
	\item For $\sigma\in\{+,1\}$, if $\{v_k,v_i\}\in E^\sigma_G$ and $\{v_i,v_j\}\in E^{-\sigma}_G$ then $\{v_k,v_j\}\in E_G$
\end{itemize} 
For a signed-eliminable graph $G$ with signed elimination ordering $\nu$, $v\in V_G$ and $i\in\{0,1,2,3\}$, define the degree $\widetilde{\deg}_i(v)$ by
\[
\widetilde{\deg}_i(v):=\deg(v,V_G,E_G^+|_{\nu^{-1}\{1,\ldots,i\}})-\deg(v,V_G,E_G^-|_{\nu^{-1}\{1,\ldots,i\}}),
\]
where $\deg(w,V_H,E_H)$ is the degree of the vertex $w$ in the graph $(V_H,E_H)$ and $(V_G,E^\sigma_G|_S)$ with respect to $S\subset V_G$ is the induced subgraph of $G$ whose edge set is $\{\{v_i,v_j\}\in E^\sigma_G \mid v_i,v_j\in S\}$.  Furthermore set $\widetilde{\deg}_i=\widetilde{\deg}_i(\nu^{-1}(i))$ for $i=0,1,2,3$.

All signed-eliminable graphs on four vertices are listed (with an elimination ordering) in~\cite[Example~2.1]{AbeSignedEliminable}, along with those which are not signed-eliminable.  For use in the proof of Corollary~\ref{cor:ANNmultiplicities}, we also list those graphs which are not signed-eliminable in Table~\ref{tbl:NonSignedEliminable}.  The property of being signed-eliminable is preserved under interchanging $+$ and $-$. Consequently, we list these graphs in Table~\ref{tbl:NonSignedEliminable} up to automorphism with the convention that a single edge takes one of the signs $+,-$, while a double edge takes the other sign.

\begin{thm}\cite[Theorem~0.3]{AbeSignedEliminable}\label{thm:ANNmultiplicities}
	Let $k$, $n_0$, $n_1$, $n_2$, and $n_3$ be nonnegative integers, and $G$ be a signed graph on four vertices.  Define the multiplicity $\m$ on the braid arrangement $A_3$ by $m_{ij}=2k+n_i+n_j+m_G(ij)$.  Set $N=4k+n_0+n_1+n_2+n_3$.  Assume one of the three conditions:
	\begin{enumerate}
		\item $k>0$
		\item $E^-_G=\emptyset$
		\item $E^+_G=\emptyset$ and $m_{ij}>0$ for every $\{i,j\}\in E_{K_4}$.
	\end{enumerate}
	Then $(A_3,\m)$ is free with exponents $(0,N+\widetilde{\deg}_2,N+\widetilde{\deg}_3,N+\widetilde{\deg}_3)$ if and only if $G$ is signed-eliminable.
\end{thm}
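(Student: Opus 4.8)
The plan is to treat the two implications of the ``if and only if'' separately, since only one of them draws on the machinery developed above. For the \emph{freeness} implication, suppose $G$ is signed-eliminable with signed-elimination ordering $\nu$. I would induct along $\nu$: the inductive step removes the vertex of largest $\nu$-value together with its incident edges, adjusting the labels and signs accordingly, and is handled by the addition-deletion theory for multi-arrangements of Abe-Terao-Wakefield~\cite{EulerMult}; the base case is $G$ edgeless, where $m_{ij}=2k+n_i+n_j$ is the classical free multiplicity on the braid arrangement, with exponents $(0,N,N,N)$. (This base case is \emph{not} in general covered by Corollary~\ref{cor:FirstFreeCondition}, since $\m$ may have no free vertex; it genuinely requires Terao's construction, as developed in~\cite{AbeSignedEliminable}.) Once freeness is known, the exponents are forced: $D(A_3,\m)$ is free, so its four exponents are recovered from its Hilbert polynomial, and by the remark following Example~\ref{exm:FreeDepends} that polynomial equals the Euler characteristic $\sum_i(-1)^i\sum_{\gamma\in\D(K_4)_i}HP(S/J(\gamma),d)$ of $\cR/\cJ[K_4]$. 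Substituting $m_{ij}=2k+n_i+n_j+m_G(ij)$ and simplifying should reproduce $(0,N+\widetilde{\deg}_2,N+\widetilde{\deg}_3,N+\widetilde{\deg}_3)$.

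For the \emph{non-freeness} implication --- if $G$ is not signed-eliminable then $(A_3,\m)$ is not free --- the key point is that the data controlling Theorem~\ref{thm:Generalnon-free} depend only on the signed graph $G$, not on $k$ or the $n_i$. Writing $m_{ijk}=m_{ij}+m_{jk}+m_{ik}$ as in that theorem, one computes $m_{ijk}\equiv m_G(ij)+m_G(jk)+m_G(ik)\pmod 2$ and $|\m|\equiv\sum_{\{i,j\}}m_G(ij)\pmod 3$; and since $P(\m)$ is built from the differences $m_{ij}+m_{kl}-m_{ik}-m_{jl}$, which do not change when $m_{ij}$ is replaced by $2k+n_i+n_j+m_G(ij)$ (the term $2k+n_i+n_j$ cancels), we have $P(\m)=P(m_G)$. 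Thus the parities of the $m_{ijk}$, the residue of $|\m|$ modulo $3$, and the value of $P$ are functions of $G$ alone.

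It then suffices to run through the signed graphs listed in Table~\ref{tbl:NonSignedEliminable}. For each such $G$ I would first check that no admissible choice of $k$ and the $n_i$ produces a free vertex for $\m$: a free vertex $p$ would force $m_G(qr)-m_G(pq)-m_G(pr)\ge 2(k+n_p)-1$ on the triangle opposite $p$, which bounds $k+n_p$ from above, and a short case analysis on the resulting sign patterns shows that each such pattern makes $G$ signed-eliminable, contrary to hypothesis. Having excluded free vertices, split into two cases. If all twelve inequalities $m_{ij}\le m_{ik}+m_{jk}+1$ hold, then Theorem~\ref{thm:Generalnon-free} applies: the parities of the $m_{ijk}$ together with $|\m|\bmod 3$ select one of the six bullet conditions, and one verifies that for every $G$ in the table $P(m_G)$ exceeds the threshold appearing there. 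If instead some such inequality fails, then Proposition~\ref{prop:FreeInequalities} (in contrapositive form, using that $\m$ has no free vertex) yields non-freeness at once. Either way $(A_3,\m)$ is not free.

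I expect the main obstacle to be exactly this last verification. The translation-invariance reductions are routine, but checking that $P(m_G)$ clears the appropriate threshold---or that a triangle inequality fails---for every non-signed-eliminable $G$, and, crucially (to avoid circular use of~\cite{AbeSignedEliminable}), confirming by hand that a free vertex can arise only for signed-eliminable $G$, both require going through Table~\ref{tbl:NonSignedEliminable} graph by graph. The three side conditions (1)--(3) play only the auxiliary role of ensuring that $\m$ is a bona fide nonnegative multiplicity and that the defining forms do not degenerate, so that the results invoked above apply unchanged.
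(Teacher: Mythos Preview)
This theorem is not proved in the paper at all: it is quoted as \cite[Theorem~0.3]{AbeSignedEliminable}.  What the paper does do is recover the \emph{non-free} implication by its own methods in Corollary~\ref{cor:ANNmultiplicities}, and your plan for that implication is essentially the paper's argument.  The paper observes (as you do) that $P(\m)=P(m_G)$ and that the parities of the $m_{ijk}$ depend only on $G$, and then runs through the twelve graphs of Table~\ref{tbl:NonSignedEliminable} one by one, recording for each the number of odd $m_{ijk}$ and the value $P(m_G)$ and checking that the relevant inequality of Theorem~\ref{thm:Generalnon-free} is satisfied.  Where a triangle inequality $m_{ij}\le m_{ik}+m_{jk}+1$ fails, the paper invokes Proposition~\ref{prop:BigCellMultiplicities} (really Proposition~\ref{prop:FreeInequalities}, as you say) to conclude non-freeness given no free vertex.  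So on this half your proposal and the paper agree.

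One point where you are more careful than the paper: Theorem~\ref{thm:Generalnon-free} has ``$\m$ has no free vertex'' among its hypotheses, and you correctly flag that this must be verified for each non-signed-eliminable $G$ before the case check.  The paper's proof of Corollary~\ref{cor:ANNmultiplicities}(2) applies Theorem~\ref{thm:Generalnon-free} without explicitly dispatching this hypothesis.  Your proposed argument (a free vertex $p$ forces $m_G(qr)-m_G(pq)-m_G(pr)\ge 2(k+n_p)-1$ for each pair $q,r\neq p$, bounding $k+n_p$ and then constraining the sign pattern at $p$) is the right shape, though ``the triangle opposite $p$'' should read ``each triangle containing $p$''; carrying this through for all graphs in Table~\ref{tbl:NonSignedEliminable} under each of the conditions (1)--(3) is genuine but routine work.

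For the \emph{freeness} implication the paper offers nothing beyond the citation.  Your sketch---induct along the signed-elimination ordering via addition-deletion for multi-arrangements, with base case $m_{ij}=2k+n_i+n_j$ handled by Terao's construction for Coxeter multi-arrangements---is indeed the strategy of~\cite{AbeSignedEliminable}, but be aware that executing it (including the exponent computation) is the principal content of that paper and not a short exercise.  Recovering the exponents from the Euler characteristic of $\cR/\cJ[K_4]$, as you suggest, is in principle possible once freeness is known, but it does not avoid the work: you still need freeness first, and the Euler-characteristic identity only fixes the elementary symmetric functions of the exponents, so extracting the individual $N+\widetilde{\deg}_i$ requires further input.
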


\begin{table}
	\begin{tabular}{cccccc}
		
		\begin{tikzpicture}[scale=1.0]
		\tikzstyle{dot}=[circle,fill=black,inner sep=1 pt];
		
		\node[dot] (1) {};
		\node[dot] (2)[left of=1]{};
		\node[dot] (3)[below of=2]{};
		\node[dot] (4)[below of=1]{};
		
		\draw (1)--(4) (3)--(2);
		\draw[double distance=1 pt] (3)--(4);
		\end{tikzpicture}
		&
		\begin{tikzpicture}[scale=1.0]
		\tikzstyle{dot}=[circle,fill=black,inner sep=1 pt];
		
		\node[dot] (1) {};
		\node[dot] (2)[left of=1]{};
		\node[dot] (3)[below of=2]{};
		\node[dot] (4)[below of=1]{};
		
		\draw (1)--(2) (1)--(3);
		\draw[double distance=1 pt] (1)--(4);
		\end{tikzpicture}
		&
		\begin{tikzpicture}[scale=1.0]
		\tikzstyle{dot}=[circle,fill=black,inner sep=1 pt];
		
		\node[dot] (1) {};
		\node[dot] (2)[left of=1]{};
		\node[dot] (3)[below of=2]{};
		\node[dot] (4)[below of=1]{};
		
		\draw (1)--(2)--(3)--(4)--(1);
		\end{tikzpicture}
		&
		\begin{tikzpicture}[scale=1.0]
		\tikzstyle{dot}=[circle,fill=black,inner sep=1 pt];
		
		\node[dot] (1) {};
		\node[dot] (2)[left of=1]{};
		\node[dot] (3)[below of=2]{};
		\node[dot] (4)[below of=1]{};
		
		\draw (4)--(1)--(2)--(3);
		\draw[double distance=1 pt] (3)--(4);
		\end{tikzpicture}
		&
		\begin{tikzpicture}[scale=1.0]
		\tikzstyle{dot}=[circle,fill=black,inner sep=1 pt];
		
		\node[dot] (1) {};
		\node[dot] (2)[left of=1]{};
		\node[dot] (3)[below of=2]{};
		\node[dot] (4)[below of=1]{};
		
		\draw (1)--(4)--(2)--(3);
		\draw[double distance=1 pt] (3)--(4);
		\end{tikzpicture}
		&	
		\begin{tikzpicture}[scale=1.0]
		\tikzstyle{dot}=[circle,fill=black,inner sep=1 pt];
		
		\node[dot] (1) {};
		\node[dot] (2)[left of=1]{};
		\node[dot] (3)[below of=2]{};
		\node[dot] (4)[below of=1]{};
		
		\draw (2)--(4)--(3);
		\draw[double distance=1 pt] (2)--(3) (1)--(4);
		\end{tikzpicture}
		\\
		\begin{tikzpicture}[scale=1.0]
		\tikzstyle{dot}=[circle,fill=black,inner sep=1 pt];
		
		\node[dot] (1) {};
		\node[dot] (2)[left of=1]{};
		\node[dot] (3)[below of=2]{};
		\node[dot] (4)[below of=1]{};
		
		\draw (1)--(4) (3)--(2);
		\draw[double distance=1 pt] (1)--(2) (3)--(4);
		\end{tikzpicture}
		&
		\begin{tikzpicture}[scale=1.0]
		\tikzstyle{dot}=[circle,fill=black,inner sep=1 pt];
		
		\node[dot] (1) {};
		\node[dot] (2)[left of=1]{};
		\node[dot] (3)[below of=2]{};
		\node[dot] (4)[below of=1]{};
		
		\draw (1)--(2)--(3)--(4)--(1);
		\draw[double distance=1 pt] (1)--(3);
		\end{tikzpicture}
		&
		\begin{tikzpicture}[scale=1.0]
		\tikzstyle{dot}=[circle,fill=black,inner sep=1 pt];
		
		\node[dot] (1) {};
		\node[dot] (2)[left of=1]{};
		\node[dot] (3)[below of=2]{};
		\node[dot] (4)[below of=1]{};
		
		\draw (1)--(2)--(3)--(4);
		\draw[double distance=1 pt] (3)--(1)--(4);
		\end{tikzpicture}
		&
		\begin{tikzpicture}[scale=1.0]
		\tikzstyle{dot}=[circle,fill=black,inner sep=1 pt];
		
		\node[dot] (1) {};
		\node[dot] (2)[left of=1]{};
		\node[dot] (3)[below of=2]{};
		\node[dot] (4)[below of=1]{};
		
		\draw (4)--(1)--(3)--(2);
		\draw[double distance=1 pt] (1)--(2) (3)--(4);
		\end{tikzpicture}
		&
		\begin{tikzpicture}[scale=1.0]
		\tikzstyle{dot}=[circle,fill=black,inner sep=1 pt];
		
		\node[dot] (1) {};
		\node[dot] (2)[left of=1]{};
		\node[dot] (3)[below of=2]{};
		\node[dot] (4)[below of=1]{};
		
		\draw (1)--(4)--(2)--(3);
		\draw[double distance=1 pt] (2)--(1)--(3)--(4);
		\end{tikzpicture}
		&
		\begin{tikzpicture}[scale=1.0]
		\tikzstyle{dot}=[circle,fill=black,inner sep=1 pt];
		
		\node[dot] (1) {};
		\node[dot] (2)[left of=1]{};
		\node[dot] (3)[below of=2]{};
		\node[dot] (4)[below of=1]{};
		
		\draw (1)--(2)--(4)--(3)--(1);
		\draw[double distance=1 pt] (2)--(3) (1)--(4);
		\end{tikzpicture}
	\end{tabular}
	
	\caption{Graphs on four vertices which are \textit{not} signed-eliminable}\label{tbl:NonSignedEliminable}
\end{table}

We first show how we can recover the non-free ANN multiplicities on $A_3$ using Theorem~\ref{thm:Generalnon-free}.

\begin{cor}\label{cor:ANNmultiplicities}
Let $k,n_0,n_1,n_2,$ and $n_3$ be non-negative integers, let $G$ be a signed graph on $K_4$, and let $\m$ be the ANN multiplicity $m_{ij}=2k+n_i+n_j+m_G(ij)$.  If one of the two following conditions is satisfied, then $\m$ is not a free multiplicity.
\begin{enumerate}
\item One or more of the inequalities $\{m_{ij}+m_{ik}+1\ge m_{jk}\mid 0\le i<j<k\le 3\}$ fails and $\m$ does not have a free vertex.
\item All of the inequalities $\{m_{ij}+m_{ik}+1\ge m_{jk}\mid 0\le i<j<k\le 3\}$ are satisfied and $G$ is not signed-eliminable.
\end{enumerate}

\end{cor}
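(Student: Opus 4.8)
The plan is to dispatch the two cases separately, reducing each to a result already proved. Case~(1) is just the contrapositive of Proposition~\ref{prop:FreeInequalities}: if some inequality $m_{ij}+m_{ik}+1\ge m_{jk}$ fails while $\m$ has no free vertex, then $\m$ cannot be free, since that proposition asserts that every free multiplicity without a free vertex satisfies all twelve of those inequalities. So Case~(1) requires no further argument.

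For Case~(2) the key observation is that the arithmetic data feeding Theorem~\ref{thm:Generalnon-free} depends only on the signed graph $G$, not on $k$ or the $n_i$. Indeed, in any difference of the form $m_{ij}+m_{kl}-m_{ik}-m_{jl}$ the terms $2k$ and $n_0,n_1,n_2,n_3$ all cancel, so $P(\m)=P(m_G)$; the sum $m_{ijk}=m_{ij}+m_{jk}+m_{ik}$ differs from $m_G(ij)+m_G(jk)+m_G(ik)$ by an even integer, so $m_{ijk}$ is odd precisely when the subgraph of $G$ induced on $\{i,j,k\}$ has an odd number of edges; and $|\m|\equiv |E_G^+|-|E_G^-|\pmod 3$. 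Granting for the moment that such an $\m$ has no free vertex, the hypotheses of Theorem~\ref{thm:Generalnon-free} are met (its assumption $m_{ij}\le m_{ik}+m_{jk}+1$ is exactly hypothesis~(2) of the corollary), and verifying that one of its six bullet conditions holds becomes a finite computation: for each of the twelve graphs listed in Table~\ref{tbl:NonSignedEliminable} one computes $P(m_G)$, the parities of the four triangle edge-counts, and $|E_G^+|-|E_G^-|\bmod 3$, and checks that $\m$ lands in the matching bullet. Carrying out this bookkeeping (for which a computer algebra system is convenient, as for Theorem~\ref{thm:Generalnon-free}) finishes Case~(2) once the no-free-vertex claim is in place.

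It therefore remains to show that an ANN multiplicity satisfying all twelve triangle inequalities, with $G$ not signed-eliminable, has no free vertex. If vertex $i$ were free, then combining its three defining inequalities $m_{jk}\ge m_{ij}+m_{ik}-1$ with the triangle inequalities $m_{jk}\le m_{ij}+m_{ik}+1$ yields $2(k+n_i)-1\le m_G(jk)-m_G(ij)-m_G(ik)\le 2(k+n_i)+1$ for every pair $\{j,k\}$ disjoint from $i$. Since the middle quantity lies in $\{-3,\dots,3\}$, this forces $k+n_i\le 2$ and constrains the signs of the three edges at $i$ and of the opposite triangle to a short list of rigid patterns (for instance, $k+n_i=2$ forces all three edges at $i$ to be $-$ and the opposite triangle to be all $+$). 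A brief case check against Table~\ref{tbl:NonSignedEliminable} shows that each resulting $G$ is in fact signed-eliminable, contradicting the hypothesis; hence $\m$ has no free vertex, and Theorem~\ref{thm:Generalnon-free} applies.

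The main obstacle is organizational rather than conceptual: the argument rests on an exhaustive pass through the twelve non-signed-eliminable graphs, correctly aligned with the parity and congruence cases of Theorem~\ref{thm:Generalnon-free}, together with the boundary analysis underlying the no-free-vertex claim. No individual step is deep, but the enumeration must be complete and the case matching precise, which is where slips are easiest to make.
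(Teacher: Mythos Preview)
Your approach matches the paper's: Case~(1) is disposed of by the contrapositive of Proposition~\ref{prop:FreeInequalities}, and Case~(2) by reducing the data of Theorem~\ref{thm:Generalnon-free} to the signed graph $G$ and running through Table~\ref{tbl:NonSignedEliminable}. Two small remarks on execution. First, you track $|\m|\bmod 3$ via $|E_G^+|-|E_G^-|$, which is correct but unnecessary: for each parity pattern of the $m_{ijk}$, the threshold on $P(\m)$ is stricter when $|\m|\equiv 0\pmod 3$, so it suffices to check each of the twelve graphs against that larger threshold, exactly as the paper does (e.g.\ ``two of $m_{ijk}$ odd, $P(m_G)=14>6$''). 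Second, you are right to flag the no-free-vertex hypothesis of Theorem~\ref{thm:Generalnon-free}; the paper's own proof does not explicitly verify it. Your reduction of that check to the constraint $k+n_i\le 2$ and a short sign-pattern analysis against Table~\ref{tbl:NonSignedEliminable} is the correct way to close the gap, though the ``brief case check'' you invoke should ultimately be written out to be complete.
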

\begin{proof}
	If the ANN multiplicity fails one or more of the inequalities $\{m_{ij}+m_{ik}+1\ge m_{jk}\mid 0\le i<j<k\le 3\}$, then it is free if and only if it has a free vertex by Proposition~\ref{prop:BigCellMultiplicities}, completing the proof of $(1)$.
	
	
	We now assume the inequalities $m_{ij}+m_{ik}+1\ge m_{jk}$ on all triples $0\le i<j<k\le 3$.  We apply Theorem~\ref{thm:Generalnon-free}.  It is evident that $P(m_{ij})=P(2k+n_i+n_j+m_G(ij))=P(m_G(ij))$.  Hence it is enough to show that $P(m_G(ij))$ satisfies one of the inequalities of Theorem~\ref{thm:Generalnon-free} if $G$ is not signed-eliminable.  This can be verified on a case-by-case basis; going across Table~\ref{tbl:NonSignedEliminable} from left to right and top to bottom:
	
	\begin{itemize}
		\item Two of $m_{ijk}$ odd, $P(m_G(ij))=14>6$
		\item None of $m_{ijk}$ odd, $P(m_G(ij))=8>0$
		\item None of $m_{ijk}$ odd, $P(m_G(ij))=8>0$
		\item None of $m_{ijk}$ odd, $P(m_G(ij))=8>0$
		\item Two of $m_{ijk}$ odd, $P(m_G(ij))=14>6$
		\item Two of $m_{ijk}$ odd, $P(m_G(ij))=18>6$
		\item None of $m_{ijk}$ odd, $P(m_G(ij))=24>0$
		\item Two of $m_{ijk}$ odd, $P(m_G(ij))=18>6$
		\item Two of $m_{ijk}$ odd, $P(m_G(ij))=14>6$
		\item Two of $m_{ijk}$ odd, $P(m_G(ij))=26>6$
		\item All of $m_{ijk}$ odd, $P(m_G(ij))=24>12$
		\item All of $m_{ijk}$ odd, $P(m_G(ij))=32>12$ \qedhere 
	\end{itemize}

\end{proof}

We conclude by remarking on how to use free ANN multiplicities to construct the minimal free resolution of the ideal $S/J(0123)$.

\begin{cor}\label{cor:ThirdSyzygy}
	The multi-arrangement $(A_3,\m)$ is free if and only if $D(A_3,\m)$ is a third syzygy module of $S/J(0123)$ (in a non-minimal resolution).
\end{cor}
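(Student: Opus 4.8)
The plan is to extract from the derivation complex $\cR/\cJ[K_4]$ a free resolution of $S/J(0123)$ in which $D(A_3,\m)$ occurs precisely as the third syzygy module. Recall that $D(A_3,\m)=H^0(\cR/\cJ[K_4])$ by Lemma~\ref{lem:firstCohomology}, and that $\D(K_4)$ has four vertices (on which $J$ vanishes), six edges, four triangles, and one tetrahedron, so the terms of $\cR/\cJ[K_4]$ are $S^4$, $\bigoplus_{\{i,j\}}S/\langle\alpha_{ij}^{m_{ij}}\rangle$, $\bigoplus_{\{i,j,k\}}S/J(ijk)$, and $S/J(0123)$, the last differential being surjective.

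For the ``only if'' direction, suppose $(A_3,\m)$ is free. Then $D:=D(A_3,\m)$ is a free $S$-module by definition, and by Theorem~\ref{thm:free1} we have $H^i(\cR/\cJ[K_4])=0$ for $i>0$, so the sequence
\[
0\to D\to S^4\to \bigoplus_{\{i,j\}}\frac{S}{\langle\alpha_{ij}^{m_{ij}}\rangle}\to \bigoplus_{\{i,j,k\}}\frac{S}{J(ijk)}\to \frac{S}{J(0123)}\to 0
\]
is exact. In it, $S^4$ is free, $\bigoplus_{\{i,j\}}S/\langle\alpha_{ij}^{m_{ij}}\rangle$ has projective dimension one, and $\bigoplus_{\{i,j,k\}}S/J(ijk)$ has projective dimension two, the last because Lemma~\ref{lem:TriangleSyzygies} exhibits each $\syz(J(ijk))$ as free of rank two. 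I would finish by splicing. If $Z$ denotes the image of the map $S^4\to\bigoplus_{\{i,j\}}S/\langle\alpha_{ij}^{m_{ij}}\rangle$, then $Z$ has $D$ as its kernel, so $0\to D\to S^4\to Z\to 0$ is already a free resolution of $Z$; this is where freeness of $D$ enters. Iterating the mapping-cone construction against fixed free resolutions of $\bigoplus_{\{i,j\}}S/\langle\alpha_{ij}^{m_{ij}}\rangle$ (length one) and of $\bigoplus_{\{i,j,k\}}S/J(ijk)$ (length two, with the graded shifts of Lemma~\ref{lem:TriangleSyzygies}) then yields a length-three free resolution
\[
0\to D\to F_2\to F_1\to F_0\to \frac{S}{J(0123)}\to 0
\]
whose last module is $D$, so $D(A_3,\m)$ is the third syzygy module of $S/J(0123)$ in this (necessarily non-minimal) resolution.

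For the ``if'' direction, suppose $D(A_3,\m)$ occurs as a third syzygy module in some free resolution of $S/J(0123)$. By Remark~\ref{rem:essA3} we may regard $S/J(0123)$ as a module over a polynomial ring in three variables, so $\operatorname{pd}_S(S/J(0123))\le 3$; hence the third syzygy module of $S/J(0123)$ in its minimal free resolution is free (it is the last free module of that resolution, or zero). Since any two third syzygy modules of the same module differ by a free direct summand (Schanuel's lemma), the third syzygy module in the given resolution is also free. Therefore $D(A_3,\m)$ is free, i.e., $(A_3,\m)$ is free.

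I expect the only real work to be the bookkeeping in the iterated mapping cones — tracking the graded shifts $S(-m_{ij})$ and $S(-\Omega_{ijk}-1)^{a_{ijk}}\oplus S(-\Omega_{ijk})^{2-a_{ijk}}$ coming from the local resolutions and verifying that each spliced term is genuinely free — together with spelling out why $\operatorname{pd}_S(S/J(ijk))=2$. The conceptual input, namely Theorem~\ref{thm:free1} on one side and $\operatorname{pd}_S(S/J(0123))\le 3$ on the other, is immediate, so I do not foresee a serious obstacle.
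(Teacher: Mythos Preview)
Your proposal is correct, and both directions work as you outline. The ``if'' direction is essentially the same as the paper's (the paper invokes the Hilbert Syzygy Theorem, implicitly using the three-variable reduction of Remark~\ref{rem:essA3} that you make explicit).

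The ``only if'' direction, however, follows a genuinely different route. The paper does not use the quotient complex $\cR/\cJ[K_4]$ and mapping cones. Instead it reads off a resolution directly from the diagram in Lemma~\ref{lem:H1Jpres}: the complex
\[
0\to \ker(\iota)\to \bigoplus_{\sigma\in\Delta(K_4)_2} K_\sigma \xrightarrow{\iota} \bigoplus_{\tau\in E(K_4)} S[e_\tau]\to S\to S/J(0123)\to 0
\]
already has free middle terms, since each $K_\sigma=\syz(J(\sigma))$ is a second syzygy of a codimension-two Cohen-Macaulay ideal. Freeness of $\m$ makes this exact (via Theorem~\ref{thm:FreeEquiv}), and then $\ker(\iota)\cong H^1(\cJ[K_4])$ together with $D(A_3,\m)\cong S\oplus H^1(\cJ[K_4])$ identifies $D(A_3,\m)$ as a (non-minimal) third syzygy. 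So the paper's argument is shorter --- no cones, no splicing --- at the cost of passing through $\ker(\iota)$ and tacking on the extra $S$ summand at the end. Your approach costs the mapping-cone bookkeeping but has the pleasant feature that $D(A_3,\m)$ itself (not $\ker(\iota)$) lands exactly in the top spot of the resolution, and it uses only Theorem~\ref{thm:free1} rather than the finer Lemma~\ref{lem:H1Jpres}.
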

\begin{proof}
	If $D(A_3,\m)$ is a third syzygy module, it is free by the Hilbert Syzygy Theorem.  On the other hand, suppose $D(A_3,\m)$ is free.  Let $K=\sum_{\sigma} K_\sigma$ be as in Lemma~\ref{lem:H1Jpres} and the inclusion $\iota: \bigoplus_\sigma K_{\sigma}\rightarrow V$ be as in the diagram in the proof of Lemma~\ref{lem:H1Jpres}.  Consider the chain complex
	\[
	0\rightarrow \ker(\iota) \rightarrow \bigoplus\limits_{\sigma\in \D(K_4)_2} K_\sigma \xrightarrow{\iota} \bigoplus\limits_{\tau\in E(K_4)} Se_\tau \rightarrow S \rightarrow S/J(0123) \rightarrow 0
	\]
	Since $D(A_3,\m)$ is free, the above complex is exact by Theorem~\ref{thm:FreeEquiv}.  The modules $K_\sigma$, being syzygy modules of codimension two ideals, are free modules.  The long exact sequence in homology applied to the diagram in the proof of Lemma~\ref{lem:H1Jpres} yields that $H^1(\cJ[K_4])\cong \ker(\iota)$.  Since $D(A_3,\m)\cong H^0(\cR/\cJ[K_4])\cong S\oplus H^1(\cJ[K_4])$, $D(A_3,\m)$ is a (non-minimal) third syzygy of $S/J(0123)$.
\end{proof}

\begin{remark}
While the minimal free resolution of an ideal in two variables generated by powers of linear forms is known (see~\cite{FatPoints}), there is relatively little known about minimal free resolutions of ideals generated by powers of linear forms in three variables.  See~\cite[Conjecture~6.3]{HalLinearSystems} for a conjecture on the minimal free resolution for an ideal generated by powers of seven linear forms in three variables.
\end{remark}

Using Corollary~\ref{cor:ThirdSyzygy}, we can use the result of Abe-Nuida-Numata to construct the minimal free resolution of $J(0123)$ whenever $\m$ is a free ANN multiplicity.

\begin{cor}\label{cor:mfr}
Let $G$ be a signed-eliminable graph on four vertices with signed-elimination ordering $\nu$.  Let $k,n_0,n_1,n_2,n_3$ be nonnegative integers and $\m$ be the multiplicity on $A_3$ with $m_{ij}=2k+n_i+n_j+m_G(ij)$.  Also set $N=4k+(n_0+n_1+n_2+n_3)$, and let $\Omega_{ijk}$ be as in Lemma~\ref{lem:TriangleSyzygies}.  Then the ideal $J(0123)=\langle (x_i-x_j)^{m_{ij}} | 0\le i<j\le 3\rangle$ has free resolution:
\[
0\rightarrow \bigoplus\limits_{i=1}^3 S(-N-\widetilde{deg}_i) \rightarrow \bigoplus\limits_{i,j,k}\left( S(-\Omega_{ijk})^{a_{ijk}}\oplus S(-\Omega_{ijk}-1)^{2-a_{ijk}} \right) \rightarrow \bigoplus\limits_{i,j} S(-m_{ij}) \rightarrow J(0123)
\]
Furthermore, if none of the six generators are redundant, this resolution is minimal.
\end{cor}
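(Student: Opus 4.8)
The plan is to realize the asserted resolution as the truncation of the complex built inside the proof of Corollary~\ref{cor:ThirdSyzygy}, and then to read off its graded twists from results already in hand. First, since $G$ is signed-eliminable (and the hypotheses of Theorem~\ref{thm:ANNmultiplicities} are in force), Theorem~\ref{thm:ANNmultiplicities} shows that $D(A_3,\m)$ is a free $S$-module with exponents $0$ and $N+\widetilde{\deg}_1, N+\widetilde{\deg}_2, N+\widetilde{\deg}_3$; the exponent $0$ is carried by the constant-coefficient derivation $\partial_{x_0}+\partial_{x_1}+\partial_{x_2}+\partial_{x_3}$, which spans a degree-$0$ free summand isomorphic to $S$. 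By Corollary~\ref{cor:ThirdSyzygy} and its proof, freeness of $D(A_3,\m)$ implies that the complex
\[
0\rightarrow \ker(\iota)\rightarrow \bigoplus_{\sigma\in\D(K_4)_2} K_\sigma \xrightarrow{\iota} \bigoplus_{\tau\in E(K_4)} Se_\tau \rightarrow S \rightarrow S/J(0123)\rightarrow 0
\]
is exact, where $K_\sigma=\syz(J(\sigma))$, $\iota$ is the map appearing in the diagram of Lemma~\ref{lem:H1Jpres}, and $\ker(\iota)\cong H^1(\cJ[K_4])$; moreover the graded decomposition $D(A_3,\m)\cong S\oplus H^1(\cJ[K_4])$ holds with the summand $S$ concentrated in degree $0$.

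Deleting the terms $S$ and $S/J(0123)$ leaves the exact sequence
\[
0\rightarrow \ker(\iota)\rightarrow \bigoplus_{\sigma} K_\sigma \xrightarrow{\iota} \bigoplus_{\tau} Se_\tau \rightarrow J(0123)\rightarrow 0 .
\]
Each module here is free, so this is a free resolution of $J(0123)$: the module $\bigoplus_\tau Se_\tau$ is free by definition; each $K_\sigma$ is free because $J(\sigma)$ is a codimension-two Cohen--Macaulay ideal, so Hilbert--Burch applies; and $\ker(\iota)\cong H^1(\cJ[K_4])$ is free since $D(A_3,\m)\cong S\oplus H^1(\cJ[K_4])$ is free. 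It remains to identify the graded shifts. By the grading convention of Lemma~\ref{lem:H1Jpres}, $[e_\tau]$ sits in degree $m_\tau$, so $\bigoplus_\tau Se_\tau=\bigoplus_{i<j}S(-m_{ij})$. By Lemma~\ref{lem:TriangleSyzygies}, together with Remark~\ref{rem:ExponentFudge} so that a redundancy among the three powers attached to a triangle does not change the graded Betti numbers, $K_{ijk}\cong S(-\Omega_{ijk})^{a_{ijk}}\oplus S(-\Omega_{ijk}-1)^{2-a_{ijk}}$; summing over the four triangles of $K_4$ gives the middle term of the stated resolution. Finally, since $D(A_3,\m)\cong S\oplus\ker(\iota)$ as graded modules with $S$ in degree $0$, the free module $\ker(\iota)$ has its three generators in the nonzero exponent degrees of $D(A_3,\m)$, that is $\ker(\iota)\cong\bigoplus_{i=1}^{3}S(-N-\widetilde{\deg}_i)$. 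This yields the resolution in the statement.

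For minimality, assume none of the six powers is redundant in $J(0123)$. Then the surjection $\bigoplus_\tau Se_\tau\to J(0123)$ is minimal. Moreover, were some $\alpha_\tau^{m_\tau}$ with $\tau\subset\sigma$ redundant in the sub-ideal $J(\sigma)$, it would lie in the ideal generated by the other two powers attached to $\sigma$, hence in the ideal generated by the remaining five powers of $J(0123)$, contradicting the hypothesis; thus the three powers attached to each $\sigma$ minimally generate $J(\sigma)$, the Hilbert--Burch presentation $K_\sigma\to\bigoplus_{\tau\subset\sigma}S(-m_\tau)$ is minimal, and consequently $\iota$ has all entries in positive degree. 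Since $J(0123)$ is extended from an ideal of the three-variable subring $\kk[x,y,z]$, where every ideal has projective dimension at most two, $\operatorname{pd}_S J(0123)\le 2$; hence the length-two free resolution above can fail to be minimal only through a common free direct summand of $\ker(\iota)$ and $\bigoplus_\sigma K_\sigma$, equivalently only if some $N+\widetilde{\deg}_i$ equals some $\Omega_{ijk}$ or $\Omega_{ijk}+1$. Excluding this is the last and only genuinely computational step: one compares the degrees $N+\widetilde{\deg}_i$ with $\Omega_{ijk}$, which is roughly $(m_{ij}+m_{ik}+m_{jk})/2$, using that $N=4k+n_0+n_1+n_2+n_3$ exceeds $\Omega_{ijk}+1$ unless $k$ and the remaining $n$-parameter are both very small, and then checks the finitely many residual configurations directly (where any coincidence of generator degrees is seen to force a redundancy, which the hypothesis rules out). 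I expect this degree bookkeeping to be the main obstacle; the rest is a formal assembly of Theorem~\ref{thm:ANNmultiplicities}, Corollary~\ref{cor:ThirdSyzygy}, Lemma~\ref{lem:TriangleSyzygies}, and Remark~\ref{rem:ExponentFudge}.
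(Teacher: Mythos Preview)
The paper gives no proof of this corollary; it is stated directly after the sentence ``Using Corollary~\ref{cor:ThirdSyzygy}, we can use the result of Abe-Nuida-Numata to construct the minimal free resolution of $J(0123)$ whenever $\m$ is a free ANN multiplicity,'' and the only follow-up is a list of three worked special cases. Your argument makes this implied derivation explicit and does so along exactly the intended route: you take the exact complex built in the proof of Corollary~\ref{cor:ThirdSyzygy}, read off the twists on $\bigoplus_\tau Se_\tau$ from the grading convention of Lemma~\ref{lem:H1Jpres}, on $\bigoplus_\sigma K_\sigma$ from Lemma~\ref{lem:TriangleSyzygies}, and on $\ker(\iota)$ from the exponents in Theorem~\ref{thm:ANNmultiplicities} via the splitting $D(A_3,\m)\cong S\oplus\ker(\iota)$.

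Two small comments. First, your appeal to Remark~\ref{rem:ExponentFudge} presupposes the twelve inequalities $m_{ij}\le m_{ik}+m_{jk}+1$, which you do not verify for ANN multiplicities; but the form of the middle term in the corollary already assumes the description of $\syz(J(ijk))$ from the first case of Lemma~\ref{lem:TriangleSyzygies}, so this is a wrinkle in the statement itself rather than in your argument. Second, on minimality you correctly reduce the question to whether some $N+\widetilde{\deg}_i$ can equal some $\Omega_{jkl}$ or $\Omega_{jkl}+1$, after showing the first two maps have no unit entries when the six powers are irredundant. You flag this degree comparison as unfinished, and that is fair: it is a genuine (if routine) calculation, and since the paper does not carry it out either, your proof is already more detailed than what the paper supplies.
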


We describe three special cases of Corollary~\ref{cor:mfr}.  If $\m$ is constant with $m_{ij}=2k$, then
\[
0\rightarrow S(-4k)^3\rightarrow S(-3k)^8 \rightarrow S(-2k)^6 \rightarrow S
\]
is a minimal free resolution for $S/J(0123)$.  If $\m$ is constant with $m_{ij}=2k+1$, then to use Corollary~\ref{cor:mfr} we take $G$ to be the complete graph on four vertices with all edges signed positively.  Then $\widetilde{deg}_2=1,\widetilde{deg}_2=2,$ and $\widetilde{deg}_3=3$.  Hence
\[
0\rightarrow S(-4k-1) \oplus S(-4k-2) \oplus S(-4k-3) \rightarrow S(-3k-1)^4\oplus S(-3k-2)^4 \rightarrow S(-2k-1)^6 \rightarrow S
\]
is a minimal free resolution for $S/J(0123)$.  Finally, suppose that $m_{ij}=n_i+n_j$ for positive integers $n_0,n_1,n_2,n_3$.  Then
\[
0\rightarrow S(-\sum n_i)^3\rightarrow \bigoplus\limits_{ijk} S(-n_i-n_j-n_k)^2 \rightarrow \bigoplus\limits_{i,j} S(-n_i-n_j) \rightarrow S
\]
is a minimal free resolution for $S/J(0123)$.

\noindent \textbf{Acknowledgments}: We are especially grateful to Takuro Abe for helpful comments and for pointing out the application to deformations of $A_3$, which we have included in Remark~\ref{rem:Abe}.  We are further grateful to Hal Schenck, Alexandra Seceleanu, Jianyun Shan, and Max Wakefield for feedback on earlier drafts of the paper.  We used Macaulay 2 \cite{M2} and Mathematica \cite{Mathematica} in our computations. Some of the computing for this project was performed at the OSU High Performance Computing Center at Oklahoma State University supported in part through the National Science Foundation grant ACI–1126330. The work in this paper was partially supported by grants from the Simons Foundation (\#199124 to Christopher Francisco and \#202115 to Jeffrey Mermin). 

\bibliography{GraphBib,SplinesBib}{}
\bibliographystyle{alpha}

\appendix

\section{Two-Valued Families}\label{app}

In this appendix we illustrate pictorially the classification of Theorem~\ref{thm:intro} for two-valued multiplicities on $A_3$.  Given two positive integers $r$ and $s$, we assume $m_{ij}=r$ or $m_{ij}=s$ for all $i,j$.  In Table~\ref{tbl:TwoValuedIllustration}, the labeling of $K_4$ in the left column shows the assignment of multiplicities and the graph on the right shows which pairs $(r,s)$ correspond to free multiplicities (the obvious patterns continue).  The hollow dots represent free multiplicities, while the solid dots represent non-free multiplicities.  If present, the vertical line of free multiplicities along $r=1$ corresponds to multiplicities with a free vertex.  Free multiplicities clustered around the diagonal correspond to free ANN multiplicities.

	\begin{longtable}{cc}
		
		\begin{tikzpicture}[scale=1.4]
		\tikzstyle{dot}=[circle,fill=black,inner sep=1 pt];
		
		\node[dot] (0) at (0,0) {};
		\node[dot] (1) at (0,2){};
		\node[dot] (2) at (2,-1){};
		\node[dot] (3) at (-2,-1){};
		
		\draw (0)--node[right]{$s$}(1)-- node[left]{$r$}(3);
		\draw (0) --node[above]{$r$} (3) -- node[below]{$r$}(2)-- node[above]{$r$} (0);
		\draw (1)--node[right]{$r$} (2);
		\end{tikzpicture}
		&
		\includegraphics[width=.4\textwidth]{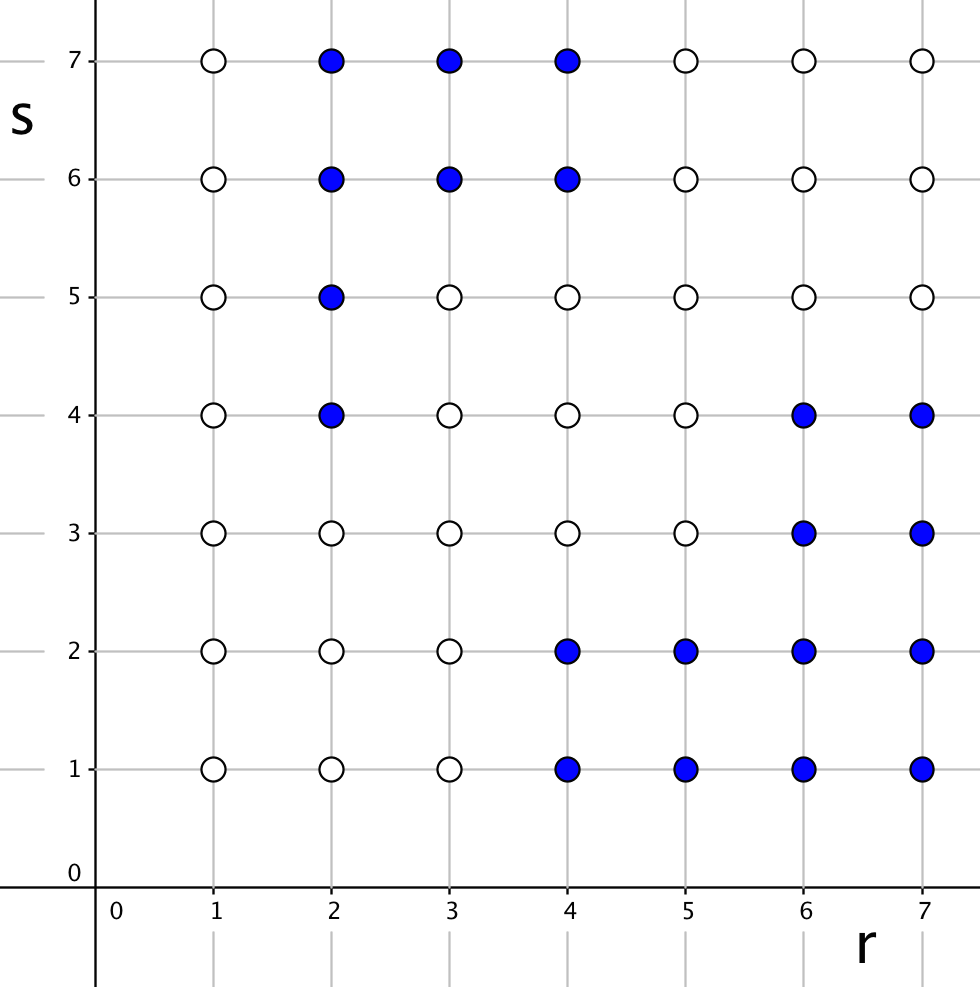}\\
		\begin{tikzpicture}[scale=1.4]
		\tikzstyle{dot}=[circle,fill=black,inner sep=1 pt];
		
		\node[dot] (0) at (0,0) {};
		\node[dot] (1) at (0,2){};
		\node[dot] (2) at (2,-1){};
		\node[dot] (3) at (-2,-1){};
		
		\draw (0)--node[right]{$s$}(1) -- node[left]{$r$}(3) ;
		\draw (0) --node[above]{$r$} (3) -- node[below]{$r$}(2)-- node[above]{$s$} (0);
		\draw (1)--node[right]{$r$} (2);
		\end{tikzpicture}
		&
		\includegraphics[width=.4\textwidth]{rands1s.png}\\
		
		\begin{tikzpicture}[scale=1.4]
		\tikzstyle{dot}=[circle,fill=black,inner sep=1 pt];
		
		\node[dot] (0) at (0,0) {};
		\node[dot] (1) at (0,2){};
		\node[dot] (2) at (2,-1){};
		\node[dot] (3) at (-2,-1){};
		
		\draw (0)--node[right]{$s$}(1)-- node[left]{$r$}(3);
		\draw (0) --node[above]{$s$} (3) -- node[below]{$r$}(2)-- node[above]{$s$} (0);
		\draw (1)--node[right]{$r$} (2);
		\end{tikzpicture}
		&
		\raisebox{50 pt}{\parbox[b]{.4\textwidth}{\centering Free for all $r,s\ge 1$}}\\
		
		\begin{tikzpicture}[scale=1.4]
		\tikzstyle{dot}=[circle,fill=black,inner sep=1 pt];
		
		\node[dot] (0) at (0,0) {};
		\node[dot] (1) at (0,2){};
		\node[dot] (2) at (2,-1){};
		\node[dot] (3) at (-2,-1){};
		
		\draw (0)--node[right]{$r$}(1) -- node[left]{$s$}(3);
		\draw (0) --node[above]{$s$} (3) -- node[below]{$r$}(2)-- node[above]{$r$} (0);
		\draw (1)--node[right]{$s$} (2);
		\end{tikzpicture}
		&
		\includegraphics[width=.4\textwidth]{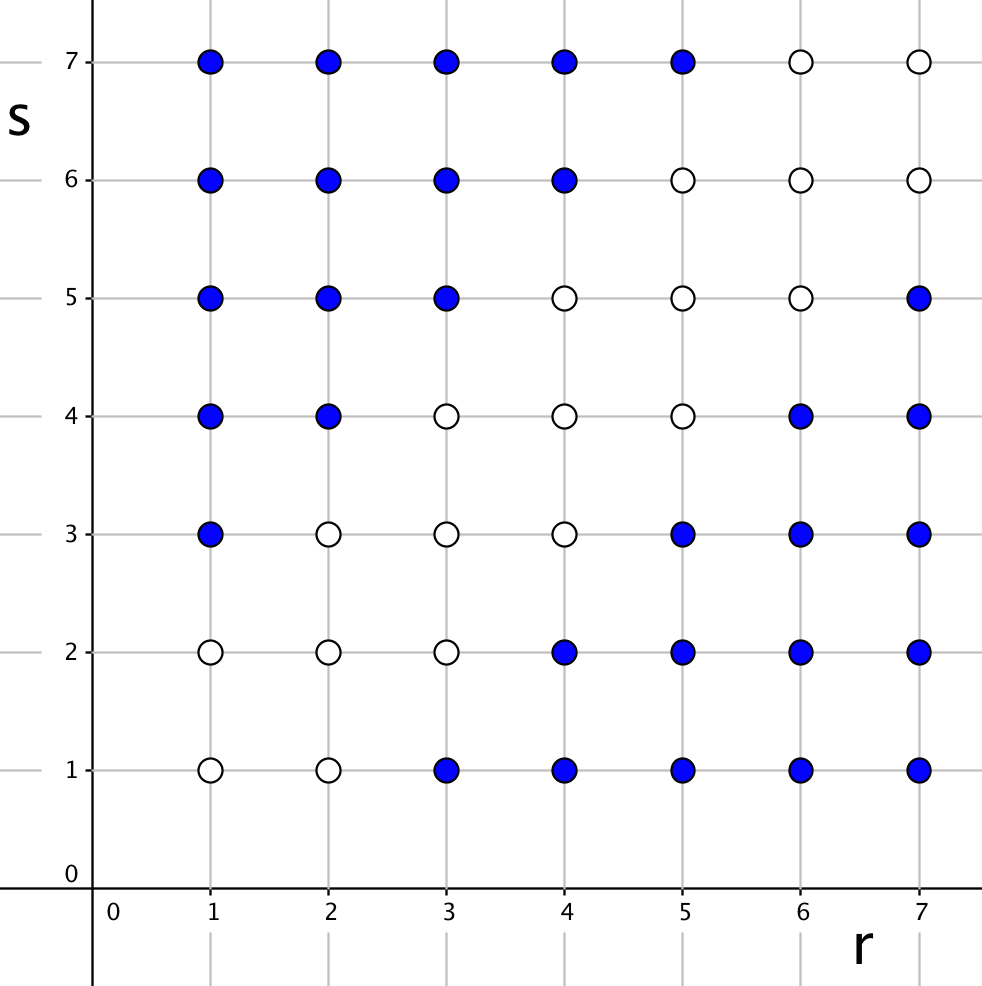}\\
		
		\begin{tikzpicture}[scale=1.4]
		\tikzstyle{dot}=[circle,fill=black,inner sep=1 pt];
		
		\node[dot] (0) at (0,0) {};
		\node[dot] (1) at (0,2){};
		\node[dot] (2) at (2,-1){};
		\node[dot] (3) at (-2,-1){};
		
		\draw (0)--node[right]{$r$}(1) -- node[left]{$r$}(3);
		\draw (0) --node[above]{$s$} (3) -- node[below]{$r$}(2)-- node[above]{$r$} (0);
		\draw (1)--node[right]{$s$} (2);
		\end{tikzpicture}
		&
		\includegraphics[width=.4\textwidth]{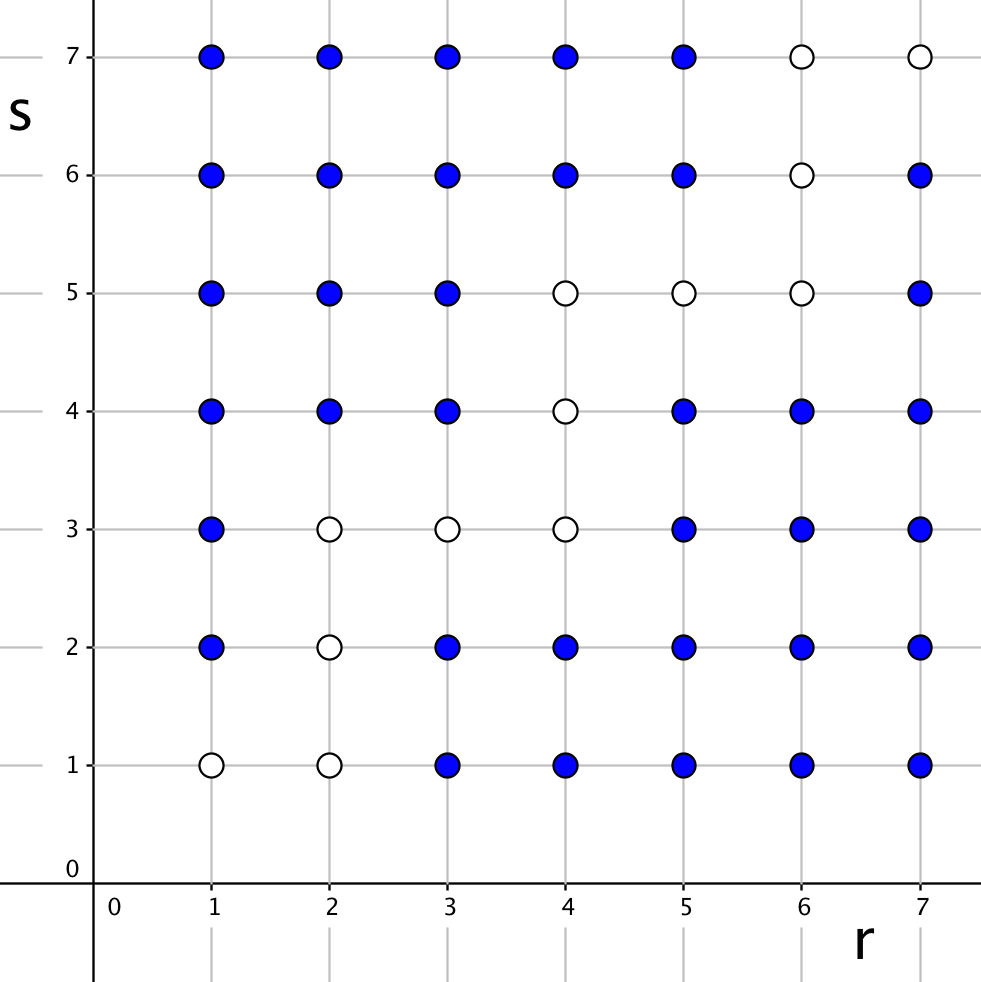}\\
		
		\caption{Free (hollow) and non-free (solid) two-valued multiplicities on $A_3$}\label{tbl:TwoValuedIllustration}
		
	\end{longtable}

\end{document}